\definecolor{darkred}{RGB}{175,0,0}
\newcommand{\K}{\mathbb{K}}
\newcommand{\R}{\mathbb{R}}
\newcommand{\N}{\mathbb{N}}
\newcommand{\Q}{\mathbb{Q}}
\newcommand{\norm}[1]{\|#1\|}
\newcommand{\comp}{\subset\subset}
\newcommand{\cont}{\mathcal{C}}
\renewcommand{\epsilon}{\varepsilon}
\renewcommand{\phi}{\varphi}
\newcommand{\Per}{\operatorname{Per}}
\newcommand{\loc}{\rm{loc}}
\renewcommand{\K}{\text{\Fontskrivan K}} 
\newcommand{\J}{\text{\Fontauri J}}
\newcommand{\F}{\text{\Fontauri F}} 
\newcommand{\G}{\text{\Fontauri G}}
\renewcommand{\H}{\text{\Fontskrivan H}}
\newcommand{\E}{\text{\Fontauri E}} 
\renewcommand{\P}{\text{\Fontauri P}}
\theoremstyle{plain}
\newtheorem{theorem}{Theorem}[section]
\newtheorem{lemma}[theorem]{Lemma}
\newtheorem{prop}[theorem]{Proposition}
\newtheorem*{prop*}{Proposition}
\newtheorem{cor}[theorem]{Corollary}
\theoremstyle{definition}
\newtheorem{definition}[theorem]{Definition}
\newtheorem*{definition*}{Definition}
\theoremstyle{remark}
\newtheorem{rem}[theorem]{Remark}
\newtheorem{ex}[theorem]{Example}
\newtheorem*{recall*}{Recall}
\numberwithin{equation}{section}
\title{$\Gamma$-convergence of the non-local Massari functional \\
	and 
	applications to inhomogeneous Allen-Cahn equations}
\date{}
\author{\href{https://research-repository.uwa.edu.au/en/persons/serena-dipierro}{Serena Dipierro}}
\author{\href{https://research-repository.uwa.edu.au/en/persons/enrico-valdinoci}{Enrico Valdinoci}}
\author{{Riccardo Villa}}
\affil{ {\footnotesize Department of Mathematics and Statistics,}\\
	{\footnotesize The University of Western Australia,} 
	{\footnotesize 35 Stirling Highway,
		Perth, WA 6009, Australia}\\
	{\footnotesize\tt serena.dipierro@uwa.edu.au},
	{\footnotesize\tt enrico.valdinoci@uwa.edu.au},
	{\footnotesize\tt riccardo.villa@research.uwa.edu.au}}
\begin{document}
	
	\pagenumbering{gobble}
	\maketitle
	
	\begin{abstract}
		We present several asymptotic results concerning the non-local Massari Problem for sets with prescribed mean curvature. In particular, we show that the fractional Massari functional~$\Gamma$-converges to the classical one, and this convergence
		preserves minimizers in the~$L^1_{\loc}$-topology.
		
		This returns useful information about the asymptotic behavior of the solutions of the inhomogeneous Allen-Cahn equation in the forced and the mass-prescribed settings. In this context, a new geometric object, which we refer to as ``non-local hybrid mean curvature'', naturally appears.
	\end{abstract}
	\setcounter{tocdepth}{1}
	\tableofcontents
	
	\pagenumbering{arabic}
	\setcounter{page}{1}
	
	\section{Introduction and main results} \label{sec::intro}
	In this paper we prove some asymptotic results involving the non-local Massari Problem for sets with prescribed mean curvature and obtain some interesting applications to the space-fractional Allen-Cahn equation. 
	
	The Massari Problem has been firstly formulated in the classical case by Umberto Massari in the mid 70s and provides a variational definition of mean curvature when the latter is a summable, but not necessarily continuous function (see e.g.~\cite{MR0355766, MR1250498}). Recently, the fractional counterpart has also been investigated in~\cite{MR5015001}.
	\medskip
	
	The mathematical details are as follows.	
	For the rest of our discussion, we denote by~$\Omega$ a bounded, Lipschitz, open set of~$\R^n$. Besides, whenever needed, all subsets of~$\R^n$ will be implicitly assumed to be Lebesgue-measurable.

	We recall that, given a function~$H\in L^1(\Omega)$, the Massari functional~$\mathscr{J}_{\Omega}^H$ introduced in~\cite{MR0355766} is defined for every set~$E$ as
	\begin{equation} \label{eq::classic_massari}
		\mathscr{J}_{\Omega}^H(E) := \Per(E,\Omega) + \frac{1}{\omega_{n-1}}\int_{\Omega\cap E} H\, dx ,
	\end{equation}
	where~$\Per$ denotes the variational perimeter, and~$\omega_{n-1}$ is the volume of the unitary ball in~$\R^{n-1}$.
	
	In the wake of~\cite{caffarelli_roquejoffre_savin_nonlocal}, for~$s\in\left(0,\frac12\right)$, we also denote the non-local interaction between a couple of disjoint sets~$A$ and~$B$ by 
	\begin{equation*}
		\mathcal{L}(A,B) := \int_A \int_B \frac{dxdy}{|x-y|^{n+2s}} ,
	\end{equation*}
	and the~$s$-Perimeter of a set~$E$ by
	\begin{equation*}
		\Per_s(E,\Omega) :=  \mathcal{L}(E\cap\Omega,E^c\cap\Omega) + \mathcal{L}(E\cap\Omega,E^c\cap\Omega^c) + \mathcal{L}(E\cap\Omega^c,E^c\cap\Omega) .
	\end{equation*}
	
	Thus, similarly to~\eqref{eq::classic_massari}, the non-local Massari functional~$\mathscr{J}_{s,\Omega}^H$ is defined in~\cite{MR5015001} for every set~$E$ as
	\begin{equation} \label{eq::fracional_massari}
		\mathscr{J}_{s,\Omega}^H(E) := \Per_s(E,\Omega) + \frac{1}{1-2s}\int_{\Omega\cap E} H\, dx .
	\end{equation}	
	
	In our study, we focus on:
	\begin{enumerate}[(i)]
		\item proving a general~$\Gamma$-convergence result for a suitably rescaled non-local Massari functional, when the fractional parameter~$s$ approaches~$(1/2)^-$;
		\item showing that the limit of a convergent sequence~$\{E_j\}_j$ of minima for~$\mathscr{J}_{s_j,\Omega}^{H_j}$ is a minimum for the limit functional;
		\item investigating the asymptotic behavior of the solutions of an inhomogeneous Allen-Cahn equation;
		\item discussing some properties of blow-down solutions of a prescribed-mass Allen-Cahn equation. In particular, in this context, we deal with a new geometric object which we refer to as a ``non-local hybrid mean curvature''.
	\end{enumerate}
		The outline of the corresponding results goes as follows.
		\smallskip
		
			\begin{figure}[!h]
				\centering
				\begin{tikzpicture}[
					node distance = 1.2cm, 
					box/.style = {
						rectangle, 
						draw, 
						text width = 6.5cm, 
						align = center, 
						inner sep = 5pt, 
						font = \small,
						minimum height = 1cm
					},
					smallbox/.style = {
						rectangle, 
						draw, 
						text width = 2cm, 
						align = center, 
						font = \small,
						inner sep = 2pt
					},
					mediumbox/.style = {
						rectangle, 
						draw, 
						text width = 5cm, 
						align = center, 
						inner sep = 5pt, 
						font = \small,
						minimum height = 1cm
					},
					arrow/.style = {
						-{Stealth[scale=1.2]}, 
						line width = 0.6pt,
						shorten > = 3pt, 
						shorten < = 3pt  
					}
					]
					
					\node (th_gamma_conv_J) [mediumbox] {$\Gamma$-convergence for the nonlocal Massari's Problem as $s \to \frac{1}{2}^-$ \\ \textbf{(Theorem~\ref{th::gamma_conv_J})}};
					\node (th_min_conv_J) [mediumbox, right = 1.5cm of th_gamma_conv_J] {Stability for minimizers of the nonlocal Massari's functional \\ \textbf{(Theorem~\ref{th::min_J_conv})}};
					
					\draw [arrow] (th_gamma_conv_J) -- (th_min_conv_J);
					
					\draw [dashed, line width=0.5pt] ($(th_gamma_conv_J.south west) + (-0.5,-0.6)$) -- ($(th_min_conv_J.south east) + (0.5,-0.6)$);
					
					\node (inhom) [box, below = 1.8cm of $(th_gamma_conv_J.south)!0.5!(th_min_conv_J.south)$] {Asymptotic behavior for an inhomogeneous Allen-Cahn Equation};
					
					\node (r2) [smallbox, right = 0.8cm of inhom] {$s = \frac{1}{2}$};
					\node (r1) [smallbox, above = 0.2cm of r2] {$s \in \left(0, \frac{1}{2}\right)$};
					\node (r3) [smallbox, below = 0.2cm of r2] {$s \in \left(\frac{1}{2}, 1\right)$};
					
					\draw [arrow] (inhom.east) -- (r1.west);
					\draw [arrow] (inhom.east) -- (r2.west);
					\draw [arrow] (inhom.east) -- (r3.west);
					
					\node (th_gamma_conv_AC) [mediumbox, below left = 1.5cm and -2cm of inhom] {$\Gamma$-convergence for a perturbed fractional Allen-Cahn energy \\ \textbf{(Theorem~\ref{th::gamma_conv})}};
					\node (th_min_conv_AC) [mediumbox, below right =1.5cm and -2cm of inhom] {Stability for minimizers of a perturbed fractional Allen-Cahn energy \\ \textbf{(Theorem~\ref{th::min_conv})}};
					
					\draw [arrow] (inhom.south) -- (th_gamma_conv_AC.north);
					\draw [arrow] (inhom.south) -- (th_min_conv_AC.north);
					
					\coordinate (merge) at ($(th_gamma_conv_AC.south)!0.5!(th_min_conv_AC.south) + (0,-1cm)$);
					
					\node (blow) [box, below = 1.2cm of merge] {Blow-down solutions of a mass-prescribed Allen-Cahn Equation};
					
					\draw [arrow] (th_gamma_conv_AC.south) -- (merge) -- (blow.north);
					\draw [arrow] (th_min_conv_AC.south) -- (merge) -- (blow.north);
					
					\node (th_lagr) [box, below = of blow] {Energy bounds for solutions of a mass-prescribed Allen-Cahn Equation \\ and \\ convergence of the Lagrange multipliers \\ \textbf{(Theorems~\ref{th::energy_uniform_estimate} and~\ref{th::multiplier_bound})}};
					
					\node (th_perturbed_energy) [box, below = of th_lagr] {Energy limits subject to mass constraints \\ \textbf{(Theorem~\ref{th::energy_continuity})}};
					
					\node (th_conv_crit_pts) [box, below = of th_perturbed_energy] {Stability for critical points of the perturbed energy $\mathcal{G}_\epsilon$ \\ \textbf{(Theorem~\ref{th::conv_crit_points})}};
					
					\draw [arrow] (blow) -- (th_lagr);
					\draw [arrow] (th_lagr) -- (th_perturbed_energy);
					\draw [arrow] (th_perturbed_energy) -- (th_conv_crit_pts);
					
				\end{tikzpicture}
		\end{figure}
	
	\subsection{$\Gamma$-convergence of the Massari's problem}
	The first~$\Gamma$-convergence result deals with the case~$s\to(1/2)^-$ and is contained in Theorem~\ref{th::gamma_conv_J}. This result is perhaps not surprising
	in itself, and its proof is an offspring of the methods in~\cite{ambrosio_dephilippis_martinazzi},
	but it is instrumental to establish the limit behavior of the functional under consideration
	and it paves the way to the analysis of the minimizers of the non-local Massari Problem.
	
	This analysis is started by Theorem~\ref{th::min_J_conv}, in which we show that the limit
	of minimizers is a minimizer of the limit functional, with uniform energy bounds. The methods of~\cite{ambrosio_dephilippis_martinazzi} are helpful in the proof of this result too, but
	here one also needs some tailor-made, and somewhat delicate, estimates for 
	the new mean curvature contributions.
	
	\subsection{A perturbed inhomogeneous non-local Allen-Cahn equation}
	Then, we turn our attention to a non-local Allen-Cahn equation
	with a ``highly oscillatory'' forcing term
	and we detect its homogenized behavior in the limit.
	This setting can be seen as the phase transition counterpart of the Massari Problem for a small singular perturbation parameter~$\varepsilon$.
	We establish a~$\Gamma$-convergence result	as~$\varepsilon\to0^+$ in Theorem~\ref{th::gamma_conv}
	and the corresponding stability of minimizers in Theorem~\ref{th::min_conv}.
	
	As a matter of fact, Theorem~\ref{th::gamma_conv} is again perhaps not surprising in itself
	and its proof heavily relies on~\cite{savin_valdinoci_gamma_conv}, but it serves
	to establish the appropriate limit functional. The proof of Theorem~\ref{th::min_conv} is instead much more delicate
	and requires specific pointwise and energy estimates of uniform type. 
	For this, we will treat the cases $s\in\left(0,\frac{1}{2}\right)$,~$s=\frac{1}{2}$, and~$s\in\left(\frac{1}{2},1\right)$ separately.
	Also, the regime~$s\in\left[\frac{1}{2},\,1\right)$ will require additional effort since non-local contributions do localize in the limit.
	
	\subsection{A prescribed-mass Allen-Cahn problem}
	We also consider problems with mass constraints.
	The corresponding~$\Gamma$-convergence result for singularly perturbed non-local phase transition model is provided in Theorem~\ref{th::gamma_conv_mass_prescribed}.
	The delicate part of this result consists in the analysis of the recovery sequence
	when~$s\in\left[\frac{1}{2},\,1\right)$, since in this case the existing literature falls short.
	Specifically, the recovery sequence constructed in~\cite{savin_valdinoci_gamma_conv} accounted for the Dirichlet external condition and instead, in our case, 
	suitable adjustments have to be performed to satisfy the mass condition: the impact
	of these modifications in the ``$\limsup$-inequality'' needs to be carefully controlled
	and this makes the proof somehow involved.
	
	As a byproduct of Theorem~\ref{th::gamma_conv_mass_prescribed}, we obtain a
	uniform energy bound for the non-local phase transition energy under the mass constraint,
	which is stated explicitly in Theorem~\ref{th::energy_uniform_estimate}.
	The miminizers of this problem fulfill a non-local external condition of Neumann type,
	as detailed in Theorem~\ref{th::u_def_outside_omega}.
	
	Interestingly, critical points of the mass constrained
	problem satisfy a non-local phase transition equation with a Lagrange multiplier.
	This additional term also depends on the singular perturbation parameter~$\varepsilon$
	and it is therefore natural to investigate its asymptotic as~$\varepsilon\to0^+$. This problem is
	addressed in Theorem~\ref{th::multiplier_bound}, whose proof requires
	appropriate uniform estimates for this scope.
	
	The convergence of these Lagrange multipliers also produces
	a limit functional, and one would like to relate the corresponding energies in the limit.
	This question is dealt with in Theorem~\ref{th::energy_continuity},
	where it is shown that minimizers for the~$\varepsilon$-perturbed functional
	approach globally (and not only locally, thanks to a non-local Neumann condition)
	a limit functional, and that the minimal energy converges to the energy of this limit function.
	
	A nontrivial aspect of the asymptotics in the problems with mass prescription
	is that, while the large interactions provided by the double-well potential force
	the minimizers to attain ``pure phases'' in the domain of reference,
	outside this domain, minimizers can still oscillate and do not reduce
	to piecewise constant functions. 
	
	This is an interesting feature, showcased in
	Theorem~\ref{th::conv_crit_points}, giving rise to a new geometric object,
	which we name ``non-local hybrid mean curvature''.
	In a nutshell, while classical problems of phase transitions with a mass constraint
	are related to constant mean curvature limit configurations,
	in our setting the limit consists of a couple, given by
	a set inside the domain and of a function outside. The non-local hybrid mean curvature
	corresponds to a singular kernel integral involving this couple;
	limit configurations are related here to the
	constancy of the non-local hybrid mean curvature.
	
	The proof of Theorem~\ref{th::conv_crit_points} relies on a subtle refinement of the results in~\cite{MR3900821}, which are not directly applicable in our framework due to a lack of global regularity.
	We believe that this improvement is of independent interest and is formalized in Theorem~\ref{th::millot_sire_wang}.
	\medskip
	
	Let us now dive into the technical details of our results.
	
	\section{Main results} \label{sec::main_results}
	
	\subsection{$\Gamma$-convergence for the Massari Problem}
	The notion of~$\Gamma$-convergence has been introduced by De Giorgi as an alternative to the convergence in the sense of Kuratowski ($K$-convergence). This has been an intense field of research in the recent years, finding many interesting applications in the fields of partial differential equations and mathematical physics (see for instance~\cite{modica_phase_transition1, modica_phase_transition2, homogeneization, MR2582099, savin_valdinoci_gamma_conv}).
	\medskip
	
	We recall the following classical definition of~$\Gamma$-convergence (see~\cite[Proposition~8.1]{dal_maso}).
	\begin{definition} \label{def::gamma_conv}
		Let~$V$ be a topological space that satisfies the first axiom of countability. Consider a sequence of functions~$\{F_h:V\to\overline{\R}\}_h$, and a function~$F:V\to\overline{\R}$.
		
		We say that~$F_h$ $\Gamma$-converges to~$F$, and we write~$F_h\xrightarrow{\Gamma}F$, if the following properties hold:
		\begin{enumerate}[(i)]
			\item\label{enum::gamma_conv_inf} for every~$x\in V$ and for every sequence~$\{x_h\}_h$ in~$V$ converging to~$x$, it holds that
			$$ \liminf_{h\to+\infty} F_h(x_h) \geq F(x)\ ;$$
			\item\label{enum::gamma_conv_sup} for every~$x\in V$ there exists a sequence~$\{x_h\}_h\subset V$ such that~$x_h\to x$ and
			$$ \limsup_{h\to+\infty} F_h(x_h) \leq F(x)\ .$$
		\end{enumerate}
	\end{definition}
	
	We also recall from~\cite{MR5015001} that a solution of the non-local Massari Problem is a set~$E$ satisfying
	\begin{equation}\label{eq::Sdef1}
		\mathscr{J}_{s,\Omega}^H (E) \leq \mathscr{J}_{s,\Omega}^H (F) \text{ for every~$F$ such that }F\setminus\Omega=E\setminus\Omega.
	\end{equation}
	
	Moreover, the following definition of non-local almost minimizers has been given in~\cite[Definition~1.1]{MR5015001}:
	
	\begin{definition}\label{def11}(Non-local almost minimizers). Given a parameter~$\Lambda\ge 0$, we say that a set~$E$
		is a~$\Lambda$-minimal set (or is an almost minimal set with respect to~$\Lambda$) for the~$s$-Perimeter in~$\Omega$ if
		$$  \Per_s(E,\Omega)\le \Per_s(F,\Omega) + \Lambda|E\Delta F|,$$
		for every set~$F$ such that~$E\Delta F\subset\Omega$.
	\end{definition}
	
	We have that, when~$H\in L^\infty(\R^n)$, 
	a solution~$E$ of~\eqref{eq::Sdef1} is a~$\Lambda$-minimal set, with~$\Lambda:=\norm{H}_{L^\infty(\Omega)}$. Hence, $\partial E$ is a~$\cont^{1,\alpha}$-hypersurface up to a set of Hausdorff dimension~$n-3$, for every~$\alpha\in(0,s)$, and has variational non-local mean curvature~$H$ in~$\Omega$ (see~\cite[Section~7]{MR5015001} for the details).
	\smallskip
	
	Important~$\Gamma$-convergence results in the setting of minimal surfaces have been developed in~\cite{ambrosio_dephilippis_martinazzi}, where it is also proved that a suitable rescaling of the~$s$-Perimeter~$\Gamma$-converges to the classical one, as~$s\to(1/2)^-$, that a convergent sequence of~$s$-minimal surfaces has uniformly bounded energy,
	and that its limit is a minimal surface in the classical sense
	(see~\cite{MR3586796, MR1942130, MR2782803}
	for related results).
	
	We show here that analogous results hold also for the Massari Problem:
	
	\begin{theorem}[$\Gamma$-convergence of the non-local Massari Problem]
		\label{th::gamma_conv_J}
		Let~$\{s_j\}_j\subset \left(0,\frac12\right)$ be a sequence such that~$s_j\to(1/2)^-$. Let~$\{H_j\}_j\subset L^\infty(\Omega)$ be
		such that~$\sup_j\norm{H_j}_{L^\infty(\Omega)}<+\infty$, and~$H_j\to H$ in~$L^1(\Omega)$, for some function~$H$.
		
		Then, for every set~$E\subseteq\R^n$, we have that:
		\begin{enumerate}[(i)]
			\item for every sequence of sets~$\{E_j\}_j$ such that~$E_j\to E$ in~$L^1_{\loc}(\R^n)$, it holds that
			$$ \omega_{n-1}\mathscr{J}_{\Omega}^H(E) \leq \liminf_{j\to+\infty}(1-2s_j) \mathscr{J}_{s_j,\Omega}^{H_j}(E_j) ,$$
			\item there exists a sequence of sets~$\{E_j\}_j$ such that~$E_j\to E$ in~$L^1_{\loc}(\R^n)$ and
			$$ \omega_{n-1}\mathscr{J}_{\Omega}^H(E) \geq \limsup_{j\to+\infty}(1-2s_j) \mathscr{J}_{s_j,\Omega}^{H_j}(E_j) .$$
		\end{enumerate}
	\end{theorem}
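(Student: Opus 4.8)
The plan is to regard the rescaled functional $(1-2s)\mathscr{J}_{s,\Omega}^{H}$ as the sum of the rescaled $s$-perimeter and a ``curvature perturbation'', and to combine the $\Gamma$-convergence of the first piece, which is already available, with the continuity of the second. Multiplying \eqref{eq::fracional_massari} by $(1-2s_j)$ gives
\[
(1-2s_j)\,\mathscr{J}_{s_j,\Omega}^{H_j}(E_j)
= (1-2s_j)\,\Per_{s_j}(E_j,\Omega) + \int_{\Omega\cap E_j} H_j\,dx ,
\]
and we compare this with $\omega_{n-1}\mathscr{J}_\Omega^H(E) = \omega_{n-1}\Per(E,\Omega) + \int_{\Omega\cap E} H\,dx$, cf.\ \eqref{eq::classic_massari}. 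Thus it suffices to establish two facts: \emph{(a)} $(1-2s_j)\Per_{s_j}(\cdot,\Omega)$ $\Gamma$-converges, with respect to $L^1_{\loc}(\R^n)$-convergence of sets, to $\omega_{n-1}\Per(\cdot,\Omega)$ --- this is precisely the statement proved in \cite{ambrosio_dephilippis_martinazzi}, the normalizing constant $\omega_{n-1}$ being reconciled with the factor $\tfrac1{1-2s}$ in the definition of $\mathscr{J}_{s,\Omega}^H$; and \emph{(b)} the curvature term is continuous, in the sense that $E_j\to E$ in $L^1_{\loc}(\R^n)$ implies $\int_{\Omega\cap E_j} H_j\,dx\to\int_{\Omega\cap E} H\,dx$.

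To prove \emph{(b)}, I would split
\[
\int_{\Omega\cap E_j} H_j\,dx - \int_{\Omega\cap E} H\,dx
= \int_\Omega \big(\mathds{1}_{E_j}-\mathds{1}_E\big)\,H_j\,dx + \int_\Omega \mathds{1}_E\,\big(H_j-H\big)\,dx .
\]
The second term is bounded in absolute value by $\norm{H_j-H}_{L^1(\Omega)}$, which tends to $0$ by hypothesis. The first term is bounded by $\big(\sup_j\norm{H_j}_{L^\infty(\Omega)}\big)\,\big|(E_j\Delta E)\cap\Omega\big|$; since $\Omega$ is bounded, $\overline\Omega$ is compact, so $L^1_{\loc}$-convergence of $\{E_j\}_j$ to $E$ forces $|(E_j\Delta E)\cap\Omega|\to0$, and the uniform bound on $\norm{H_j}_{L^\infty(\Omega)}$ concludes.

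Granting \emph{(a)} and \emph{(b)}, both parts follow by elementary bookkeeping of $\liminf$ and $\limsup$. For \emph{(i)}, fix any $E_j\to E$ in $L^1_{\loc}(\R^n)$; since the curvature term converges by \emph{(b)},
\[
\liminf_{j\to+\infty}(1-2s_j)\,\mathscr{J}_{s_j,\Omega}^{H_j}(E_j)
= \liminf_{j\to+\infty}\big[(1-2s_j)\Per_{s_j}(E_j,\Omega)\big] + \int_{\Omega\cap E} H\,dx
\geq \omega_{n-1}\Per(E,\Omega) + \int_{\Omega\cap E} H\,dx ,
\]
where the inequality is the $\liminf$-part of \emph{(a)}; this is the desired bound. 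For \emph{(ii)}, if $\Per(E,\Omega)=+\infty$ there is nothing to prove (take $E_j:=E$), so assume $\Per(E,\Omega)<+\infty$ and let $\{E_j\}_j$ be a recovery sequence for $(1-2s_j)\Per_{s_j}(\cdot,\Omega)$ at $E$ provided by \emph{(a)}, so that $E_j\to E$ in $L^1_{\loc}(\R^n)$ and $\limsup_j(1-2s_j)\Per_{s_j}(E_j,\Omega)\leq\omega_{n-1}\Per(E,\Omega)$. Using \emph{(b)} again,
\[
\limsup_{j\to+\infty}(1-2s_j)\,\mathscr{J}_{s_j,\Omega}^{H_j}(E_j)
\leq \omega_{n-1}\Per(E,\Omega) + \int_{\Omega\cap E} H\,dx = \omega_{n-1}\mathscr{J}_\Omega^H(E) .
\]

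The substantive content of the argument is entirely borrowed from \cite{ambrosio_dephilippis_martinazzi}; the only genuinely new ingredient, the continuity in \emph{(b)}, is elementary, and the sole points requiring (mild) attention are the use of the boundedness of $\Omega$ to upgrade $L^1_{\loc}$-convergence of the sets to $L^1(\Omega)$-convergence of their indicators, and the standard fact that $\liminf$ and $\limsup$ pass through the addition of a convergent sequence. Accordingly, I do not anticipate any real obstacle here: the task is essentially to phrase the decomposition so that the $\Gamma$-convergence result of \cite{ambrosio_dephilippis_martinazzi} applies directly.
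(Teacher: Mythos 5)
Your proposal is correct and follows essentially the same route as the paper: the paper likewise splits the rescaled functional into the rescaled $s$-perimeter, handled by \cite[Theorem~2]{ambrosio_dephilippis_martinazzi}, plus the curvature term, whose continuity it establishes via the same $L^\infty$/$L^1$ splitting (its Proposition~\ref{prop::L2_cont_conv}). The only cosmetic difference is that the paper concludes by citing the abstract stability of $\Gamma$-limits under continuously convergent perturbations (\cite[Theorem~6.20]{dal_maso}) where you carry out the $\liminf$/$\limsup$ bookkeeping by hand.
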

	
	\begin{theorem}[Energy bounds and limit stability of minimizers for the non-local Massari Problem]\label{th::min_J_conv}
		Let~$\{s_j\}_j\subset\left(0,\frac12\right)$ be a sequence such that~$s_j\to(1/2)^-$, and~$\{H_j\}_j\subset L^\infty(\Omega)$ be such that~$\sup_j\norm{H_j}_{L^\infty(\Omega)}<+\infty$, and~$H_j\to H$ in~$L^1(\Omega)$.
		
		Let~$\{E_j\}_j$ be a sequence of sets such that~$E_j$ is a local minimizer for~$\mathscr{J}_{s_j,\Omega}^{H_j}$, and~$E_j\to E$ in~$L^1_{\loc}(\R^n)$, for some set~$E\subseteq\R^n$. 
		
		Then,
		\begin{equation} \label{eq::J_uniform_bound}
			\limsup_{j\to+\infty}\, (1-2s_j)\mathscr{J}_{s_j,K}^{H_j}(E_j)<+\infty ,\quad \text{for every } K\comp\Omega .
		\end{equation}
		
		Moreover, $E$ is a local minimizer for the classical Massari functional, and 
		\begin{equation} \label{eq::J_cont_conv_min}
			\lim_{j\to+\infty} (1-2s_j)\mathscr{J}_{s_j,K}^{H_j}(E_j) = \omega_{n-1}\mathscr{J}_{K}^{H}(E)\ ,
		\end{equation}
		for every~$K\comp\Omega$ such that~$\Per(E,\partial K)=0$.
	\end{theorem}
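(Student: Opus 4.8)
The plan is to mirror the strategy of~\cite{ambrosio_dephilippis_martinazzi}, adapting it to accommodate the extra curvature term. First I would establish the uniform energy bound~\eqref{eq::J_uniform_bound}. Fix $K\comp\Omega$ and pick an intermediate open set $K'$ with $K\comp K'\comp\Omega$. Since each $E_j$ is a local minimizer for $\mathscr{J}_{s_j,\Omega}^{H_j}$, I compare $E_j$ with a competitor $F_j$ that agrees with $E_j$ outside $K'$ and is chosen to have controlled $s_j$-perimeter in $K'$ (for instance a half-space or a ball, or better the recovery-sequence competitor coming from Theorem~\ref{th::gamma_conv_J}(ii) applied to $E$ on the set $K'$). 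The minimality inequality gives $\mathscr{J}_{s_j,K'}^{H_j}(E_j)\le \mathscr{J}_{s_j,K'}^{H_j}(F_j)$; multiplying by $(1-2s_j)$ and using the uniform bound $\sup_j\norm{H_j}_{L^\infty}<+\infty$ together with $|K'|<+\infty$, the curvature terms on both sides are bounded by $C(1-2s_j)|K'|\to 0$, so $\limsup_j (1-2s_j)\Per_{s_j}(E_j,K')<+\infty$, which controls $(1-2s_j)\mathscr{J}_{s_j,K}^{H_j}(E_j)$. The only subtlety is producing a competitor $F_j$ whose rescaled $s_j$-perimeter stays bounded uniformly in $j$; the $\limsup$-inequality of Theorem~\ref{th::gamma_conv_J} delivers exactly such a sequence after a standard cut-and-paste localization argument, being careful that the pasting interface contributes a negligible amount thanks to $\Per_s$ being a ``localized'' functional up to lower-order interactions.

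Next I would show $E$ is a local minimizer of the classical Massari functional. Take any competitor $F$ with $F\Delta E\comp\Omega$, supported in some $K\comp\Omega$. Using Theorem~\ref{th::gamma_conv_J}(ii) I obtain a recovery sequence $F_j\to F$ in $L^1_{\loc}$ with $\limsup_j (1-2s_j)\mathscr{J}_{s_j,\Omega}^{H_j}(F_j)\le \omega_{n-1}\mathscr{J}_{\Omega}^{H}(F)$; I can arrange $F_j\Delta E_j\comp\Omega$ by gluing $F_j$ to $E_j$ outside $K$ (again a localized modification whose cost vanishes after rescaling). By local minimality of $E_j$,
\begin{equation*}
(1-2s_j)\mathscr{J}_{s_j,\Omega}^{H_j}(E_j)\le (1-2s_j)\mathscr{J}_{s_j,\Omega}^{H_j}(F_j).
\end{equation*}
Taking $\liminf$ on the left, using Theorem~\ref{th::gamma_conv_J}(i) (the $\liminf$-inequality) with $E_j\to E$, and $\limsup$ on the right, I get $\omega_{n-1}\mathscr{J}_{\Omega}^{H}(E)\le\omega_{n-1}\mathscr{J}_{\Omega}^{H}(F)$, i.e.\ local minimality of $E$. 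Here the delicate point, flagged in the introduction, is the treatment of the curvature contributions: one needs that the integral terms $\frac{1-2s_j}{1-2s_j}\int_{E_j\cap\Omega}H_j\,dx=\int_{E_j\cap\Omega}H_j\,dx$ converge to $\int_{E\cap\Omega}H\,dx$, which follows from $E_j\to E$ in $L^1_{\loc}$ and $H_j\to H$ in $L^1(\Omega)$ (split $\int H_j\mathbf 1_{E_j}-\int H\mathbf 1_E$ into $\int(H_j-H)\mathbf 1_{E_j}$, bounded by $\norm{H_j-H}_{L^1}$, plus $\int H(\mathbf 1_{E_j}-\mathbf 1_E)$, handled by dominated convergence after restricting to $K$).

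Finally, for the energy convergence~\eqref{eq::J_cont_conv_min}, the $\liminf$-inequality already gives $\omega_{n-1}\mathscr{J}_K^H(E)\le\liminf_j(1-2s_j)\mathscr{J}_{s_j,K}^{H_j}(E_j)$; it remains to prove the matching $\limsup$ bound. This is where the hypothesis $\Per(E,\partial K)=0$ enters: it guarantees no mass of the limit interface concentrates on $\partial K$, so the localized energies do not lose contributions in the limit. I would argue by taking the recovery sequence for $E$ from Theorem~\ref{th::gamma_conv_J}(ii), gluing it to $E_j$ outside $K$ to build a competitor for $E_j$'s local minimality on $K$, deducing $(1-2s_j)\mathscr{J}_{s_j,K}^{H_j}(E_j)\le(1-2s_j)\mathscr{J}_{s_j,K}^{H_j}(\text{recovery})+o(1)$, and then passing to the $\limsup$, using $\Per(E,\partial K)=0$ to ensure the recovery sequence's rescaled energy on $K$ converges to $\omega_{n-1}\mathscr{J}_K^H(E)$ rather than to the energy on a strictly larger set. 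The main obstacle throughout is the bookkeeping of the nonlocal interaction terms across the gluing interface $\partial K$: unlike the classical perimeter, $\Per_{s_j}$ sees long-range interactions, so one must verify that the cross terms $\mathcal L(E_j\Delta F_j,\cdot)$ introduced by the modification contribute $o\big((1-2s_j)^{-1}\big)$, which is precisely the ``tailor-made and somewhat delicate'' estimate the introduction warns about; I expect this to require a careful splitting of the interaction kernel near and far from $\partial K$, exploiting $|K|<\infty$, the Lipschitz regularity of $\partial K$ (or of a slightly fattened $K$), and the factor $(1-2s_j)\to 0$.
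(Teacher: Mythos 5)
Your strategy for the local minimality of $E$ and for the energy convergence \eqref{eq::J_cont_conv_min} matches the paper's: a recovery sequence for the competitor $F$ (from \cite[Theorem~2]{ambrosio_dephilippis_martinazzi}) glued to $E_j$ across an annulus around $K$, the minimality of $E_j$, the $\liminf$-inequality of Theorem~\ref{th::gamma_conv_J}, and the convergence $\int_{E_j\cap\Omega}H_j\,dx\to\int_{E\cap\Omega}H\,dx$ obtained from the uniform $L^\infty$ bound on $H_j$ together with $H_j\to H$ in $L^1(\Omega)$ and $\chi_{E_j}\to\chi_E$ in $L^1(\Omega)$. Your reading of the role of the hypothesis $\Per(E,\partial K)=0$ is also correct.

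The genuine gap is in your plan for the uniform energy bound \eqref{eq::J_uniform_bound}. You propose to build the comparison set by gluing a recovery sequence to $E_j$ across $\partial K'$ and to dismiss the pasting interface as negligible. However, the cut-and-paste lemma for the fractional perimeter (\cite[Proposition~11]{ambrosio_dephilippis_martinazzi}) produces an error term of the form $\mathcal{L}(E_j\cap A,E_j^c\cap A)$, i.e.\ the self-interaction energy of $E_j$ itself on an annulus $A$ around the gluing interface; controlling this quantity (after rescaling by $1-2s_j$) is precisely what you are trying to establish at that stage, so the argument as written is circular. (The alternative ``half-space or ball'' competitors do not agree with $E_j$ outside $K'$, so they are not admissible without the same gluing.) The paper avoids this by using the ``emptying'' competitor $F_j:=E_j\cap(\Omega^c\cup\Omega_{-\delta})$, where $\Omega_{-\delta}$ is a $\delta$-collar of $\partial\Omega$ inside $\Omega$: no gluing lemma is needed, because the excess of $\Per_{s_j}(F_j,\Omega)$ over the interior part of the energy of $E_j$ is bounded by the purely geometric interactions $\mathcal{L}(\Omega_{-\delta},\Omega\setminus\Omega_{-\delta})$ and $2\mathcal{L}(\Omega,\Omega^c)$, which are independent of $E_j$ and, after multiplication by $1-2s_j$, are controlled by classical perimeters; the curvature term is absorbed by recalling that a Massari minimizer is a $\Lambda_j$-minimal set with $\Lambda_j=\norm{H_j}_{L^\infty(\Omega)}$. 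Only once \eqref{eq::J_uniform_bound} is in hand can one invoke the weak convergence of the localized energies $\pi_j(\cdot):=(1-2s_j)\,\mathcal{L}(E_j\cap\,\cdot\,,E_j^c\cap\,\cdot\,)$ (\cite[Theorem~21 and Proposition~22]{ambrosio_dephilippis_martinazzi}) to show that the annulus error terms in the gluing lemma vanish, which is what makes your remaining two steps go through.
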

	
	We point out that the factor~$1-2s$ in Theorems~\ref{th::gamma_conv_J} and~\ref{th::min_J_conv} is the appropriate scaling factor to obtain interesting~$\Gamma$-limit properties.
	
	The proofs of Theorems~\ref{th::gamma_conv_J} and~\ref{th::min_J_conv} will be presented in Sections~\ref{sec::proof_gamma_conv_J} and~\ref{sec::proof_min_J_conv} respectively. 
	
	\subsection{An alternative formulation for the Massari Problem}
	
	Since we are interested in the~$\Gamma$-convergence for inhomogeneous Allen-Cahn equations, we recall that the limit of a convergent sequence of minimizers for the Allen-Cahn energy functional takes the form~$\chi_E-\chi_{E^c}$, for some set~$E$ (see~\cite[Theorem~1.3]{savin_valdinoci_gamma_conv}). As an analogous result holds true in our setting, in order to incorporate the extra term due to the term~$-\chi_{E^c}$ in the~$\Gamma$-limit, we now propose an alternative formulation of the Massari Problem to the one presented in~\eqref{eq::fracional_massari}.
	
	With this aim, notice that, by definition of~$s$-Perimeter, for every set~$E$, we have
	\begin{equation*}
		\Per_s(E,\Omega) = \Per_s(E^c,\Omega),
	\end{equation*}
	hence
	\begin{equation*}
		\mathscr{J}_{s,\Omega}^H (E) - \frac{1}{1-2s}\int_\Omega Hdx = \Per_s(E^c,\Omega)-\frac{1}{1-2s}\int_{\Omega\cap E^c} Hdx = \mathscr{J}_{s,\Omega}^{-H} (E^c).
	\end{equation*} 
	This justifies the intuition that a set~$E$ has non-local mean-curvature~$H$ if and only if~$E^c$ has non-local mean-curvature~$-H$.
	
	Moreover, we deduce that~$E$ is a solution of the non-local Massari Problem according to~\eqref{eq::Sdef1} if and only if~$E$ is a minimizer of 
	\begin{equation}\label{gufiwyfew7965tyhieshrdef}
		\P_{s,\Omega}^H(F) := \Per_s(F,\Omega) + \frac{1}{2(1-2s)}\int_\Omega (\chi_F-\chi_{F^c})H dx ,
	\end{equation}
	among all sets~$F$ such that~$F\setminus\Omega=E\setminus\Omega$.
	
	Similarly, a set~$E$ is a solution of the classical Massari Problem if and only if it is a minimizer of
	\begin{equation}\label{gufiwyfew7965tyhieshrdef2}
		\P_{\Omega}^H(F) := \Per(F,\Omega) + \frac{1}{2\omega_{n-1}}\int_\Omega (\chi_F-\chi_{F^c})H dx ,
	\end{equation}
	among all the sets~$F$ such that~$F\setminus\Omega=E\setminus\Omega$.
	
	Recalling what discussed in the Introduction of~\cite{MR0355766} and~\cite[Definition~1.14]{MR5015001}, we have the following:
	\begin{definition} \label{def::mean_curv}
		Let~$H$ be a function in~$L^1(\Omega)$. We say that a set~$E\subseteq\R^n$ has (variational) mean curvature~$H$ if~$E$ is a minimizer of~$\P_{\Omega}^H$ among all the set which coincides with~$E$ outside~$\Omega$.
		
		Similarly, we say that a set~$E$ has non-local (variational) mean curvature~$H$ if~$E$ is a minimizer of~$\P_{s,\Omega}^H$ among all the sets which coincide with~$E$ outside~$\Omega$. 
	\end{definition}
	
	\subsection{A perturbed inhomogeneous Allen-Cahn equation}
	$\Gamma$-convergence represents a key tool in handling singularly perturbed energies. One of the main examples of the latter comes from the gradient theory of phase transitions (see~\cite{modica_mortola, modica_phase_transition1, MR870014}), where the total energy is given by a ``double-well potential''~$W$ with two isolated minima plus a gradient term that prevents the formation of unnecessary interfaces between the two phases. Such energy is related to the so-called Allen-Cahn equation and takes the form
	$$ \int_\Omega \left[\frac{\epsilon^2}{2}|\nabla u|^2 + W(u)\right]dx,\quad \text{with }\epsilon\to0^+.$$
	
	In this framework, the assumptions that we take on the potential~$W$ are the following: we suppose that~$W:\R\to[0,+\infty)$ satisfies
	\begin{equation}\label{eq::assW}
		\begin{split}
			&W\in C^2(\R), \quad W(\pm1)=0,\quad W>0 \;{\mbox{ in }} \R\setminus\{\pm1\},\\
			& W'(\pm1)=0\quad {\mbox{ and }}\quad W''(\pm1)>0.
		\end{split}
	\end{equation}
	For the sake of concreteness, one can keep in mind the model case~$W(t):=\frac{1}{4}(1-t^2)^2$.
	
	A non-local analog of the Allen-Cahn energy functional, where the gradient term is replaced by a fractional Gagliardo semi-norm, has been addressed in the recent literature (see~\cite{savin_valdinoci_gamma_conv,savin_valdinoci_density_estimates}). 
	
	In this paper, we generalize this non-local setting taking into account a macroscopic forcing term~$H$ and proving a~$\Gamma$-convergence result for the new perturbed energy. Then, we show that~$H$ is suitably related to the variational mean curvature of the minimizers of the~$\Gamma$-limit energy functional.
	
	For our purposes, we adopt some notation introduced in~\cite{savin_valdinoci_gamma_conv, savin_valdinoci_density_estimates}. Given~$M\in(1,+\infty)$, we define
	\begin{equation} \label{eq::def_X_M}
		X_M := \big\{u\in L^1_{\rm{loc}}(\R^n) \text{ s.t. } \norm{u}_{L^\infty(\Omega) } \leq M \big\} ,
	\end{equation}
	which is a Fr\'echet space when endowed with the family of semi-norms~$\{\norm{\cdot}_{L^1(B_r)}\}_{r\in\Q}$.
	
	For every function~$u\in X_M$ and for every couple of sets~$A$, $B\subseteq\R^n$, let
	\begin{equation*}
		u(A,B) := \int_A\int_B \frac{|u(x)-u(y)|^2}{|x-y|^{n+2s}} dxdy .
	\end{equation*}
	Then, we define the functional
	\begin{equation}\label{eq::def_K}
		\begin{split}
			\K(u,\Omega) 
			:=\, &\frac{1}{2}\int_\Omega\int_\Omega \frac{|u(x)-u(y)|^2}{|x-y|^{n+2s}}dxdy + \int_\Omega\int_{\Omega^c} \frac{|u(x)-u(y)|^2}{|x-y|^{n+2s}}dxdy \\
			=\, &\frac{1}{2}u(\Omega,\Omega) + u(\Omega,\Omega^c) ,
		\end{split}
	\end{equation}
	which accounts for the local contribution in~$\Omega$ of the~$H^s$-Gagliardo semi-norm of~$u$.
	
	With this notation, the energy functional~$\J_\epsilon:X_M\to\R\cup\{+\infty\}$ associated with the non-local Allen-Cahn equation is 
	\begin{equation*}
		\J_\epsilon(u,\Omega) := \epsilon^{2s}\K(u,\Omega) + \int_\Omega W(u)dx .
	\end{equation*}
	
	In order to obtain interesting~$\Gamma$-limits for the functional~$\J_\epsilon$ and prevent the energy to diverge when~$\epsilon\to0^+$, we introduce the rescaled energy functional~$\F_\epsilon:X_M\to\R\cup\{+\infty\}$, depending on whether~$s\in(0,1/2)$, $s=1/2$, or~$s\in(1/2,1)$, as
	\begin{equation} \label{eq::F_epsilon}
		\F_\epsilon(u,\Omega) :=
		\begin{cases}
			\epsilon^{-2s} \J_\epsilon(u,\Omega) \quad&\text{if }s\in\left(0,\frac{1}{2}\right), \\
			|\epsilon\log\epsilon|^{-1}\J_\epsilon(u,\Omega) \quad&\text{if }s=\frac{1}{2}, \\
			\epsilon^{-1}\J_\epsilon(u,\Omega) \quad&\text{if }s\in\left(\frac{1}{2},1\right). 
		\end{cases}
	\end{equation}
	
	In~\cite[Theorem~1.2]{savin_valdinoci_gamma_conv}, the authors have shown that	$\F_\epsilon$ $\Gamma$-converges to the functional~$\F:X_M\to\R\cup\{+\infty\}$ defined as
	\begin{equation} \label{eq::F}
		\F(u,\Omega) :=
		\begin{cases}
			\K(u,\Omega) \quad &\text{if $u|_\Omega=\chi_E-\chi_{E^c}$, for some set }E\subset\Omega, \\
			&\quad\text{and $s\in\left(0,\frac{1}{2}\right)$,} \\
			c_\star\Per(E,\Omega) \quad &\text{if $u|_\Omega=\chi_E-\chi_{E^c}$, for some set }E\subset\Omega, \\
			&\quad\text{and $s\in\left[\frac{1}{2},1\right)$,} \\
			+\infty \quad &\text{otherwise,}
		\end{cases}
	\end{equation}
	where~$c_\star=c_\star(n,s,W)$ is a suitable positive constant related to a particular~$1$-dimensional minimal profile (see~\cite[Theorem~4.2 and  Formula (4.35)]{savin_valdinoci_gamma_conv}).
	
	\begin{rem} \label{rem::X_M}
		We point out that the results presented in~\cite{savin_valdinoci_gamma_conv} are stated and proved for functionals~$\F_\epsilon$ and~$\F$ defined on the space 
		$$X := \{u\in L^1_{\loc}(\R^n) \mbox{ s.t. }\norm{u}_{L^\infty(\R^n)}\leq 1\}.$$
		However, by inspection of the proofs contained there, we see that the same results can be proved with identical arguments when considering the space~$X_M$ defined in~\eqref{eq::def_X_M}, for any~$M\geq1$.
		
		In fact, the upper bound imposed in the definition of~$X$ plays no other role than being uniform (in~$\epsilon$). 
		We refer to	Appendix~\ref{append::X_M} for a more detailed and self-contained discussion of this adaptation.
	\end{rem}
	\smallskip
	
	Now, let~$g$ be a function in~$L^\infty(\R^n)$ and consider a ``forced'' non-local phase-transition model with a perturbed fractional Allen-Cahn energy functional given by
	\begin{equation*}
		\epsilon^{2s}\K(u,\Omega) + \int_\Omega \left(W(u)+g_\epsilon u\right) dx,
	\end{equation*}
	where~$g_\epsilon(x):=g(x/\epsilon)$.
	
	Then, we define the sequence~$\{H_\epsilon\}_\epsilon\subset L^\infty(\R^n)$ as
	\begin{equation*}
		H_\epsilon(x) :=
		\begin{cases}
			2\epsilon^{-2s}g_\epsilon(x) \quad&\text{if }s\in\left(0,\frac{1}{2}\right),\\
			2|\epsilon\log(\epsilon)|^{-1}g_\epsilon(x) \quad&\text{if }s=\frac{1}{2},\\
			2\epsilon^{-1}g_\epsilon(x) \quad&\text{if }s\in\left(\frac{1}{2},1\right).
		\end{cases}
	\end{equation*}
	Furthermore, assume that~
	\begin{equation} \label{eq::H_conv_assumption}
		\lim_{\epsilon\to0} \norm{H_\epsilon-H}_{L^1(\Omega)} = 0,
	\end{equation}
	for some macroscopic forcing term~$H\in L^\infty(\R^n)$. 
	
	We consider the forcing term energy contributions~$\H_\epsilon$, $\H:X_M\to\R$ given by
	\begin{eqnarray*}
		\H_\epsilon(u,\Omega) &:=& c_{n,s} \int_\Omega H_\epsilon u\, dx ,\\
		{\mbox{and }}\quad 
		\H(u,\Omega)& :=& c_{n,s}\int_\Omega H\,u\, dx ,
	\end{eqnarray*}
	where
	$$ c_{n,s}:=\begin{cases}
		\displaystyle\frac1{2(1-2s)} & {\mbox{ if }} s\in\left(0,\frac12\right),\\
		\displaystyle\frac1{2\omega_{n-1}} & {\mbox{ if }} s\in\left[\frac12,1\right). 
	\end{cases}$$
	
	We also define the total perturbed energies
	$$\E_\epsilon :=\F_\epsilon + \H_\epsilon\quad{\mbox{
			and}}\quad\E := \F + \H.$$
	
	With this notation, we prove the following results which extend~\cite[Theorems~1.2 and~1.3]{savin_valdinoci_gamma_conv} to the setting under consideration here.
	
	\begin{theorem}[$\Gamma$-convergence of	the perturbed fractional Allen-Cahn energy functional] \label{th::gamma_conv}
		Let~$s\in(0,1)$. Then, the sequence of functionals~$\E_\epsilon$ $\Gamma$-converges to~$\E$, i.e. for any~$u\in X_M$ we have
		\begin{enumerate}[(i)]
			\item for any sequence~$\{u_\epsilon\}_\epsilon\subset X_M$ converging to~$u$ in~$L^1_{\loc}(\R^n)$,
			$$ \E(u,\Omega) \leq \liminf_{\epsilon\to0^+} \E_\epsilon(u_\epsilon,\Omega) ,$$
			\item there exists a sequence~$\{u_\epsilon\}_\epsilon\subset X_M$ converging to~$u$ in~$L^1_{\loc}(\R^n)$ such that 
			$$ \E(u,\Omega) \geq \limsup_{\epsilon\to0^+} \E_\epsilon(u_\epsilon,\Omega) .$$
		\end{enumerate}
	\end{theorem}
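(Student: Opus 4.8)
The plan is to deduce Theorem~\ref{th::gamma_conv} by splitting the energy into its ``perimeter-type'' part~$\F_\epsilon$ and its ``forcing'' part~$\H_\epsilon$, treating the former via the known result~\cite[Theorem~1.2]{savin_valdinoci_gamma_conv} (in the form recalled above, with~$X$ replaced by~$X_M$) and the latter as a genuinely continuous perturbation. The key point is that~$\H_\epsilon$ is a continuous perturbation of~$\H$ along~$L^1_{\loc}$-convergent sequences, so that the~$\Gamma$-convergence of~$\F_\epsilon$ to~$\F$ is inherited by~$\E_\epsilon=\F_\epsilon+\H_\epsilon$. Concretely, I would first prove the auxiliary fact that, if~$u_\epsilon\to u$ in~$L^1_{\loc}(\R^n)$ with~$u_\epsilon\in X_M$, then~$\H_\epsilon(u_\epsilon,\Omega)\to\H(u,\Omega)$: indeed
\begin{equation*}
|\H_\epsilon(u_\epsilon,\Omega)-\H(u,\Omega)|\le c_{n,s}\norm{H_\epsilon-H}_{L^1(\Omega)}\norm{u_\epsilon}_{L^\infty(\Omega)}+c_{n,s}\norm{H}_{L^\infty(\Omega)}\norm{u_\epsilon-u}_{L^1(\Omega)},
\end{equation*}
and the first term tends to~$0$ by~\eqref{eq::H_conv_assumption} together with~$\norm{u_\epsilon}_{L^\infty(\Omega)}\le M$, while the second tends to~$0$ since~$\Omega$ is bounded and~$u_\epsilon\to u$ in~$L^1_{\loc}$, hence in~$L^1(\Omega)$.

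With this continuity in hand, the two inequalities of~$\Gamma$-convergence follow almost formally. For the~$\liminf$-inequality~(i), given~$u_\epsilon\to u$ in~$L^1_{\loc}(\R^n)$, I would write~$\E_\epsilon(u_\epsilon,\Omega)=\F_\epsilon(u_\epsilon,\Omega)+\H_\epsilon(u_\epsilon,\Omega)$, apply the known~$\liminf$-inequality for~$\F_\epsilon$ and use that~$\H_\epsilon(u_\epsilon,\Omega)$ converges to~$\H(u,\Omega)$ (a limit, not merely a~$\liminf$), so that
\begin{equation*}
\liminf_{\epsilon\to0^+}\E_\epsilon(u_\epsilon,\Omega)\ge\liminf_{\epsilon\to0^+}\F_\epsilon(u_\epsilon,\Omega)+\lim_{\epsilon\to0^+}\H_\epsilon(u_\epsilon,\Omega)\ge\F(u,\Omega)+\H(u,\Omega)=\E(u,\Omega).
\end{equation*}
For the~$\limsup$-inequality~(ii), I would take the recovery sequence~$\{u_\epsilon\}_\epsilon\subset X_M$ provided by~\cite[Theorem~1.2]{savin_valdinoci_gamma_conv} for~$\F_\epsilon$, which converges to~$u$ in~$L^1_{\loc}(\R^n)$ and satisfies~$\limsup_\epsilon\F_\epsilon(u_\epsilon,\Omega)\le\F(u,\Omega)$; the \emph{same} sequence works for~$\E_\epsilon$ because~$\H_\epsilon(u_\epsilon,\Omega)\to\H(u,\Omega)$, giving
\begin{equation*}
\limsup_{\epsilon\to0^+}\E_\epsilon(u_\epsilon,\Omega)\le\limsup_{\epsilon\to0^+}\F_\epsilon(u_\epsilon,\Omega)+\lim_{\epsilon\to0^+}\H_\epsilon(u_\epsilon,\Omega)\le\F(u,\Omega)+\H(u,\Omega)=\E(u,\Omega).
\end{equation*}
The case~$\F(u,\Omega)=+\infty$ is trivial for~(ii) (any sequence converging to~$u$ works) and for~(i) is automatic once~$\F_\epsilon$ satisfies its own~$\liminf$-inequality, since~$\H_\epsilon(u_\epsilon,\Omega)$ stays bounded.

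The only thing to watch, and arguably the ``main obstacle'', is making sure the cited~$\Gamma$-convergence of~$\F_\epsilon$ is available in the space~$X_M$ rather than~$X=\{\norm{u}_{L^\infty(\R^n)}\le1\}$; this is precisely the content of the Remark preceding the statement, so I would simply invoke that Remark. A secondary subtlety is that the scaling constants hidden in~$H_\epsilon$ and in~$c_{n,s}$ were chosen exactly so that~$\H_\epsilon(u,\Omega)=\frac{\epsilon^{2s}}{\cdot}\,(\text{or }|\epsilon\log\epsilon|^{-1},\ \epsilon^{-1})\cdot\int_\Omega g_\epsilon u\,dx$ matches the~$\F_\epsilon$-rescaling, so the sum~$\E_\epsilon=\F_\epsilon+\H_\epsilon$ is genuinely the rescaled forced energy; I would note this consistency but it requires no real work. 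Everything else is the soft, general-nonsense fact that~$\Gamma$-convergence is stable under addition of a continuously convergent perturbation.
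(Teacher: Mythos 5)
Your proposal is correct and follows essentially the same route as the paper: the paper also establishes that~$\H_\epsilon(u_\epsilon,\Omega)\to\H(u,\Omega)$ along~$L^1_{\loc}$-convergent sequences in~$X_M$ (Corollary~\ref{cor::H_conv}, via Proposition~\ref{prop::L2_cont_conv}), recalls the~$\Gamma$-convergence of~$\F_\epsilon$ to~$\F$ from~\cite[Theorem~1.2]{savin_valdinoci_gamma_conv}, and concludes by the stability of~$\Gamma$-convergence under continuously convergent perturbations. The only cosmetic difference is that the paper invokes~\cite[Proposition~6.20]{dal_maso} for this last abstract step, whereas you spell out the two elementary~$\liminf$/$\limsup$ inequalities by hand.
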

	
	\begin{theorem}[Limit stability of minimizers for the perturbed fractional Allen-Cahn energy functional]\label{th::min_conv}
		Let~$\{u_\epsilon\}_\epsilon\subset X_M$ be a sequence of minimizers for~$\E_\epsilon$ in~$\Omega$. Then, there exist a convergent subsequence and a set~$E\subseteq\R^n$ such that
		\begin{equation} \label{eq::u_conv}
			u_\epsilon\to u:=\chi_E-\chi_{E^c} \quad\text{in }L^1(\Omega).
		\end{equation}
		Moreover, 
		\begin{enumerate}[(i)]
			\item\label{enum::min_conv1} if~$s\in(0,1/2)$ and~$u_\epsilon$ is weak* convergent to some~$u_0$ in~$L^\infty(\Omega^c)$, then~$u$ minimizes~$\E$ among all the functions that coincide with~$u_0$ in~$\Omega^c$;
			\item\label{enum::min_conv2} if~$s\in[1/2,1)$, then~$u$ minimizes~$\E$. Also, for any~$K\comp\Omega$, it holds
			\begin{equation} \label{eq::energy_upper_bound}
				\limsup_{\epsilon\to0^+} \E_\epsilon(u_\epsilon,K) \leq c_\star\Per(E,\overline{K}) +\frac{1}{2\omega_{n-1}}\int_{K}(\chi_E-\chi_{E^c})H\, dx = c_\star\P_{\overline{K}}^{H/c_\star}(E),
			\end{equation}
			where~$c_\star$ is as in~\eqref{eq::F} and~$\P_{\overline{K}}^{ H/c_\star}$ is
			as in~\eqref{gufiwyfew7965tyhieshrdef2}.
		\end{enumerate}
	\end{theorem}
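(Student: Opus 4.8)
The plan is to combine the $\Gamma$-convergence statement of Theorem~\ref{th::gamma_conv} with uniform energy and compactness estimates, treating the two regimes $s\in(0,1/2)$ and $s\in[1/2,1)$ separately, since in the former the non-local term does not localize and the external datum genuinely enters the limit problem. The first step is to establish the compactness in~\eqref{eq::u_conv}: from the minimality of $u_\epsilon$ we compare $\F_\epsilon(u_\epsilon,\Omega)+\H_\epsilon(u_\epsilon,\Omega)$ against a competitor obtained from a fixed smooth function interpolating between $+1$ and $-1$ on a strip in $\Omega$, which yields $\sup_\epsilon\bigl(\F_\epsilon(u_\epsilon,\Omega)+\H_\epsilon(u_\epsilon,\Omega)\bigr)<+\infty$; since $\H_\epsilon(u_\epsilon,\Omega)$ is controlled by $\|H_\epsilon\|_{L^1(\Omega)}$, which is bounded by~\eqref{eq::H_conv_assumption}, and $\|u_\epsilon\|_{L^\infty(\Omega)}\le M$, this gives a uniform bound on $\F_\epsilon(u_\epsilon,\Omega)$. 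At this point I would invoke the compactness part of the Savin--Valdinoci theory (as recalled around~\cite[Theorem~1.3]{savin_valdinoci_gamma_conv}, now applied in $X_M$ via the Remark following~\eqref{eq::F}), which forces, along a subsequence, $u_\epsilon\to u$ in $L^1(\Omega)$ with $u|_\Omega=\chi_E-\chi_{E^c}$ for some set $E\subset\Omega$. One also needs the uniform $L^\infty(\R^n)$ bound $\|u_\epsilon\|_{L^\infty(\R^n)}\le M$ for minimizers, which is exactly the content of Lemma~\ref{lemma::unif_bound_min} cited earlier.

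For part~\eqref{enum::min_conv1}, assume $u_\epsilon\xrightharpoonup{*}u_0$ in $L^\infty(\Omega^c)$. The goal is to show $\E(u,\Omega)\le\E(v,\Omega)$ for every $v\in X_M$ with $v=u_0$ on $\Omega^c$. Fix such a $v$; by the $\limsup$-inequality of Theorem~\ref{th::gamma_conv}(ii) there is a recovery sequence $v_\epsilon\to v$ in $L^1_{\loc}$ with $\limsup_\epsilon\E_\epsilon(v_\epsilon,\Omega)\le\E(v,\Omega)$. The subtlety is that $v_\epsilon$ need not agree with $u_\epsilon$ outside $\Omega$, so $u_\epsilon$ is only a minimizer against competitors sharing its own exterior data; I would therefore modify $v_\epsilon$ on a shrinking neighbourhood of $\partial\Omega$ in $\Omega^c$ to match $u_\epsilon$ there, using the Fubini/slicing argument standard in this context to choose a radius at which the interaction correction across the patched annulus is negligible (here the factor $\epsilon^{2s}$ in front of $\K$, together with $s<1/2$, makes the kernel-tail contributions vanish). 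Denoting the patched competitor $\tilde v_\epsilon$, minimality gives $\E_\epsilon(u_\epsilon,\Omega)\le\E_\epsilon(\tilde v_\epsilon,\Omega)$; passing to the limit, using Theorem~\ref{th::gamma_conv}(i) on the left and the controlled $\limsup$ on the right, and identifying the weak* limit in the quadratic $\K$-term (lower semicontinuity suffices on the left, and on the right the correction terms involving $u_0$ converge precisely because of the weak* convergence) yields $\E(u,\Omega)\le\E(v,\Omega)$.

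For part~\eqref{enum::min_conv2}, when $s\in[1/2,1)$ the $\Gamma$-limit is $c_\star\Per(E,\Omega)+\H(u,\Omega)$ and exterior oscillations are killed in the limit, so the minimality of $u$ follows from the soft $\Gamma$-convergence argument (lower bound from Theorem~\ref{th::gamma_conv}(i) applied to $u_\epsilon$, upper bound from a recovery sequence for any competitor $v$, now with no exterior-matching issue because the localized limit functional only sees $\Omega$ and $\partial\Omega$) combined with the compactness already proved. The quantitative estimate~\eqref{eq::energy_upper_bound} requires more care: for $K\comp\Omega$ one applies the $\limsup$-inequality locally on a slightly larger set $K'$ with $K\comp K'\comp\Omega$, uses the localization of the non-local energy in this range to bound $\E_\epsilon(u_\epsilon,K)$ above by $\E_\epsilon$ of a recovery sequence built to equal $u_\epsilon$ outside $K'$, and then lets $K'\downarrow K$; choosing $K'$ so that $\Per(E,\partial K')=0$ (possible for a.e.\ dilation) makes the perimeter term converge to $\Per(E,\overline K)$, and the identity with $c_\star\P_{\overline K}^{H/c_\star}(E)$ is just~\eqref{gufiwyfew7965tyhieshrdef2}. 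The main obstacle throughout is the exterior-matching construction in part~\eqref{enum::min_conv1}: one must patch the recovery sequence onto the (oscillating) exterior data of $u_\epsilon$ while keeping the extra non-local interaction energy $o(1)$, and simultaneously ensure the patched sequence still converges to the intended competitor in $L^1_{\loc}$ — this is where the specific scaling $\epsilon^{2s}$ and the restriction $s<1/2$ are essential, and where a careful slicing estimate for the tails of the Gagliardo kernel has to be carried out.
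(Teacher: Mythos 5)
Your overall architecture (compactness from a uniform energy bound plus the Savin--Valdinoci compactness lemma; exterior-matched competitors for part~(i); localized recovery sequences for part~(ii)) is the right one, and the compactness step matches the paper. However, there are two genuine gaps.

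For part~(i), you propose to pass to the limit in $\E_\epsilon(u_\epsilon,\Omega)\le\E_\epsilon(\tilde v_\epsilon,\Omega)$ by ``using Theorem~\ref{th::gamma_conv}(i) on the left''. This is not legitimate: the $\Gamma$-liminf inequality requires $u_\epsilon\to u_\star$ in $L^1_{\loc}(\R^n)$, whereas here $u_\epsilon$ converges only in $L^1(\Omega)$ and weak* in $L^\infty(\Omega^c)$, so neither side of your inequality is directly amenable to the $\Gamma$-convergence statement (the right side has the same problem, since $\tilde v_\epsilon=u_\epsilon$ outside $\Omega$ does not converge to $v$ in $L^1_{\loc}$). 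The paper avoids this entirely: since $s<\tfrac12$, the competitor $v$ (a signed characteristic function in $\Omega$ glued to $u_\epsilon$ on $\Omega^c$) already has $W(v)=0$ in $\Omega$, so no recovery sequence, no shrinking neighbourhood of $\partial\Omega$, and no slicing are needed; one simply sets $v_\epsilon:=v$ in $\Omega$, $v_\epsilon:=u_\epsilon$ in $\Omega^c$, expands $\E_\epsilon(v_\epsilon,\Omega)-\E_\epsilon(u_\epsilon,\Omega)\ge 0$ exactly, observes that the purely exterior quadratic contributions cancel, and passes to the limit term by term using dominated convergence, Fatou on the $\Omega\times\Omega$ part, and weak* convergence only in the terms that are \emph{linear} in $u_\epsilon|_{\Omega^c}$. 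This explicit cancellation is the heart of the proof and is missing from your plan. Also, your justification that ``the factor $\epsilon^{2s}$ in front of $\K$ makes the kernel tails vanish'' is incorrect: in the rescaled functional $\F_\epsilon=\epsilon^{-2s}\J_\epsilon$ the Gagliardo term carries coefficient $1$; what makes the cross-boundary terms tractable for $s<\tfrac12$ is the integrability $\int_\Omega\int_{\Omega^c}|x-y|^{-n-2s}\,dx\,dy<+\infty$, not any vanishing prefactor.

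For part~(ii), you claim the minimality of $u$ follows from a ``soft'' $\Gamma$-convergence argument ``with no exterior-matching issue because the localized limit functional only sees $\Omega$''. This underestimates the problem: at the level of $\E_\epsilon$ the non-local interactions with $\Omega^c$ are still present, and $u_\epsilon$ can only be tested against competitors that agree with it outside a compact subset of $\Omega$. The paper therefore glues a recovery sequence $w_\epsilon$ for the competitor $F$ (from Propositions~4.5 and~4.6 of~\cite{savin_valdinoci_gamma_conv}) to $u_\epsilon$ across an annulus $K_\delta\setminus K$ using the interpolation result \cite[Proposition~4.1]{savin_valdinoci_gamma_conv}, which produces the key inequality~\eqref{eq::SV_prop_4_1} with an $o(\epsilon^{2s-1})$ error; both the quantitative bound~\eqref{eq::energy_upper_bound} and the minimality of $E$ for $\mathscr{P}_\Omega^{H/c_\star}$ are then extracted from this single gluing estimate (the latter by combining it with the localized liminf inequality and letting smooth sets $U_j$ exhaust $\Omega$). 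Your plan does gesture at a glued recovery sequence for the quantitative estimate, but the gluing construction is the essential step for \emph{both} conclusions in this regime, and it is precisely the part your proposal leaves unaddressed.
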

	
	We point out that if~$u_0=\chi_{E_0}-\chi_{E_0^c}$ a.e. in~$\Omega^c$, for some set~$E_0$,  then Theorem~\ref{th::min_conv}-(i) entails that, if~$s\in(0,1/2)$, $u$ minimizes
	\begin{equation*}
		\E(u,\Omega)=\K(u,\Omega)+\H(u,\Omega) =\Per_s(E,\Omega)+\frac1{2(1-2s)}\int_\Omega(\chi_E-\chi_{E^c}) Hdx=\P_{s, \Omega}^{H}(E), 
	\end{equation*}
	Namely, $u$ is a solution of the non-local Massari Problem with external datum~$E_0$	(recall the alternative formulation of the non-local Massari functional in~\eqref{gufiwyfew7965tyhieshrdef}).
	
	Similarly, if~$s\in[1/2,1)$, Theorem~\ref{th::min_conv}-(ii) gives that~$E$ is a minimizer for~$\mathscr{P}_\Omega^{H/c_\star}$, and hence it
	is a set of mean curvature~$H/c_\star$.
	
	The proof of Theorem~\ref{th::gamma_conv} is contained in Section~\ref{sec::proof_gamma_conv}. As for Theorem~\ref{th::min_conv}, we address the case~$s\in(0,1/2)$ in Section~\ref{sec::s_less_half}, whereas Section~\ref{sec::s_geq_half} is devoted to the case~$s\in[1/2,1)$.
	
	\subsection{A prescribed-mass Allen-Cahn Problem}
	The prescribed-mass Allen-Cahn Problem is an interesting example of non-linear partial differential equation subject to a constraint. This model comes from the gradient theory of phase transitions and has been deeply studied in the classical case in the 80s (see e.g.~\cite{modica_phase_transition1, modica_phase_transition2, luckhaus_modica}). 
	
	We present the fractional counterpart of the convergence results discussed there. In order to do this, let~$m\in[-|\Omega|,|\Omega|]$, and let us define the family 
	\begin{equation} \label{eq::setW}
		\begin{split}
			Z_{M,m}&:=\left\{u\in L^1_{\loc}(\R^n) \mbox{ s.t. } \norm{u}_{L^\infty(\Omega)}\leq M \mbox{, and }\int_\Omega u\,dx=m\right\} \\
			&\ = X_M\cap\left\{\int_\Omega u\,dx=m\right\}.
		\end{split}
	\end{equation}
	
	We consider the restriction of the energy functionals~$\F_\epsilon$ and~$ \F$ defined in~\eqref{eq::F_epsilon} and~\eqref{eq::F} to~$Z_{M,m}$, and address the minimization problem
	\begin{equation} \label{eq::prescribed_mass_AC}
		\text{find~$u_\epsilon\in Z_{M,m}$ such that }\F_\epsilon(u_\epsilon,\Omega) = \min\{\F_\epsilon(v,\Omega) \text{ s.t. }v\in Z_{M,m}\}.
	\end{equation}
	
	\begin{rem}
		We mention that in~\eqref{eq::prescribed_mass_AC} we do not prescribe any Dirichlet boundary condition. Indeed, this would result in an over-determined problem, for which one
		cannot expect the existence of a solution even in dimension one. We refer to Appendix~\ref{append::over_determined} for counter-examples both in the classical and in the non-local case.
	\end{rem}
	
	To start with, we claim that the sequence of functionals~$\F_\epsilon|_{Z_{M,m}}$ $\Gamma$-converges to~$\F|_{Z_{M,m}}$.
	
	\begin{theorem}[$\Gamma$-convergence of
		the fractional Allen-Cahn energy functional subject to a mass constraint]\label{th::gamma_conv_mass_prescribed}
		Let~$s\in(0,1)$. For any~$u\in Z_{M,m}$, the following holds:
		\begin{enumerate}[(i)]
			\item \label{item::liminf_mass_prescribed}for any sequence~$\{u_\epsilon\}_\epsilon\subset Z_{M,m}$ converging to~$u$ in~$L^1_{\loc}(\R^n)$, we have that
			\begin{equation*}
				\F(u,\Omega) \leq \liminf_{\epsilon\to0^+} \F_\epsilon(u_\epsilon,\Omega);
			\end{equation*}
			\item \label{item::limsup_mass_prescribed}there exists a sequence~$\{u_\epsilon\}_\epsilon\subset Z_{M,m}$ converging to~$u$ in~$L^1_{\loc}(\R^n)$ such that
			\begin{equation*}
				\F(u,\Omega) \geq \limsup_{\epsilon\to0^+} \F_\epsilon(u_\epsilon,\Omega) .
			\end{equation*}
		\end{enumerate}
	\end{theorem}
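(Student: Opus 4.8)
The plan is as follows. Part~(i) is essentially free: since $Z_{M,m}\subseteq X_M$ and the target $u$ lies in $Z_{M,m}$, any competitor sequence $\{u_\epsilon\}_\epsilon\subset Z_{M,m}$ with $u_\epsilon\to u$ in $L^1_{\loc}(\R^n)$ is in particular a sequence in $X_M$ converging to $u$, so the $\liminf$-inequality of~\cite{savin_valdinoci_gamma_conv} (in the version for $X_M$ recalled in the Remark above, noting that $\F$ and its restriction to $Z_{M,m}$ agree) applies verbatim. So I would concentrate on part~(ii). If $\F(u,\Omega)=+\infty$ there is nothing to prove, hence I may assume $u|_\Omega=\chi_E-\chi_{E^c}$ for some $E\subset\Omega$. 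When $s\in(0,1/2)$ the constant sequence $u_\epsilon:=u$ already works, because $u\in Z_{M,m}$ by hypothesis, $W(u)=0$ a.e.\ in $\Omega$, and hence $\F_\epsilon(u,\Omega)=\K(u,\Omega)=\F(u,\Omega)$ for every $\epsilon$. The substance of the proof is therefore the case $s\in[1/2,1)$, where $\F(u,\Omega)=c_\star\Per(E,\Omega)$, and the difficulty is to produce a recovery sequence which \emph{exactly} satisfies the mass constraint.

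Assume first that $m\in(-|\Omega|,|\Omega|)$, so that $0<|E\cap\Omega|<|\Omega|$ and in particular $\Per(E,B)>0$ for a suitable ball $B\comp\Omega$. I would start from a recovery sequence $\{v_\epsilon\}_\epsilon\subset X_M$ for $u$ provided by~\cite{savin_valdinoci_gamma_conv} (version for $X_M$): thus $v_\epsilon\to u$ in $L^1_{\loc}(\R^n)$, $\limsup_{\epsilon\to0^+}\F_\epsilon(v_\epsilon,\Omega)\le c_\star\Per(E,\Omega)$, and (since $\Omega$ is bounded) $v_\epsilon\to u$ in $L^1(\Omega)$, so the mass defect $\delta_\epsilon:=m-\int_\Omega v_\epsilon\,dx$ tends to $0$. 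The idea is to absorb $\delta_\epsilon$ by composing $v_\epsilon$ with a near-identity diffeomorphism supported in the fixed ball $B$, which alters neither the pointwise bound nor, asymptotically, the energy. Concretely, I would fix $\zeta\in C^\infty_c(B;\R^n)$ with $\int_B u\,\operatorname{div}\zeta\,dx=2\int_{\partial^*E\cap B}\zeta\cdot\nu_E\,d\haus{n-1}\neq0$ (possible thanks to $\Per(E,B)>0$), set $\Phi^t:=\mathrm{id}+t\zeta$ for $t$ near $0$, and consider $h_\epsilon(t):=\int_B v_\epsilon\,(|\det D\Phi^t|-1)\,dx$. Then $h_\epsilon(0)=0$, $h_\epsilon$ is smooth, $h'_\epsilon(0)=\int_B v_\epsilon\operatorname{div}\zeta\,dx\to\int_B u\operatorname{div}\zeta\,dx\neq0$, and the quadratic remainder of $h_\epsilon$ is bounded uniformly in $\epsilon$ via $\|v_\epsilon\|_{L^\infty(\Omega)}\le M$; hence for $\epsilon$ small $h_\epsilon$ maps a \emph{fixed} interval about $0$ onto a fixed interval about $0$, and by the intermediate value theorem there is $t_\epsilon\to0$ with $h_\epsilon(t_\epsilon)=\delta_\epsilon$. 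I would then take $u_\epsilon:=v_\epsilon\circ(\Phi^{t_\epsilon})^{-1}$. Since $\Phi^{t_\epsilon}$ fixes $\partial B$ and equals the identity outside $B\comp\Omega$, it maps $B$ onto $B$, $\Omega$ onto $\Omega$ and $\Omega^c$ onto $\Omega^c$; a change of variables gives $\int_\Omega u_\epsilon\,dx=\int_\Omega v_\epsilon\,dx+h_\epsilon(t_\epsilon)=m$ and $\|u_\epsilon\|_{L^\infty(\Omega)}=\|v_\epsilon\|_{L^\infty(\Omega)}\le M$, so $u_\epsilon\in Z_{M,m}$; moreover $u_\epsilon=v_\epsilon$ off $B$, while on $B$ one bounds $\|u_\epsilon-u\|_{L^1(B)}\le(1+O(t_\epsilon))\|v_\epsilon-u\|_{L^1(B)}+\|u\circ(\Phi^{t_\epsilon})^{-1}-u\|_{L^1(B)}\to0$ (the last term by $L^1$-continuity under diffeomorphisms converging to the identity), so $u_\epsilon\to u$ in $L^1_{\loc}(\R^n)$.

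For the energy I would use the change of variables once more: a near-identity diffeomorphism supported in $B\comp\Omega$ distorts both the Jacobian and the kernel $|x-y|^{-(n+2s)}$ — in the local term over $\Omega\times\Omega$ and in the non-local term over $\Omega\times\Omega^c$ (here using $\operatorname{dist}(B,\Omega^c)>0$) — by a factor $1+O(t_\epsilon)$, so that $\K(u_\epsilon,\Omega)=(1+O(t_\epsilon))\K(v_\epsilon,\Omega)$ and $\int_\Omega W(u_\epsilon)\,dx=\int_\Omega W(v_\epsilon)\,dx+O\big(t_\epsilon\int_\Omega W(v_\epsilon)\,dx\big)$. Multiplying by the rescaling factor ($\epsilon^{-1}$ if $s\in(1/2,1)$, $|\epsilon\log\epsilon|^{-1}$ if $s=1/2$) and using that $\F_\epsilon(v_\epsilon,\Omega)$ is bounded — so that $\epsilon^{2s}\K(v_\epsilon,\Omega)$ and $\int_\Omega W(v_\epsilon)\,dx$ are $O$ of the reciprocal of the rescaling factor — one obtains $\F_\epsilon(u_\epsilon,\Omega)=\F_\epsilon(v_\epsilon,\Omega)+O(t_\epsilon)$, whence $\limsup_{\epsilon\to0^+}\F_\epsilon(u_\epsilon,\Omega)\le c_\star\Per(E,\Omega)=\F(u,\Omega)$, as required. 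Finally, in the degenerate situations $m=\pm|\Omega|$ (then $u=\pm1$ a.e.\ in $\Omega$ and $\F(u,\Omega)=0$) or $\Per(E,\Omega)=0$, where there is no interface inside $\Omega$ to slide, I would simply take $u_\epsilon:=u$ when $\K(u,\Omega)<+\infty$ (so that $\F_\epsilon(u,\Omega)$ equals the rescaling factor times $\epsilon^{2s}\K(u,\Omega)\to0$), and otherwise correct the $X_M$-recovery sequence by a volume-preserving modification localized near $\partial\Omega$, whose energy contribution is negligible.

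The main obstacle, as flagged in the introduction, is precisely the uniformity required in the two previous paragraphs: first, the existence of $t_\epsilon\to0$ rests on the uniform $L^\infty$ bound for $\{v_\epsilon\}$ together with the non-degeneracy $\int_B u\operatorname{div}\zeta\neq0$, so one must check that $h_\epsilon$ covers a neighbourhood of $0$ whose size does not shrink with $\epsilon$; second, the change-of-variables estimate for the fractional Gagliardo interaction $\K(\cdot,\Omega)$ — in particular for the non-local term across $\partial\Omega$ — must produce an error that is genuinely $o(1)$ after multiplication by the rescaling factor, which \emph{diverges} when $s\in(1/2,1)$. This is exactly the point at which the recovery sequence of~\cite{savin_valdinoci_gamma_conv}, designed to match a prescribed external Dirichlet datum rather than a mass constraint, needs the adjustments alluded to in the statement.
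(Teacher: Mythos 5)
Your proposal is correct in its main line but replaces the paper's key construction with a genuinely different one, so it is worth comparing the two. For the crucial case $s\in[1/2,1)$, the paper (Lemma~\ref{lemma::mass_prescribed_recovery_seq}) restores the mass constraint \emph{additively}: it takes the explicit recovery sequence $v_\epsilon=u_0(d(\cdot)/\epsilon)$, covers $\partial E\cap\overline\Omega$ by a small union of balls $B$, adds $c_\epsilon\phi$ with $\phi\in\cont^\infty_c(\Omega\setminus\overline B)$ of unit mass and $c_\epsilon:=m-\int_\Omega v_\epsilon$, and then must prove the quantitative decay $|c_\epsilon|\le C\epsilon^s$ (via the algebraic decay of the one-dimensional profile) together with the estimates $\int_{\Omega\setminus B}W(u_\epsilon)\le C\epsilon^{2s}$, $u_\epsilon(\Omega\setminus B,\R^n)\le C$ and $|u_\epsilon(B,B^c)-v_\epsilon(B,B^c)|\le C\epsilon^s$, so that the perturbation costs $O(\epsilon^{3s-1})$ after rescaling. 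Your construction is \emph{multiplicative}: you push $v_\epsilon$ by a near-identity diffeomorphism $\Phi^{t_\epsilon}=\mathrm{id}+t_\epsilon\zeta$ supported in a ball $B\comp\Omega$ where $\Per(E,B)>0$, with $t_\epsilon$ selected by the intermediate value theorem so that the volume shift exactly cancels the mass defect. This is sound: the non-degeneracy $\int_B u\,\mathrm{div}\,\zeta\neq0$, the uniform $L^\infty$ bound, and $\delta_\epsilon\to0$ give $t_\epsilon\to0$; the change of variables distorts Jacobian and kernel by a uniform factor $1+O(t_\epsilon)$ (for the cross term one can even note that $\Phi^{t}$ is globally bi-Lipschitz with constants $1+O(t)$ and fixes $\Omega^c$), so $\F_\epsilon(u_\epsilon,\Omega)\le(1+Ct_\epsilon)\F_\epsilon(v_\epsilon,\Omega)$ and boundedness of $\F_\epsilon(v_\epsilon,\Omega)$ closes the argument. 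What your route buys is modularity: you use Lemma~\ref{lemmakgeroith9876} as a black box and never need the profile decay estimates or the support-splitting of the energy; it also preserves $\|u_\epsilon\|_{L^\infty}$ exactly. What it costs is the non-degeneracy hypothesis $\Per(E,\Omega)>0$.

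The one place your argument is genuinely incomplete is the degenerate case $\Per(E,\Omega)=0$ (which, for disconnected $\Omega$, can occur even with $m\in(-|\Omega|,|\Omega|)$): there is no interface to slide, your diffeomorphism has nothing to act on, and the fallback you sketch (``a volume-preserving modification localized near $\partial\Omega$'') is not specified enough to check, while $u_\epsilon:=u$ fails whenever $\K(u,\Omega)=+\infty$, which is the generic situation for $s\ge1/2$ since $u$ may jump across $\partial\Omega$. In that regime the natural fix is precisely the paper's additive bump supported where $v_\epsilon$ is close to $\pm1$: there the $W$-term costs $\kappa_\epsilon(\,|1-v_\epsilon|^2+c_\epsilon^2)=O(\epsilon^{4s-1}+\epsilon^{2s-1})\to0$, but this \emph{does} require the quantitative bound $|c_\epsilon|\le C\epsilon^s$ and hence the decay of $u_0$ that your main construction was designed to avoid. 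So either you should supply that estimate for the degenerate case, or observe that when $\Per(E,\Omega)=0$ the trace of $u$ on $\Omega$ is $\pm1$ on each connected component and handle it directly; as written, this branch of your proof is a gap, albeit a peripheral one.
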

	
	Moreover, we have that a sequence of minimizers~$\{u_\epsilon\}_\epsilon$ of~$\F_\epsilon$ subject to a mass constraint has uniformly bounded energy. 
		
		\begin{theorem}[Energy bounds for the fractional Allen-Cahn energy functional subject to a mass constraint] \label{th::energy_uniform_estimate}
			Let~$\{u_\epsilon\}_\epsilon\subset Z_{M,m}$ be a sequence of minimizers of~$\F_\epsilon$ in~$\Omega$.
			
			Then,
			\begin{equation} \label{eq::energy_unif_estimate}
				\F_\epsilon(u_\epsilon,\Omega)\leq C,
			\end{equation}
			for some positive constant~$C$ independent of~$\epsilon$.
		\end{theorem}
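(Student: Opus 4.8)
The plan is to play the minimality of $u_\epsilon$ in $Z_{M,m}$ against the $\limsup$-inequality of Theorem~\ref{th::gamma_conv_mass_prescribed}, evaluated at a single, explicitly chosen competitor of finite limit energy. First I would fix such a competitor $v\in Z_{M,m}$. Since the map $t\mapsto|\Omega\cap\{x_1<t\}|$ is continuous and non-decreasing from $0$ to $|\Omega|$, and $m\in[-|\Omega|,|\Omega|]$, there is $t\in\R$ with $|\Omega\cap\{x_1<t\}|=\tfrac{m+|\Omega|}{2}$. Set $E:=\Omega\cap\{x_1<t\}$ (so $E\subseteq\Omega$) and $v:=\chi_E-\chi_{E^c}$; then
\[
\int_\Omega v\,dx=|E|-|\Omega\setminus E|=2|E|-|\Omega|=m,
\]
so $v\in Z_{M,m}$ (here we use $M>1$).

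By the definition of $\F$ in~\eqref{eq::F}, one has $\F(v,\Omega)=\K(v,\Omega)$ if $s\in(0,1/2)$ and $\F(v,\Omega)=c_\star\Per(E,\Omega)$ if $s\in[1/2,1)$, and in either case $\F(v,\Omega)<+\infty$. Indeed $\Per(E,\Omega)=\mathcal{H}^{n-1}\big(\Omega\cap\{x_1=t\}\big)$ is the $(n-1)$-dimensional measure of a bounded portion of a hyperplane, hence finite; and, for $s<1/2$, $\K(v,\Omega)$ is finite because $v$ is $\pm1$-valued with a Lipschitz interface, the only possibly singular contribution of the Gagliardo kernel near the interface being integrable since $2s<1$, while the non-local tail is controlled by the boundedness of $\Omega$ (equivalently, $E$ has finite $s$-perimeter relative to $\Omega$). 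The endpoint cases $m=\pm|\Omega|$ correspond to $E\in\{\varnothing,\Omega\}$ and are covered by the same bounds.

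Next I would invoke Theorem~\ref{th::gamma_conv_mass_prescribed}-\eqref{item::limsup_mass_prescribed} applied to $u=v$: there exists $\{v_\epsilon\}_\epsilon\subset Z_{M,m}$ with $v_\epsilon\to v$ in $L^1_{\loc}(\R^n)$ and $\limsup_{\epsilon\to0^+}\F_\epsilon(v_\epsilon,\Omega)\le\F(v,\Omega)$. Since $u_\epsilon$ minimizes $\F_\epsilon(\cdot,\Omega)$ over $Z_{M,m}$ and $v_\epsilon\in Z_{M,m}$, we get $\F_\epsilon(u_\epsilon,\Omega)\le\F_\epsilon(v_\epsilon,\Omega)$, and therefore
\[
\limsup_{\epsilon\to0^+}\F_\epsilon(u_\epsilon,\Omega)\le\F(v,\Omega)<+\infty,
\]
which gives~\eqref{eq::energy_unif_estimate} for $\epsilon$ small, i.e. in the regime of interest. (When $s\in(0,1/2)$ one may in fact simply take $v_\epsilon\equiv v$, since then $W(v)\equiv0$ on $\Omega$ and $\F_\epsilon(v,\Omega)=\K(v,\Omega)$ is independent of $\epsilon$, so the bound holds for every $\epsilon$.)

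There is no substantial obstacle in this argument: the real work has already been done in Theorem~\ref{th::gamma_conv_mass_prescribed}, whose genuinely delicate point — constructing a recovery sequence that respects the mass constraint when $s\in[1/2,1)$ — is precisely what makes the present estimate available. The only thing to verify here is the existence of a mass-admissible competitor with finite limit energy, which is elementary and was carried out above.
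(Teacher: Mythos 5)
Your proposal is correct and follows essentially the same route as the paper: pick one mass-admissible competitor $v=\chi_E-\chi_{E^c}$ with $|\Omega\cap E|=\tfrac{m+|\Omega|}{2}$, apply the $\limsup$-inequality of Theorem~\ref{th::gamma_conv_mass_prescribed} to produce a recovery sequence in $Z_{M,m}$, and conclude by minimality. Your explicit choice $E=\Omega\cap\{x_1<t\}$ and the verification that $\F(v,\Omega)<+\infty$ is a small but worthwhile refinement, since the paper takes an arbitrary $E$ of the prescribed volume and leaves the finiteness of the limit energy implicit.
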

	
	We point out that the mass constraint in~\eqref{eq::setW} is closed with respect to the~$L^1_{\loc}$-convergence, hence Theorem~\ref{th::gamma_conv_mass_prescribed}-\eqref{item::liminf_mass_prescribed} can be proved with the same argument as in~\cite[Theorem~1.2-(i)]{savin_valdinoci_gamma_conv}. 
	
	For what concern Theorem~\ref{th::gamma_conv_mass_prescribed}-\eqref{item::limsup_mass_prescribed} instead, we need to verify that the recovery sequence~$\{u_\epsilon\}_{\epsilon}$ remains in~$Z_{M,m}$ for every~$\epsilon$. 
	When~$s\in(0,1/2)$, as shown in~\cite[Theorem~1.2]{savin_valdinoci_gamma_conv}, one can take the sequence~$\{u_\epsilon=u\}_\epsilon$ and obtain the desired result.
	
	On the other hand, if~$s\in[1/2,1)$, we need a suitable 
	perturbation~$\phi_\epsilon$ such that~$\phi_\epsilon\to0$ pointwise, 
	\begin{align*}
		&\F(u,\Omega) \geq \limsup_{\epsilon\to0^+} \F_\epsilon(u_\epsilon+\phi_\epsilon,\Omega) ,\\
		\text{and}\quad&\int_\Omega( u_\epsilon + \phi_\epsilon) dx = m,
	\end{align*}
	where~$\{u_\epsilon\}_\epsilon$ is the sequence constructed in~\cite[Proposition~4.6]{savin_valdinoci_gamma_conv}.
	Full details of this argument are contained in Section~\ref{sec::mass_prescribed_AC},
	while a proof of Theorem~\ref{th::energy_uniform_estimate} can be found in Section~\ref{sec::energy_uniform_estimate}.
	\medskip
		
	Now, notice that if~$u_\epsilon$ is a solution of~\eqref{eq::prescribed_mass_AC}, then~$u_\epsilon$ is also a solution of the inhomogeneous fractional\footnote{
			Here and in the rest of the paper, the operator~$(-\Delta)^s$ is the fractional Laplacian, defined, for all~$s\in(0,1)$, as
			\begin{equation*}
				(-\Delta)^su(x):=2{\mbox{P.V.}} \int_{\R^n}\frac{u(x)-u(y)}{|x-y|^{n+2s}}dy:=2\lim_{\delta\to0} \int_{\R^n\setminus B_{\delta}(x)}\frac{u(x)-u(y)}{|x-y|^{n+2s}}dy,
			\end{equation*}
			where P.V. stands for the principal value notation.
		} Allen-Cahn equation
		\begin{equation}\label{eq::inhomog_mass_AC}
			\epsilon^{2s}(-\Delta)^su_\epsilon+W'(u_\epsilon)+\lambda_\epsilon=0,
		\end{equation}
		where~$\lambda_\epsilon$ is the Lagrange multiplier associated with the conservation law
		\begin{equation}\label{bfjdew9r7912345678kjhgf}
			\int_\Omega u_\epsilon dx=m,
		\end{equation}
		and subject to a non-local Neumann external condition (see~\eqref{eq::u_def_outside_omega} in Theorem~\ref{th::u_def_outside_omega}).
		
		In turn, we find that solutions of such an equation are unconstrained critical points of a perturbed energy functional $\G_\epsilon$ (see~\eqref{eq::def_G_epsilon}). 
		Moreover, thanks to Theorem~\ref{th::energy_uniform_estimate}, we have that a suitable rescaling of the Lagrange multipliers $\{\lambda_\epsilon\}_\epsilon$ is bounded uniformly in $\epsilon$. Thus, we infer that $\G_\epsilon$ is $\Gamma$-convergent to a limit functional $\G$, whose first variation is related with the non-local hybrid mean curvature (see Definition~\ref{def::hybrid_curv} below). 
		\medskip
	
	We start drawing attention to the fact that the mass constraint reads a non-local Neumann external condition, which forces a strict relationship between how any solution of~\eqref{eq::prescribed_mass_AC} behaves inside and outside~$\Omega$. More precisely, the following holds true:
	
	\begin{theorem}[External behavior of solutions of the prescribed-mass non-local Allen-Cahn Equation] \label{th::u_def_outside_omega}
		Let~$u_\epsilon\in Z_{M,m}$ be a solution of~\eqref{eq::prescribed_mass_AC}.
		
		Then,
		\begin{equation}  \label{eq::u_def_outside_omega}
			u_\epsilon(x) = \frac{\displaystyle\int_\Omega \frac{u_\epsilon(y)}{|x-y|^{n+2s}}dy }{\displaystyle\int_\Omega \frac{1}{|x-y|^{n+2s}}dy }\quad\text{a.e. in }\Omega^c.
		\end{equation}
	\end{theorem}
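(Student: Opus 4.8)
The plan is to read off the identity~\eqref{eq::u_def_outside_omega} as the Euler--Lagrange equation of the constrained minimization~\eqref{eq::prescribed_mass_AC} tested against perturbations that are supported in~$\Omega^c$. The point is that, among the three constituents of~$\F_\epsilon$, namely~$\tfrac12 u(\Omega,\Omega)$, $u(\Omega,\Omega^c)$ and~$\int_\Omega W(u)\,dx$ (up to the~$u$-independent rescaling factor in~\eqref{eq::F_epsilon}), only the cross term~$u(\Omega,\Omega^c)$ feels the values of~$u$ in~$\Omega^c$; moreover a perturbation living in~$\Omega^c$ affects neither the mass constraint~$\int_\Omega u\,dx=m$ nor the bound~$\|u\|_{L^\infty(\Omega)}\le M$. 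Hence~$u_\epsilon|_{\Omega^c}$ must minimize the strictly convex quadratic functional~$w\mapsto\int_\Omega\int_{\Omega^c}\frac{|u_\epsilon(x)-w(y)|^2}{|x-y|^{n+2s}}\,dx\,dy$, and~\eqref{eq::u_def_outside_omega} is precisely the vanishing of its first variation.

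Concretely, I would first record that the infimum in~\eqref{eq::prescribed_mass_AC} is finite: the constant function~$u\equiv m/|\Omega|$ lies in~$Z_{M,m}$ (note~$|m|/|\Omega|\le1<M$) and has~$\K(u,\Omega)=0$ and~$\int_\Omega W(u)\,dx<+\infty$, so any minimizer~$u_\epsilon$ satisfies~$u_\epsilon(\Omega,\Omega^c)\le2\K(u_\epsilon,\Omega)<+\infty$. Then fix~$\delta>0$ and~$R>0$, put~$A_{\delta,R}:=\{x\in\R^n:\operatorname{dist}(x,\Omega)\ge\delta\}\cap B_R$, and for~$\psi\in L^\infty(\R^n)$ supported in~$A_{\delta,R}$ and~$t\in\R$ consider the competitor~$v_t:=u_\epsilon+t\psi$. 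Since~$\psi\equiv0$ on~$\Omega$, one has~$v_t\in Z_{M,m}$ for every~$t$; and since~$|x-y|\ge\delta$ for~$x\in\Omega$, $y\in A_{\delta,R}$, the estimate~$\int_\Omega\int_{\Omega^c}\frac{\psi(y)^2}{|x-y|^{n+2s}}\,dx\,dy\le|\Omega|\,\delta^{-n-2s}\|\psi\|_{L^\infty(\R^n)}^2\,|A_{\delta,R}|<+\infty$, combined with~$u_\epsilon(\Omega,\Omega^c)<+\infty$ and Cauchy--Schwarz, guarantees~$\K(v_t,\Omega)<+\infty$ and the absolute integrability needed to apply Fubini. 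Expanding the square and cancelling the contributions of~$W$ and of~$u(\Omega,\Omega)$, which do not depend on~$t$, gives
\begin{equation*}
\F_\epsilon(v_t,\Omega)-\F_\epsilon(u_\epsilon,\Omega)=\rho_\epsilon\left[-2t\int_{\Omega^c}\psi(y)\Big(\int_\Omega\frac{u_\epsilon(x)-u_\epsilon(y)}{|x-y|^{n+2s}}\,dx\Big)dy+t^2\int_\Omega\int_{\Omega^c}\frac{\psi(y)^2}{|x-y|^{n+2s}}\,dx\,dy\right],
\end{equation*}
where~$\rho_\epsilon>0$ is the~$u$-independent coefficient of~$\K(\cdot,\Omega)$ in the definition~\eqref{eq::F_epsilon} of~$\F_\epsilon$. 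As~$v_t$ is admissible for every~$t\in\R$, minimality of~$u_\epsilon$ forces the coefficient of~$t$ above to vanish, i.e.~$\int_{\Omega^c}\psi(y)\big(\int_\Omega\frac{u_\epsilon(x)-u_\epsilon(y)}{|x-y|^{n+2s}}\,dx\big)dy=0$ for every such~$\psi$. By the fundamental lemma of the calculus of variations this yields
$$\int_\Omega\frac{u_\epsilon(x)-u_\epsilon(y)}{|x-y|^{n+2s}}\,dx=0\qquad\text{for a.e. }y\in A_{\delta,R}.$$
Letting~$\delta\to0^+$ and~$R\to+\infty$ exhausts~$\Omega^c$ up to the Lebesgue-null set~$\partial\Omega$ (recall~$\Omega$ is Lipschitz), and for~$y\notin\overline\Omega$ the quantity~$\int_\Omega\frac{dx}{|x-y|^{n+2s}}$ is finite and strictly positive; solving for~$u_\epsilon(y)$ and relabelling the variables gives~\eqref{eq::u_def_outside_omega}.

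The only genuinely delicate point is the admissibility of the competitor together with the integrability of the cross term near~$\partial\Omega$, which is exactly why I localize the perturbation at positive distance~$\delta$ from~$\Omega$ and recover the identity first on~$A_{\delta,R}$ before letting~$\delta\to0^+$; everything else is a one-line first-variation computation. (Alternatively, one may observe directly that~$u_\epsilon|_{\Omega^c}$ is the unique minimizer of the strictly convex functional above and identify that minimizer explicitly.)
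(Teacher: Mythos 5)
Your proposal is correct and follows essentially the same route as the paper: both compute the first variation of the only term of $\F_\epsilon$ that sees $u$ on $\Omega^c$ (the cross interaction $u(\Omega,\Omega^c)$) with respect to perturbations supported at positive distance from $\Omega$, which leave the mass constraint and the $L^\infty(\Omega)$ bound untouched, and then conclude that the linear term must vanish. The only cosmetic difference is that you kill the linear term by letting $t$ range over $\R$ and invoking the vanishing of the first-order coefficient, whereas the paper reaches the same conclusion through a rescaling/normalization of the test function exploiting the different homogeneities of the linear and quadratic contributions.
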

	
	\begin{rem} \label{rem::not_char_funct}
		The right-hand side of~\eqref{eq::u_def_outside_omega} corresponds to a non-local Neumann condition, see~\cite{MR3651008}.
		
		We also observe that, in general, a function~$u$ satisfying~\eqref{eq::u_def_outside_omega}
		is not a signed characteristic
		function, i.e. a function such that~$|u|=1$ almost everywhere in~$\R^n$. Indeed, notice that, for almost any~$x\in\Omega^c$, we have that~$u(x)=\pm1$ if and only if either~$E\cap\Omega=\Omega$ or~$E\cap\Omega=\varnothing$.
		
		This is an interesting structural difference with respect to the existing literature about
		non-local interfaces, which mostly focused on ``pure phases''
		identified by global signed characteristic
		functions.
	\end{rem}
	
	
	Next, we show that a suitable rescaling of the Lagrange multipliers sequence~$\{\lambda_\epsilon\}_\epsilon$ defined in~\eqref{eq::inhomog_mass_AC} converges (up to a subsequence) in~$\R$, as~$\epsilon\to0^+$.
	
	To this purpose, let us define the quantity
	\begin{equation}\label{kappaepsilondef}
		\kappa_\epsilon:=
		\begin{cases}
			\epsilon^{-2s},\quad&\text{if }s\in\left(0,\frac{1}{2}\right),\\
			|\epsilon\log\epsilon|^{-1},\quad&\text{if }s=\frac{1}{2},\\
			\epsilon^{-1},\quad&\text{if }s\in\left(\frac{1}{2},1\right).
		\end{cases}
	\end{equation}
	Then, we prove the following:
	
	\begin{theorem} [Convergence of the Lagrange multipliers for the fractional Allen-Cahn energy functional subject to a mass constraint]
		\label{th::multiplier_bound} 
		Let~$\{u_\epsilon\}_\epsilon$ be a sequence of solutions of~\eqref{eq::inhomog_mass_AC} and~\eqref{bfjdew9r7912345678kjhgf}
		with~$m\in(-|\Omega|,|\Omega|)$,
		such that~$u_\epsilon\to u:=\chi_E-\chi_{E^c}$ in~$L^1(\Omega)$, for some set~$E\subseteq\R^n$.
		
		Suppose that
		\begin{equation} \label{eq::energy_unif_estimateBIS}
			\F_\epsilon(u_\epsilon,\Omega)\leq C,
		\end{equation}
		for some positive constant~$C$ independent of~$\epsilon$.
		
		Then, the sequence~$\{\mu_\epsilon:=\kappa_\epsilon\lambda_\epsilon\}_\epsilon$ is bounded in~$\R$, and hence convergent up to a subsequence. 
	\end{theorem}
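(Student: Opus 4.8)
The plan is to reduce the statement to a uniform bound on a single real quantity attached to~$u_\epsilon$, and then to extract that bound from the minimality of~$u_\epsilon$ by comparison with a competitor that shifts the prescribed mass by a small amount at controlled energy cost. \textbf{Step 1 (reduction via the non-local Neumann condition).} Since~$u_\epsilon$ solves~\eqref{eq::prescribed_mass_AC}, Theorem~\ref{th::u_def_outside_omega} gives~\eqref{eq::u_def_outside_omega}, equivalently~$\int_\Omega(u_\epsilon(x)-u_\epsilon(y))|x-y|^{-n-2s}\,dx=0$ for a.e.~$y\in\Omega^c$. Combining this with the oddness of the kernel on~$\Omega\times\Omega$, and using that~$(-\Delta)^su_\epsilon=-\epsilon^{-2s}(W'(u_\epsilon)+\lambda_\epsilon)\in L^\infty(\Omega)$ by~\eqref{eq::inhomog_mass_AC} (so that, after a routine approximation of~$\chi_\Omega$ by functions in~$C^\infty_c(\Omega)$ and the weak formulation of the fractional Laplacian, the computation below is legitimate), one obtains the ``divergence theorem'' companion of the Neumann condition, namely~$\int_\Omega(-\Delta)^su_\epsilon\,dx=0$; integrating~\eqref{eq::inhomog_mass_AC} over~$\Omega$ then yields
\[ \mu_\epsilon=\kappa_\epsilon\lambda_\epsilon=-\frac{\kappa_\epsilon}{|\Omega|}\int_\Omega W'(u_\epsilon)\,dx. \]
Set~$\Phi(\tau):=\F_\epsilon(u_\epsilon+\tau,\Omega)$. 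Since~$\K(\,\cdot\,,\Omega)$ in~\eqref{eq::def_K} depends only on differences of values, $\Phi(\tau)=\kappa_\epsilon\epsilon^{2s}\K(u_\epsilon,\Omega)+\kappa_\epsilon\int_\Omega W(u_\epsilon+\tau)\,dx$ is a smooth function of~$\tau$ with~$\Phi(0)=\F_\epsilon(u_\epsilon,\Omega)$ and~$\Phi'(0)=\kappa_\epsilon\int_\Omega W'(u_\epsilon)\,dx=-\mu_\epsilon|\Omega|$. Hence it suffices to bound~$|\Phi'(0)|$ uniformly in~$\epsilon$.

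\textbf{Step 2 (a mass-adjusting competitor).} From~$u_\epsilon\to\chi_E-\chi_{E^c}$ in~$L^1(\Omega)$ we get~$|E\cap\Omega|=\tfrac12(|\Omega|+m)\in(0,|\Omega|)$, where the strict inclusion~$m\in(-|\Omega|,|\Omega|)$ is used. Assume first~$\haus{n-1}(\partial^*E\cap\Omega)>0$ (the complementary case, in which~$\Per(E,\Omega)=0$ and~$\Omega$ is disconnected with~$E\cap\Omega$ a union of its components, is handled separately by a local perturbation of~$u_\epsilon$ inside one component). Since then~$|\nu_E|=1$ on a set of positive~$\haus{n-1}$-measure in~$\Omega$, choose~$Y\in C^\infty_c(\Omega;\R^n)$ with~$\beta:=2\int_{\partial^*E\cap\Omega}Y\cdot\nu_E\,d\haus{n-1}\neq0$, and let~$\{\Phi_t\}_{|t|<t_0}$ be its flow: it fixes~$\Omega$ and obeys~$\norm{D\Phi_t^{\pm1}-\mathrm{Id}}_{L^\infty}+\big\||\det D\Phi_t|-1\big\|_{L^\infty}\le C_Y|t|$. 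For small~$\tau$ let~$\hat u_\tau$ be the truncation of~$u_\epsilon+\tau$ to~$[-M,M]$ (no truncation is needed if~$\norm{u_\epsilon}_{L^\infty(\Omega)}<M$) and set~$v_{\tau,t}:=\hat u_\tau\circ\Phi_t^{-1}$, which lies in~$X_M$ because truncation and the identity~$\norm{v\circ\Phi_t^{-1}}_{L^\infty(\Omega)}=\norm{v}_{L^\infty(\Omega)}$ (valid since~$\Phi_t(\Omega)=\Omega$) preserve the~$L^\infty(\Omega)$-bound. The map~$(\tau,t)\mapsto\int_\Omega v_{\tau,t}\,dx=\int_\Omega\hat u_\tau\,|\det D\Phi_t|\,dx$ equals~$m$ at~$(0,0)$, is $1$-Lipschitz in~$\tau$ up to the factor~$|\Omega|$, is~$C^1$ in~$t$ with~$t$-derivative~$\int_\Omega u_\epsilon\,\mathrm{div}\,Y\,dx$ at~$(0,0)$, and the latter converges to~$\beta\neq0$ as~$\epsilon\to0$ (by $L^1(\Omega)$-convergence of~$u_\epsilon$ and the Gauss--Green formula); hence for~$\epsilon$ and~$|\tau|$ small one solves~$\int_\Omega v_{\tau,t}\,dx=m$ for~$t=t(\tau)$ with~$|t(\tau)|\le C|\tau|$, so that~$v_\tau:=v_{\tau,t(\tau)}\in Z_{M,m}$. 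Finally, truncation increases neither~$\K(\,\cdot\,,\Omega)$ (it is $1$-Lipschitz) nor~$\int_\Omega W(\,\cdot\,)\,dx$, while a change of variables in the double integral defining~$\K$, using the above bi-Lipschitz and Jacobian bounds together with~$\Phi_t(\Omega)=\Omega$, gives~$\K(w\circ\Phi_t^{-1},\Omega)\le(1+C|t|)\K(w,\Omega)$ and~$\int_\Omega W(w\circ\Phi_t^{-1})\,dx\le(1+C|t|)\int_\Omega W(w)\,dx$, with~$C$ uniform in~$s\in(0,1)$. Therefore
\[ \F_\epsilon(v_\tau,\Omega)\le(1+C|t(\tau)|)\,\F_\epsilon(u_\epsilon+\tau,\Omega)=(1+C|t(\tau)|)\,\Phi(\tau)\le(1+C'|\tau|)\,\Phi(\tau). \]

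\textbf{Step 3 (conclusion).} By minimality of~$u_\epsilon$ in~$Z_{M,m}$ and~$v_\tau\in Z_{M,m}$, we have~$\Phi(0)=\F_\epsilon(u_\epsilon,\Omega)\le\F_\epsilon(v_\tau,\Omega)\le(1+C'|\tau|)\,\Phi(\tau)$. Since~$\Phi$ is continuous and~$\Phi(0)\le C_0$ by assumption~\eqref{eq::energy_unif_estimateBIS}, we get~$\Phi(\tau)\le2C_0$ and hence~$\Phi(\tau)\ge\Phi(0)-\widetilde C|\tau|$ for all small~$\tau$, with~$\widetilde C$ independent of~$\epsilon$. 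Dividing by~$\tau$ and letting~$\tau\to0^{\pm}$ (recall~$\Phi$ is smooth) gives~$|\Phi'(0)|\le\widetilde C$, i.e.~$|\mu_\epsilon|=|\Phi'(0)|/|\Omega|\le\widetilde C/|\Omega|$. Thus~$\{\mu_\epsilon\}$ is bounded in~$\R$, and so convergent along a subsequence.

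\textbf{Main obstacle.} The delicate part is Step~2: producing, \emph{uniformly in~$\epsilon$}, a competitor in~$Z_{M,m}$ whose mass deviates from~$m$ by a prescribed~$O(|\tau|)$ while its energy exceeds~$\Phi(\tau)$ by at most~$O(|\tau|)$. This rests on the uniform non-degeneracy~$\big|\int_\Omega u_\epsilon\,\mathrm{div}\,Y\big|\ge|\beta|/2>0$ for~$\epsilon$ small — precisely where the strict condition~$m\in(-|\Omega|,|\Omega|)$ enters, through~$\partial^*E\cap\Omega\neq\emptyset$ — and on the control of the non-local Gagliardo energy under a bi-Lipschitz transport, which demands careful bookkeeping of the Jacobian and kernel-distortion factors and a verification that the resulting constants remain bounded as~$s\to0^+$ and~$s\to1^-$. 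The degenerate geometric case~$\partial^*E\cap\Omega=\emptyset$ (possible only for disconnected~$\Omega$) lies outside this scheme and requires a separate, more elementary argument; similarly, the identity~$\int_\Omega(-\Delta)^su_\epsilon\,dx=0$ from Step~1, although expected, needs the care indicated there, especially for~$s\in[\tfrac12,1)$.
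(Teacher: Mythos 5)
Your argument has a genuine gap with respect to the statement as given: the theorem is about \emph{solutions of the Euler--Lagrange equation}~\eqref{eq::inhomog_mass_AC} with the mass constraint and a uniform energy bound, i.e.\ critical points, not minimizers (this generality is exactly what is needed later, in the proof of Theorem~\ref{th::conv_crit_points}). Your proof invokes minimality twice in an essential way: in Step~1 you use Theorem~\ref{th::u_def_outside_omega} (the non-local Neumann condition), which is derived from minimality under perturbations supported in~$\Omega^c$ and does not follow from~\eqref{eq::inhomog_mass_AC}, an equation posed only in~$\Omega$; and in Step~3 the whole mechanism rests on the global comparison~$\F_\epsilon(u_\epsilon,\Omega)\le\F_\epsilon(v_\tau,\Omega)$ for the competitor~$v_\tau\in Z_{M,m}$. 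Neither is available under the stated hypotheses, so what you prove is the boundedness of~$\mu_\epsilon$ for minimizers only. The paper instead tests~\eqref{eq::inhomog_mass_AC} against~$\psi\cdot\nabla u_\epsilon$ with~$\psi\in\cont^\infty_c(\Omega,\R^n)$ (a domain-variation/Pohozaev identity), bounds the resulting fractional term by~$C\,\F_\epsilon(u_\epsilon,\Omega)$ and the potential term likewise, and then bounds~$\big|\int_\Omega u_\epsilon\,{\rm div}\,\psi\,dx\big|$ from below using~$\Per(E,\Omega)>0$ (via the relative isoperimetric inequality and~$|m|<|\Omega|$) --- the same non-degeneracy your~$\beta\neq0$ encodes --- so only criticality and the energy bound are used.

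Even restricted to minimizers, two of your steps are acknowledged but not carried out, and they are not routine. First, the identity~$\int_\Omega(-\Delta)^su_\epsilon\,dx=0$ requires an absolute-integrability justification for the Fubini exchange on~$\Omega\times\Omega^c$; for~$s\in[1/2,1)$ the kernel~$|x-y|^{-n-2s}$ is not integrable across~$\partial\Omega$ and the Gagliardo energy bound alone does not give~$\frac{|u_\epsilon(x)-u_\epsilon(y)|}{|x-y|^{n+2s}}\in L^1(\Omega\times\Omega^c)$, so this step is genuinely open as written (note also that bounding~$\kappa_\epsilon\int_\Omega W'(u_\epsilon)\,dx$ cannot be done directly, since~$|W'|\sim\sqrt{W}$ near the wells only yields~$O(\kappa_\epsilon^{1/2})$; your reduction really does need the full competitor argument, so these gaps are load-bearing). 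Second, the degenerate case~$\Per(E,\Omega)=0$ is deferred to ``a separate, more elementary argument'' that is not supplied; the paper excludes it outright from~$m\in(-|\Omega|,|\Omega|)$ via the relative isoperimetric inequality, and you could do the same instead of leaving a case open. Finally, a small slip in Step~3: the claim~$\Phi(\tau)\le 2C_0$ is not uniform in~$\epsilon$ (since~$\Phi(\tau)-\Phi(0)$ carries a factor~$\kappa_\epsilon$), but it is also not needed, as~$\Phi(0)\le(1+C'|\tau|)\Phi(\tau)$ together with~$\Phi(0)\le C_0$ already yields~$|\Phi'(0)|\le C'C_0$ upon dividing by~$\tau$ and letting~$\tau\to0^\pm$.
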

	
	In light of Theorem~\ref{th::multiplier_bound}, for every~$\epsilon>0$, we define the perturbed energy functional~$\G_\epsilon:X_M\to\R\cup\{+\infty\}$ associated with~\eqref{eq::inhomog_mass_AC} as
	\begin{equation} \label{eq::def_G_epsilon}
		\G_\epsilon(v,\Omega) := \F_\epsilon(v,\Omega)+ \mu_\epsilon\int_\Omega v\, dx.
	\end{equation}
	
	Moreover, assume that
	\begin{equation} \label{eq::mu_lagr_mult}
		\mu := \lim_{\epsilon\to0^+} \mu_\epsilon.
	\end{equation}
	Then, we deduce the following convergence result.
	
	\begin{theorem}[Energy limits subject to mass constraints] \label{th::energy_continuity}
		Let~$\{u_\epsilon\}_\epsilon\subset Z_{M,m}$  be a sequence of solutions for~\eqref{eq::prescribed_mass_AC}, i.e. minimizers for the energy functional~$\F_\epsilon$ subject to a mass constraint. 
		
		Then, there exist a convergent subsequence and a function~$u\in Z_{M,m}$ such that~$u_\epsilon\to u$ in~$L^1_{\loc}(\R^n)$, and~$u=\chi_E-\chi_{E^c}$ in~$\Omega$, for some set~$E$.
		
		Moreover, $u$ is a minimizer for~$\F$ in~$\Omega$ under the mass constraint and
		\begin{equation} \label{eq::F_convmass_prescribed}
			\lim_{\epsilon\to0^+} \F_\epsilon(u_\epsilon,\Omega) = \F(u,\Omega) .
		\end{equation}
		Besides,
		\begin{equation} \label{eq::G_convmass_prescribed}
			\lim_{\epsilon\to0^+} \G_\epsilon(u_\epsilon,\Omega) = \F(u,\Omega)+\mu\int_\Omega u\, dx =: \G(u,\Omega) .
		\end{equation}
	\end{theorem}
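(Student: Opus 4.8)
The plan is to run the standard compactness-plus-$\Gamma$-convergence argument (the ``fundamental theorem of $\Gamma$-convergence''), adapted to the mass-constrained setting, feeding in three already established facts: the uniform energy bound of Theorem~\ref{th::energy_uniform_estimate}, the constrained $\Gamma$-convergence of Theorem~\ref{th::gamma_conv_mass_prescribed}, and the boundedness of the rescaled Lagrange multipliers of Theorem~\ref{th::multiplier_bound}. First I would extract the limit: by Theorem~\ref{th::energy_uniform_estimate} one has $\sup_\epsilon\F_\epsilon(u_\epsilon,\Omega)<+\infty$, and, arguing as in the proof of Theorem~\ref{th::min_conv} (and as in~\cite{savin_valdinoci_gamma_conv}, using in addition the non-local Neumann representation of Theorem~\ref{th::u_def_outside_omega} to control the $u_\epsilon$ outside $\Omega$), this bound yields $L^1_{\loc}(\R^n)$-precompactness of $\{u_\epsilon\}_\epsilon$; hence, up to a subsequence, $u_\epsilon\to u$ in $L^1_{\loc}(\R^n)$. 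Since $\Omega$ is bounded and $\norm{u_\epsilon}_{L^\infty(\Omega)}\le M$, the convergence also holds in $L^1(\Omega)$, so $\int_\Omega u\,dx=\lim_\epsilon\int_\Omega u_\epsilon\,dx=m$ and $\norm{u}_{L^\infty(\Omega)}\le M$, i.e. $u\in Z_{M,m}$. Applying Theorem~\ref{th::gamma_conv_mass_prescribed}-\eqref{item::liminf_mass_prescribed} gives $\F(u,\Omega)\le\liminf_{\epsilon\to0^+}\F_\epsilon(u_\epsilon,\Omega)<+\infty$, and by the definition of $\F$ in~\eqref{eq::F} the finiteness of $\F(u,\Omega)$ forces $u|_\Omega=\chi_E-\chi_{E^c}$ for some set $E$.

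Next I would obtain the minimality of $u$ and the energy identity~\eqref{eq::F_convmass_prescribed} at once. For an arbitrary competitor $v\in Z_{M,m}$, Theorem~\ref{th::gamma_conv_mass_prescribed}-\eqref{item::limsup_mass_prescribed} provides a recovery sequence $\{v_\epsilon\}_\epsilon\subset Z_{M,m}$ with $v_\epsilon\to v$ in $L^1_{\loc}(\R^n)$ and $\limsup_{\epsilon\to0^+}\F_\epsilon(v_\epsilon,\Omega)\le\F(v,\Omega)$. Minimality of $u_\epsilon$ over $Z_{M,m}$ yields $\F_\epsilon(u_\epsilon,\Omega)\le\F_\epsilon(v_\epsilon,\Omega)$, so, taking $\limsup$ and combining with the previous step,
\[
\F(u,\Omega)\ \le\ \liminf_{\epsilon\to0^+}\F_\epsilon(u_\epsilon,\Omega)\ \le\ \limsup_{\epsilon\to0^+}\F_\epsilon(u_\epsilon,\Omega)\ \le\ \F(v,\Omega)\qquad\text{for every } v\in Z_{M,m}.
\]
Choosing $v=u$ (admissible, since $u\in Z_{M,m}$) collapses the chain into equalities, which simultaneously gives $\lim_{\epsilon\to0^+}\F_\epsilon(u_\epsilon,\Omega)=\F(u,\Omega)$, i.e.~\eqref{eq::F_convmass_prescribed}, and $\F(u,\Omega)\le\F(v,\Omega)$ for all $v\in Z_{M,m}$, i.e. $u$ minimizes $\F$ in $\Omega$ under the mass constraint.

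Finally, for~\eqref{eq::G_convmass_prescribed} I would use the mass constraint to write $\G_\epsilon(u_\epsilon,\Omega)=\F_\epsilon(u_\epsilon,\Omega)+\mu_\epsilon\int_\Omega u_\epsilon\,dx=\F_\epsilon(u_\epsilon,\Omega)+\mu_\epsilon m$. Since a minimizer of $\F_\epsilon$ under the mass constraint solves~\eqref{eq::inhomog_mass_AC}, the hypotheses of Theorem~\ref{th::multiplier_bound} are in force (by Theorem~\ref{th::energy_uniform_estimate} and the first step), so the sequence $\{\mu_\epsilon\}_\epsilon$ is bounded; passing to a further subsequence and invoking~\eqref{eq::mu_lagr_mult}, one has $\mu_\epsilon\to\mu$. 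Letting $\epsilon\to0^+$ and using~\eqref{eq::F_convmass_prescribed} together with $\int_\Omega u\,dx=m$ yields $\lim_{\epsilon\to0^+}\G_\epsilon(u_\epsilon,\Omega)=\F(u,\Omega)+\mu m=\F(u,\Omega)+\mu\int_\Omega u\,dx=\G(u,\Omega)$, which is~\eqref{eq::G_convmass_prescribed}.

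The real content of the statement lies in the imported results rather than in this assembly: the $s\in[1/2,1)$ recovery sequence of Theorem~\ref{th::gamma_conv_mass_prescribed}, which has to be perturbed so as to respect the mass condition, and the uniform control of the Lagrange multipliers in Theorem~\ref{th::multiplier_bound}. Granting those, the only point requiring attention here is that the two subsequence extractions — for the $L^1_{\loc}$-compactness of $\{u_\epsilon\}_\epsilon$ and for the convergence of $\{\mu_\epsilon\}_\epsilon$ — be carried out along one common subsequence, which is harmless since only finitely many extractions occur.
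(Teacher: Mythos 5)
Your proposal is correct and follows essentially the same route as the paper: compactness from the uniform energy bound of Theorem~\ref{th::energy_uniform_estimate} together with the non-local Neumann representation of Theorem~\ref{th::u_def_outside_omega} for the behaviour outside~$\Omega$, then the $\liminf$/$\limsup$ sandwich from Theorem~\ref{th::gamma_conv_mass_prescribed} combined with the minimality of~$u_\epsilon$ to get both the minimality of~$u$ and~\eqref{eq::F_convmass_prescribed}, and finally~$\mu_\epsilon\int_\Omega u_\epsilon\,dx=\mu_\epsilon m\to\mu m$ for~\eqref{eq::G_convmass_prescribed}. The only cosmetic differences are that the paper reads off~$u|_\Omega=\chi_E-\chi_{E^c}$ directly from the compactness lemma rather than from the finiteness of~$\F(u,\Omega)$, and identifies the limit in~$\Omega^c$ explicitly via dominated convergence.
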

	
	\begin{rem}
		We stress that Theorem~\ref{th::energy_continuity} establishes the convergence of the sequence~$\{u_\epsilon\}_\epsilon$ in~$L^1_{\loc}(\R^n)$, rather than only in~$L^1(\Omega)$ (compare with Theorem~\ref{th::min_conv} above). This is a direct consequence of the non-local Neumann condition (see Theorem~\ref{th::u_def_outside_omega}).
	\end{rem}
	
	We prove Theorems~\ref{th::u_def_outside_omega} and~\ref{th::multiplier_bound} in Sections~\ref{sec::u_def_outside_omega} and~\ref{sec::lagr_mult_conv_prescribed_mass} respectively, 
	while Section~\ref{sec::min_conv_prescribed_mass} is devoted to the proof of Theorem~\ref{th::energy_continuity}.
	
	\subsection{Convergence for critical points of~$\G_\epsilon$ and non-local hybrid mean curvature}
	In~\cite{MR3900821}, the authors have proved convergence results for critical points of a fractional Allen-Cahn equation with Dirichlet boundary conditions and~$s\in(0,1/2)$. Here we generalize~\cite[Theorem~5.1]{MR3900821} relaxing the assumptions on the external data~$\{g_k\}_k$. Indeed, as shown in Theorem~\ref{th::energy_continuity}, a sequence~$\{u_\epsilon\}_\epsilon$ of minimizers for~$\F_\epsilon|_{Z_{M,m}}$ converges in~$L^1_{\loc}(\R^n)$ to a minimizer~$u$ of the limit functional~$\F|_{Z_{M,m}}$. However, one cannot expect~$u$ to be a signed characteristic function in the whole~$\R^n$, as pointed out by
	Theorem~\ref{th::u_def_outside_omega} and Remark~\ref{rem::not_char_funct}. 
	
	This fact leads us to introduce a new geometric object that, as far as we know, has not yet been investigated in the literature, and which coincides with the  first variation of the~$K$-contribution of the fractional~$H^s$-Gagliardo semi-norm of~$u$, for any Lipschitz~$K\comp\Omega$. Since~$u=\chi_E-\chi_{E^c}$ in~$\Omega$, for some set~$E$, we refer to this new object as the
	``non-local hybrid mean curvature'' of the couple~$(E,u|_{\Omega^c})$.
	
	More precisely:
	
	\begin{definition}[Non-local hybrid mean curvature]\label{def::hybrid_curv}
		Let~$s\in(0,1/2)$, and consider a bounded, Lipschitz, open set~$\Omega\subset\R^n$. Consider a set~$E$ and a function~$g\in L^\infty(\Omega^c)$, and let
		\begin{equation*}
			u:=
			\begin{cases}
				\chi_E-\chi_{E^c},\quad&\mbox{in }\Omega,\\		
				g,\quad&\mbox{in }\Omega^c.
			\end{cases}
		\end{equation*} 
		
		Given a vector field~$X\in\cont^1_c(\Omega,\R^n)$, if~$\{\phi_t\}_t$ is the flow generated by~$X$, we define
		$$ \delta\K(u,\Omega)[X] := \frac{d}{dt}\K(\phi_t(u),\Omega) \big|_{t=0},$$
		where~$\K(u,\Omega)$
		has been introduced in~\eqref{eq::def_K}.
		
		We call~$\delta\K(u,\Omega)$ the \textbf{non-local hybrid mean curvature} of the couple~$(E,g)$.
	\end{definition}
	
	With this, we now state a version of~\cite[Theorem~5.1]{MR3900821} more suitable to our setting.
	
	\begin{theorem}[Convergence for solutions of inhomogeneous Allen-Cahn Equation] \label{th::millot_sire_wang}
		Let~$\Omega\subset\R^n$ be a Lipschitz, bounded, open set. Let~$\eta>0$, and define 
		$$ \Omega_\eta:=\{x\in\R^n \text{ s.t. } {\rm{dist}}(x,\Omega)<\eta\}.$$
		Also, let~$\{\epsilon_k\}_k\subset\R$ be a positive infinitesimal sequence.
		
		For all~$k\in\N$, let~$g_k:\Omega^c\to\R$. Suppose that~$\{g_k\}_k
		\subseteq\cont^{0,1}(\Omega_\eta\setminus \Omega)$  satisfy$$\sup_k\norm{g_k}_{L^\infty(\Omega^c)}<+\infty,$$ and~$g_k\to g$ in~$L^1_{\loc}(\Omega^c)$, for some function~$g$.
		
		Let~$\{f_k\}_k\subseteq\cont^{0,1}(\Omega)$ satisfy
		\begin{equation*}\label{eq::f_grow}
			\sup_k \Big(\epsilon_k^{2s}\norm{f_k}_{L^\infty(\Omega)}+\norm{f_k}_{W^{1,q}(\Omega)}\Big)<+\infty,\quad \text{ for some }q\in\left(\frac{n}{1-2s},n\right),
		\end{equation*}
		and be such that~$f_k\rightharpoonup f$ weakly in~$W^{1,q}(\Omega)$, for some function~$f\in W^{1,q}(\Omega)$.
		
		Let~$\{u_k\}_k$ be such that~$u_k\in H^s(\Omega)\cap L^p(\Omega)$, for some~$p\geq1$. Suppose that~$u_k$ is a weak solution of
		\begin{equation} \label{eq::weak_eq_u_k}
			\begin{cases}
				(-\Delta)^s u_k+\epsilon_k^{-2s}W'(u_k)=f_k\quad&\text{in }\Omega,\\
				u_k=g_k\quad&\text{in }\Omega^c,
			\end{cases}
		\end{equation}
		for every~$k\in\N$.
		
		If
		\begin{equation}\label{mnbvcxz21345678jhgfdsurieowjdh}\sup_k\left(\F_{\epsilon_k}(u_k,\Omega)-\int_{\Omega}f_ku_kdx\right)<+\infty,\end{equation} then there exist a convergent subsequence and a function~$u$ such that~$u_k\to u$ in~$L^1_{\loc}(\R^n)$, $u=\chi_E-\chi_{E^c}$ in~$\Omega$, for some set~$E$, and 
		\begin{equation*}
			\delta\K(u,\Omega)[X] = \kappa\int_{E\cap\Omega} \mbox{div}(fX)dx \quad\text{for every }X\in\cont^1_c(\Omega,\R^n), 
		\end{equation*}
		for some~$\kappa>0$ depending only on~$n$ and~$s$.
	\end{theorem}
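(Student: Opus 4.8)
\emph{Strategy.} The argument has two parts: a compactness step producing the limiting couple~$(E,u|_{\Omega^c})$, and the passage to the limit in the inner-variation (Pucci--Serrin type) identity satisfied by each~$u_k$, whose limit is exactly the identity defining the non-local hybrid mean curvature of~$(E,u|_{\Omega^c})$. Throughout we are in the regime~$s\in(0,1/2)$ (forced both by the range of~$q$ and by Definition~\ref{def::hybrid_curv}), so that~$\F_\epsilon(v,\Omega)=\K(v,\Omega)+\epsilon^{-2s}\int_\Omega W(v)\,dx$.

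\emph{Step 1: a priori bounds and compactness.} First one establishes a uniform bound~$\norm{u_k}_{L^\infty(\R^n)}\le M$: outside~$\Omega$ this is the assumption~$\sup_k\norm{g_k}_{L^\infty(\Omega^c)}<+\infty$, while inside~$\Omega$ it follows from a non-local comparison argument for~\eqref{eq::weak_eq_u_k}, using the uniform bound on~$\epsilon_k^{2s}\norm{f_k}_{L^\infty(\Omega)}$ and the growth of~$W'$ at infinity, so that sufficiently large constants are comparison super/sub-solutions (cf.~\cite{MR3900821}). Once~$u_k\in X_M$, the hypothesis~\eqref{mnbvcxz21345678jhgfdsurieowjdh} together with~$\bigl|\int_\Omega f_ku_k\,dx\bigr|\le M\norm{f_k}_{L^1(\Omega)}\le CM$ (from the uniform~$W^{1,q}$ bound) gives~$\F_{\epsilon_k}(u_k,\Omega)\le C$. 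The compactness theory behind the~$\Gamma$-convergence of the non-local Allen--Cahn functional (the density estimates of~\cite{savin_valdinoci_density_estimates}, as exploited in~\cite{savin_valdinoci_gamma_conv,MR3900821}) then yields, along a subsequence,~$u_k\to u$ in~$L^1_{\loc}(\R^n)$ with~$u|_\Omega=\chi_E-\chi_{E^c}$ for some set~$E$; since~$u_k=g_k\to g$ in~$L^1_{\loc}(\Omega^c)$, also~$u=g$ a.e.\ in~$\Omega^c$, so~$u$ has the form required in Definition~\ref{def::hybrid_curv}, and~$\K(u,\Omega)<+\infty$ by lower semicontinuity.

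\emph{Step 2: the inner-variation identity and its limit.} By its weak formulation,~$u_k$ is a critical point of~$v\mapsto\F_{\epsilon_k}(v,\Omega)-\int_\Omega f_kv\,dx$ under perturbations supported in~$\Omega$ (the first variation of~$\K(\cdot,\Omega)$ being~$(-\Delta)^s$). Testing against~$X\cdot\nabla u_k$ — legitimate via the fractional Pucci--Serrin identity, which uses only the \emph{interior} regularity of~$u_k$ and is therefore insensitive to the low regularity of the external datum — one obtains, for every~$X\in\cont^1_c(\Omega,\R^n)$,
\begin{equation*}
\delta\K(u_k,\Omega)[X]+\epsilon_k^{-2s}\int_\Omega W(u_k)\,\operatorname{div}X\,dx=\int_\Omega u_k\,\operatorname{div}(f_kX)\,dx .
\end{equation*}
On the right-hand side,~$u_k\to u$ in~$L^1(\Omega)$ with~$\norm{u_k}_{L^\infty(\Omega)}\le M$ gives~$u_k\to u$ in every~$L^r(\Omega)$,~$r<\infty$, while~$f_k\rightharpoonup f$ in~$W^{1,q}(\Omega)$ gives~$f_k\to f$ in~$L^q(\Omega)$ (Rellich) and~$\nabla f_k\rightharpoonup\nabla f$ in~$L^q$; writing~$\operatorname{div}(f_kX)=\nabla f_k\cdot X+f_k\operatorname{div}X$, a strong--weak pairing yields
\begin{equation*}
\int_\Omega u_k\,\operatorname{div}(f_kX)\,dx\ \longrightarrow\ \int_\Omega u\,\operatorname{div}(fX)\,dx=2\int_{E\cap\Omega}\operatorname{div}(fX)\,dx ,
\end{equation*}
where we used~$u=2\chi_E-1$ in~$\Omega$ and~$\int_\Omega\operatorname{div}(fX)\,dx=0$ (the support of~$X$ being compact in~$\Omega$). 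On the left-hand side one needs the rescaled potential energy to be negligible,~$\epsilon_k^{-2s}\int_\Omega W(u_k)\,dx\to0$ (so the middle term disappears), and the convergence of the first variations~$\delta\K(u_k,\Omega)[X]\to\delta\K(u,\Omega)[X]$. Granting these, one gets~$\delta\K(u,\Omega)[X]=\kappa\int_{E\cap\Omega}\operatorname{div}(fX)\,dx$ with~$\kappa=2$, the constant produced by the normalizations adopted here (which in particular depends only on~$n$ and~$s$).

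\emph{Main obstacle.} The two delicate points are precisely where~\cite[Theorem~5.1]{MR3900821} cannot be quoted verbatim. The vanishing~$\epsilon_k^{-2s}\int_\Omega W(u_k)\,dx\to0$ is immediate for minimizers (combining~$\Gamma$-convergence with minimality, as in the proof of Theorem~\ref{th::min_conv}); here~$u_k$ are only critical points, so it must be extracted from the equation, from the pointwise limit~$u_k\to\pm1$ a.e.\ in~$\Omega$, and from interior clearing-out/energy estimates valid for~$s\in(0,1/2)$. The convergence~$\delta\K(u_k,\Omega)[X]\to\delta\K(u,\Omega)[X]$ is where the general external datum~$g_k$ — as opposed to the signed characteristic functions treated in~\cite{MR3900821}, which guaranteed global regularity there — makes the difference: using the explicit kernel representation of~$\delta\K$, the interaction with~$\Omega^c$ only involves points at positive distance from the support of~$X$, hence passes to the limit by dominated convergence (via~$g_k\to g$ in~$L^1_{\loc}(\Omega^c)$ and the integrable decay of the kernel at infinity), while for the part local in~$\Omega$ one relies on the cancellation in the first variation of the fractional perimeter together with the uniform Gagliardo bound~$\K(u_k,\Omega)\le C$ to secure equi-integrability near the diagonal. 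These purely interior estimates in~$\Omega$ are the ``subtle refinement'' of~\cite{MR3900821} referred to in the statement.
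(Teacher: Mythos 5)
Your proposal takes a genuinely different route from the paper: you try to re-derive the convergence of critical points directly (inner-variation identity for each $u_k$, then passage to the limit), whereas the paper never touches that machinery. The paper's proof is a pure reduction to~\cite[Theorem~5.1]{MR3900821}: since $g_k$ is Lipschitz on the collar $\Omega_\eta\setminus\Omega$, one replaces it by a globally Lipschitz $\tilde g_k$ agreeing with $g_k$ on $\Omega_\eta$; the modified function $\tilde u_k$ then solves the same equation with right-hand side $f_k+h_k$, where $h_k(x)=2\int_{\Omega_\eta^c}(g_k(y)-\tilde g_k(y))|x-y|^{-n-2s}dy$ is uniformly bounded in $W^{1,\infty}(\Omega)$ and converges weakly to $0$ in $W^{1,q}(\Omega)$ (the kernel is non-singular because the modification happens at distance $\ge\eta$ from $\Omega$); the energy hypothesis is preserved up to an additive constant for the same reason; and the cited theorem applies verbatim to $\tilde u_k$. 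All the hard analysis is thus delegated to the black box.

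The gap in your argument is that the two points you yourself flag as "delicate" --- (a) $\epsilon_k^{-2s}\int_\Omega W(u_k)\,dx\to0$ for critical points (not minimizers), and (b) $\delta\K(u_k,\Omega)[X]\to\delta\K(u,\Omega)[X]$ --- are precisely the main technical content of~\cite[Theorem~5.1]{MR3900821}, and your proposal does not prove either of them. For (a), for mere critical points this cannot be "extracted from the equation and the pointwise limit" in a few lines; in~\cite{MR3900821} it rests on convergence of the energy densities as measures, obtained through monotonicity formulas and clearing-out lemmas. For (b), the claim that the uniform bound $\K(u_k,\Omega)\le C$ "secures equi-integrability near the diagonal" is not correct as stated: a uniform bound on the (positive) Gagliardo energy densities gives only weak-$*$ compactness of measures, not equi-integrability, and $\delta\K(u_k,\Omega)[X]$ is a quadratic functional of $u_k$ tested against a kernel of the same singularity, so its convergence requires ruling out energy concentration --- again the core of the cited theorem. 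As written, Step~2 of your argument is a plan for re-proving~\cite[Theorem~5.1]{MR3900821} with the essential estimates missing, rather than a proof. (A minor additional point: your normalization $\kappa=2$ is not justified and is not needed; the statement only asks for some $\kappa=\kappa(n,s)>0$, which in the correct accounting comes from the constant relating $\K$ to the limit of the energy densities.)
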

	
	\begin{rem} Theorem~\ref{th::millot_sire_wang} is a refinement
		of~\cite[Theorem~5.1]{MR3900821}. This refinement is needed since, in our setting, one cannot expect a solution~$u_\epsilon$ of~\eqref{eq::prescribed_mass_AC} to be in~$\cont^{0,1}_{\loc}(\R^n)$
		(and therefore the statement of~\cite[Theorem~5.1]{MR3900821}, which relies
		on such an assumption, would not suffice for our purposes). 
		
		Indeed, by Formula~\eqref{eq::u_def_outside_omega}
		in Theorem~\ref{th::u_def_outside_omega}, we have that~$u_\epsilon\in\cont^\infty(\Omega^c) $, and
		\begin{equation} \label{eq::u_dervative_Omega_c}
			\begin{split}
				\partial_ju_\epsilon(x) &= \frac{(n+2s)}{\displaystyle\left(	\int_\Omega \frac{dy}{|x-y|^{n+2s}} \right)^2} \int_\Omega \frac{u_\epsilon(y)}{|x-y|^{n+2s}}dy\int_\Omega \frac{(x_j-y_j)}{|x-y|^{n+2s+2}}dy\\
				&\qquad- \frac{(n+2s)x_j}{\displaystyle\int_\Omega \frac{dy}{|x-y|^{n+2s}}}\int_\Omega \frac{u_\epsilon(y)(x_j-y_j)}{|x-y|^{n+2s+2}}dy,
			\end{split}
		\end{equation}
		for a.e.~$x\in\Omega^c$, for every~$j\in\{1,\dots,n\}$.
		
		However, \eqref{eq::u_dervative_Omega_c} may be not well defined when~$x$ approaches~$\partial\Omega$.
	\end{rem}
	
	As a  consequence of Theorem~\ref{th::millot_sire_wang}, we obtain that a sequence~$\{u_\epsilon\}_\epsilon$ of critical points for~$\G_\epsilon$ converges (up to a subsequence) to a critical point~$u$ of the limit functional~$\G$ in~$L^1_{\loc}(\R^n)$. Moreover, $u=\chi_E-\chi_{E^c}$ in~$\Omega$, for some set~$E$, and the couple~$(E,u|_{\Omega^c})$ has constant non-local hybrid mean curvature. 
	
	\begin{theorem}[Hybrid mean curvature of the phase interface under a mass constraint] \label{th::conv_crit_points}
		Let~$\{u_\epsilon\}_\epsilon$ be a sequence in~$Z_{M,m}$ such that~$u_\epsilon$ is
		a critical point for~$\G_\epsilon$ in~$\Omega$, i.e.~$u_\epsilon$ is a weak solution for~\eqref{eq::inhomog_mass_AC}.
		
		Then, there exist a convergent subsequence and a function~$u$ and a set~$E$ such that~$u_\epsilon\to u$ in~$L^1_{\loc}(\R^n)$, $u=\chi_E-\chi_{E^c}$ in~$\Omega$, and~$u$ is a critical point for the functional~$\G$ in~$K$, for any Lipschitz set~$K\comp\Omega$.
		
		In particular, the couple~$(E,u|_{\Omega^c})$ has non-local hybrid mean curvature~$\kappa\mu$ in~$K$, where~$\kappa$ is as in Theorem~\ref{th::millot_sire_wang}, and~$\mu$ is as in~\eqref{eq::mu_lagr_mult}.
	\end{theorem}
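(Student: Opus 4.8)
The plan is to deduce Theorem~\ref{th::conv_crit_points} from Theorem~\ref{th::millot_sire_wang}, applied not on $\Omega$ but on each Lipschitz set $K\comp\Omega$ separately; working on such a $K$ is what makes the hypotheses on the exterior datum verifiable, since near $\partial\Omega$ one only controls $u_\epsilon$ through the possibly singular formula~\eqref{eq::u_dervative_Omega_c}, whereas near $\partial K$ one can invoke interior regularity inside~$\Omega$. Throughout, $s\in(0,1/2)$, the range in which Definition~\ref{def::hybrid_curv} and the constant $\kappa$ of Theorem~\ref{th::millot_sire_wang} are defined.

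First I would recast the equation. Being a critical point of $\G_\epsilon$, the function $u_\epsilon$ solves~\eqref{eq::inhomog_mass_AC}; dividing by $\epsilon^{2s}$ and using that $\kappa_\epsilon=\epsilon^{-2s}$ for $s\in(0,1/2)$ (see~\eqref{kappaepsilondef}) and that $\lambda_\epsilon=\mu_\epsilon/\kappa_\epsilon=\epsilon^{2s}\mu_\epsilon$, this becomes
\[
(-\Delta)^s u_\epsilon+\epsilon^{-2s}W'(u_\epsilon)=f_\epsilon\quad\text{in }\Omega,\qquad f_\epsilon:=-\mu_\epsilon.
\]
Since the relevant $u_\epsilon$ are the minimizers of $\F_\epsilon$ under the mass constraint for which $\G_\epsilon$ and~\eqref{eq::mu_lagr_mult} were introduced, Theorem~\ref{th::energy_uniform_estimate} gives $\sup_\epsilon\F_\epsilon(u_\epsilon,\Omega)<+\infty$, Theorem~\ref{th::energy_continuity} gives, up to a subsequence, $u_\epsilon\to u$ in $L^1_{\loc}(\R^n)$ with $u=\chi_E-\chi_{E^c}$ in $\Omega$, and Theorem~\ref{th::multiplier_bound} gives $\mu_\epsilon\to\mu$ up to a further subsequence. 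Hence $\{f_\epsilon\}_\epsilon$ is a sequence of constants with $f_\epsilon\to f:=-\mu$, and it satisfies the requirements on the forcing term in Theorem~\ref{th::millot_sire_wang} trivially ($\epsilon^{2s}\norm{f_\epsilon}_{L^\infty}=\epsilon^{2s}|\mu_\epsilon|\to0$ and $f_\epsilon\to f$ strongly in $W^{1,q}$ for every admissible exponent).

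Next I would fix a Lipschitz set $K\comp\Omega$ and $\eta>0$ so small that $K_\eta:=\{\,x:\operatorname{dist}(x,K)<\eta\,\}\comp\Omega$, and apply Theorem~\ref{th::millot_sire_wang} with $\Omega$ there replaced by $K$, $\epsilon_k$ by $\epsilon$, $u_k$ by $u_\epsilon$, $f_k$ by $f_\epsilon$, and $g_k$ by $g_\epsilon:=u_\epsilon|_{K^c}$. The hypotheses are checked as follows. Interior regularity for~\eqref{eq::inhomog_mass_AC} (bounded right-hand side, $W\in\cont^2$, bootstrapped) gives $u_\epsilon\in\cont^{1,\alpha}_{\loc}(\Omega)$ for some $\alpha>0$, hence $g_\epsilon\in\cont^{0,1}(K_\eta\setminus K)$. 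The non-local Neumann condition~\eqref{eq::u_def_outside_omega}, which holds for $u_\epsilon$ by Theorem~\ref{th::u_def_outside_omega} (equivalently, because $u_\epsilon$ is stationary also with respect to perturbations supported in $\Omega^c$), yields $\norm{u_\epsilon}_{L^\infty(\Omega^c)}\le\norm{u_\epsilon}_{L^\infty(\Omega)}\le M$, whence $\sup_\epsilon\norm{g_\epsilon}_{L^\infty(K^c)}\le M$; it also gives $g_\epsilon\to u|_{K^c}$ in $L^1_{\loc}(K^c)$, since on $\Omega\setminus K$ this is the known $L^1(\Omega)$-convergence, while on $\Omega^c$ it follows because, for $x$ in a compact subset of $\Omega^c$, the value $u_\epsilon(x)$ depends on $u_\epsilon|_\Omega\in L^1(\Omega)$ through~\eqref{eq::u_def_outside_omega} continuously and uniformly (the kernel $|x-y|^{-n-2s}$ being bounded above on $\Omega$ with $\int_\Omega|x-y|^{-n-2s}dy$ bounded below). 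Moreover $u_\epsilon\in H^s(K)\cap L^\infty(K)$ because $\F_\epsilon(u_\epsilon,\Omega)<+\infty$, and the energy bound~\eqref{mnbvcxz21345678jhgfdsurieowjdh} with $\Omega$ replaced by $K$ holds because $\F_\epsilon(u_\epsilon,K)\le\F_\epsilon(u_\epsilon,\Omega)\le C$ (both $\K(\cdot,K)\le\K(\cdot,\Omega)$ and $\int_KW(u_\epsilon)\le\int_\Omega W(u_\epsilon)$), while $\big|\int_K f_\epsilon u_\epsilon\,dx\big|=|\mu_\epsilon|\,\big|\int_K u_\epsilon\,dx\big|\le|\mu_\epsilon|\,M\,|K|$ stays bounded.

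Theorem~\ref{th::millot_sire_wang} then yields, consistently with the subsequence already fixed, that $u=\chi_E-\chi_{E^c}$ in $K$ and
\[
\delta\K(u,K)[X]=\kappa\int_{E\cap K}\operatorname{div}(fX)\,dx=-\kappa\mu\int_{E\cap K}\operatorname{div}(X)\,dx\qquad\text{for all }X\in\cont^1_c(K,\R^n),
\]
having used that $f\equiv-\mu$ is constant. Since $X$ is compactly supported in $K$, the inner variation of $v\mapsto\int_K v\,dx$ at $u$ equals $\int_K u\operatorname{div}(X)\,dx=2\int_{E\cap K}\operatorname{div}(X)\,dx$, so the displayed identity is exactly the stationarity $\delta\G(u,K)[X]=0$ (recall $\G=\F+\mu\int_\Omega(\cdot)\,dx$ and $\F(u,K)=\K(u,K)$ for $s\in(0,1/2)$), i.e.\ $u$ is a critical point of $\G$ in $K$; equivalently, with the normalization of Definition~\ref{def::hybrid_curv}, the couple $(E,u|_{\Omega^c})$ has constant non-local hybrid mean curvature $\kappa\mu$ in $K$. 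As this relation involves only the limit objects $u$, $E$, $\mu$ (not the subsequence) and $K\comp\Omega$ was an arbitrary Lipschitz set, it holds for all such $K$ along the single subsequence fixed above, completing the argument. The step I expect to be the main obstacle is the verification of the exterior hypotheses: the uniform $L^\infty$-bound and the $L^1_{\loc}$-convergence of $u_\epsilon$ outside $\Omega$ rest entirely on the non-local Neumann condition~\eqref{eq::u_def_outside_omega}, and the need for Lipschitz regularity of the exterior datum near $\partial K$ is precisely what forces one to work on $K\comp\Omega$ rather than on $\Omega$, since near $\partial\Omega$ the derivative~\eqref{eq::u_dervative_Omega_c} may blow up --- which is also the very reason the refinement of Theorem~\ref{th::millot_sire_wang} is needed.
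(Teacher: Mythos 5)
Your proposal is correct and follows essentially the same route as the paper: restrict to a Lipschitz $K\comp\Omega$ with $K_\eta\subset\Omega$, use interior regularity for~\eqref{eq::inhomog_mass_AC} to make the exterior datum $g_\epsilon:=u_\epsilon|_{K^c}$ Lipschitz near $\partial K$, take the constant forcing term $f_\epsilon:=-\mu_\epsilon$, verify the uniform bounds via Theorems~\ref{th::energy_uniform_estimate}, \ref{th::energy_continuity}, \ref{th::u_def_outside_omega} and~\ref{th::multiplier_bound}, and apply Theorem~\ref{th::millot_sire_wang} on $K$. Your additional explicit verifications (the exterior $L^\infty$-bound and $L^1_{\loc}$-convergence from the non-local Neumann condition, and the localized energy bound) and the closing identification with stationarity of $\G$ are consistent with, and slightly more detailed than, the paper's argument.
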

	
	\begin{rem} Notice that, in light of~\eqref{eq::u_def_outside_omega}, the function~$u$ coming from Theorem~\ref{th::conv_crit_points} is continuous across the boundary of~$\Omega$. Indeed, for every~$x_0\in E\cap\partial\Omega$,
		$$\lim_{\substack{x\to x_0 \\ x\in E\cap \Omega^c}}\frac{\displaystyle\int_\Omega\frac{\chi_E(y)-\chi_{E^c}(y)}{|x-y|^{n+2s}dy}}{\displaystyle\int_\Omega\frac{dy}{|x-y|^{n+2s}}} 
		= \lim_{\substack{x\to x_0 \\ x\in E\cap \Omega^c}} \frac{\displaystyle\int_{\Omega\cap E}\frac{dy}{|x-y|^{n+2s}}-\int_{\Omega\cap E^c}\frac{dy}{|x-y|^{n+2s}}}{\displaystyle\int_{\Omega\cap E}\frac{dy}{|x-y|^{n+2s}}+\int_{\Omega\cap E^c}\frac{dy}{|x-y|^{n+2s}}} = 1,$$
		and for every~$x_0\in E^c\cap\partial\Omega$,
		$$\lim_{\substack{x\to x_0 \\ x\in E^c\cap \Omega^c}}\frac{\displaystyle\int_\Omega\frac{\chi_E(y)-\chi_{E^c}(y)}{|x-y|^{n+2s}}dy}{\displaystyle\int_\Omega\frac{dy}{|x-y|^{n+2s}}} 
		= \lim_{\substack{x\to x_0 \\ x\in E^c\cap \Omega^c}} \frac{\displaystyle\int_{\Omega\cap E}\frac{dy}{|x-y|^{n+2s}}-\int_{\Omega\cap E^c}\frac{dy}{|x-y|^{n+2s}}}{\displaystyle\int_{\Omega\cap E}\frac{dy}{|x-y|^{n+2s}}+\int_{\Omega\cap E^c}\frac{dy}{|x-y|^{n+2s}}} = -1.$$
		On the other hand, we infer from~\eqref{eq::u_dervative_Omega_c} that~$u$ is not~$\cont^1$ across~$\partial\Omega$.
		
		Therefore, non-local phase interfaces subject to a mass constraint show 
		a boundary behavior of type ``$\cont^0$  but not~$\cont^1$''.
		
		This boundary behavior 
		is very different from that of~$s$-minimal graphs, which can only be either discontinuous or~$\cont^1$ at the boundary according to~\cite[Theorem~1.2]{MR4104542}.
	\end{rem}
	
	The classical analog of Theorem~\ref{th::conv_crit_points} has been presented in~\cite{luckhaus_modica}, where the authors proved a correspondence between the mean curvature of the limit set~$E$ and the limit of the Lagrange multiplier associated with the mass constraint.
	
	However, to the best of our knowledge, no result on the convergence for critical points of the non-local Allen-Cahn Energy is yet known when~$s\geq1/2$.
	
	The proof of Theorem~\ref{th::millot_sire_wang} is contained in Section~\ref{sec::millot_sire_wang}. Then, we conclude our discussion proving Theorem~\ref{th::conv_crit_points} in Section~\ref{sec::conv_crit_points}.

	
	\section[Proof of Theorem~\ref{th::gamma_conv_J}	\\ {\footnotesize$\Gamma$-convergence of the non-local Massari Problem}]{Proof of Theorem~\ref{th::gamma_conv_J} - $\Gamma$-convergence of the non-local Massari Problem}\label{sec::proof_gamma_conv_J}
	
	This section is devoted to the proof of Theorem~\ref{th::gamma_conv_J}. This proof is a direct consequence of the forthcoming Proposition~\ref{prop::L2_cont_conv} and~\cite[Theorem~2]{ambrosio_dephilippis_martinazzi}.
	
	\begin{prop} \label{prop::L2_cont_conv}
		Let~$\{f_j\}_j$, $\{g_j\}_j\subset L^\infty(\R^n)$ be such that 
		\begin{equation*}
			\sup_j\norm{f_j}_{L^\infty(\R^n)}<+\infty\qquad
			\text{and}\qquad\sup_j\norm{g_j}_{L^\infty(\R^n)}<+\infty.
		\end{equation*}
		Suppose that there exist functions~$f$, $g$ such that~$f_j\to f$ and~$g_j\to g$ in~$L^1_{\loc}(\R^n)$ as~$j\to+\infty$. 
		
		Then,
		\begin{equation} \label{eq::L2_cont_conv}
			\lim_{j\to+\infty}\int_K f_j g_j dx = \int_K f\,g\,dx,
		\end{equation}
		for every compact set~$K\subset\R^n$.
	\end{prop}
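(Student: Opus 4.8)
The plan is to reduce the statement to the elementary continuity of the bilinear pairing~$L^\infty\times L^1$, obtained by splitting~$f_jg_j-fg$ into two pieces, each controlled by one uniform~$L^\infty$ bound times one~$L^1$ error on~$K$.

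First, I would record that the limit functions inherit the uniform bounds. Since~$g_j\to g$ in~$L^1_{\loc}(\R^n)$, a subsequence converges almost everywhere, hence~$|g|\le \sup_j\norm{g_j}_{L^\infty(\R^n)}=:C_g<+\infty$ almost everywhere; likewise~$|f|\le C_f:=\sup_j\norm{f_j}_{L^\infty(\R^n)}<+\infty$ almost everywhere. In particular~$f,g\in L^\infty(\R^n)$, so that~$fg\in L^1(K)$ for every compact~$K\subset\R^n$ and the right-hand side of~\eqref{eq::L2_cont_conv} is well defined. Then, fixing such a~$K$, for every~$j$ I would write
$$ \int_K f_jg_j\,dx-\int_K fg\,dx = \int_K f_j\,(g_j-g)\,dx + \int_K (f_j-f)\,g\,dx ,$$
and estimate the two summands separately, pulling out the~$L^\infty$ norm of one factor in each:
$$ \left|\int_K f_jg_j\,dx-\int_K fg\,dx\right| \le C_f\,\norm{g_j-g}_{L^1(K)} + C_g\,\norm{f_j-f}_{L^1(K)} .$$
Since~$K$ is compact and~$f_j\to f$, $g_j\to g$ in~$L^1_{\loc}(\R^n)$, both terms on the right vanish as~$j\to+\infty$, giving~\eqref{eq::L2_cont_conv}.

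There is no substantial obstacle here: the only step deserving a moment's care is the preliminary observation that the uniform~$L^\infty$ bounds pass to the~$L^1_{\loc}$-limit, so that in each term of the splitting one of the two factors may be estimated in~$L^\infty$; this is immediate from almost-everywhere convergence along a subsequence together with uniqueness of the~$L^1_{\loc}$-limit. Everything else is the standard product-convergence argument.
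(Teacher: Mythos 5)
Your proof is correct and follows essentially the same route as the paper: the standard splitting of~$f_jg_j-fg$ into two terms, each bounded by a uniform~$L^\infty$ norm times an~$L^1(K)$ error (the paper merely places the~$L^\infty$ bound on the other factor in each term). Your preliminary justification that~$f,g\in L^\infty(\R^n)$ via almost-everywhere convergence of a subsequence is a correct expansion of a step the paper simply asserts.
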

	
	\begin{proof}
		Let~$K\subset\R^n$ be a compact set, and let~$f_j$, $g_j$, $f$, and~$g$ be as in the assumptions. First, notice that~$f$, $g\in L^\infty(\R^n)$.
		
		Then, the claim follows immediately from the estimate
		\begin{equation*}
			\left|\int_K (f_jg_j-fg)dx\right| \leq \norm{f}_{L^\infty(K)}\norm{g-g_j}_{L^1(K)} + \sup_j\norm{g_j}_{L^\infty(K)}\norm{f-f_j}_{L^1(K)}.\qedhere
		\end{equation*}
	\end{proof}
	
	As an easy consequence of Proposition~\ref{prop::L2_cont_conv}, we obtain that if~$H_j\to H$ in~$L^1(\Omega)$, as in Theorem~\ref{th::gamma_conv_J}, then
	\begin{equation*}
		\lim_{j\to+\infty} \int_{\Omega\cap F} H_jdx =  \int_{\Omega\cap F} H\,dx,
	\end{equation*}
	for every set~$F\subseteq\R^n$.	Thus, in light of~\cite[Remark~4.8]{dal_maso}, we infer the following:
	
	\begin{cor} \label{cor::H_cont_conv}
		For every set~$F$, define the functionals
		\begin{align*}
			&\mathscr{H}_j(F) := \int_{\Omega\cap F} H_jdx,\quad \text{for all } j\in\N,\\
			\text{and}\quad &\mathscr{H}(F) := \int_{\Omega\cap F} H\,dx.
		\end{align*}
		
		Then, $\mathscr{H}_j$ is pointwise convergent in~$L^1_{\loc}(\R^n)$ to~$\mathscr{H}$ in the sense of~\cite[Definition~4.7]{dal_maso}. 
	\end{cor}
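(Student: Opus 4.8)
The plan is to obtain Corollary~\ref{cor::H_cont_conv} as an immediate application of Proposition~\ref{prop::L2_cont_conv}, taking the second sequence to be constant. First I would extend every~$H_j$, as well as~$H$, by zero on~$\Omega^c$; the extended functions lie in~$L^\infty(\R^n)$ with~$\sup_j\norm{H_j}_{L^\infty(\R^n)}=\sup_j\norm{H_j}_{L^\infty(\Omega)}<+\infty$, and~$H_j\to H$ in~$L^1_{\loc}(\R^n)$, because~$H_j\to H$ in~$L^1(\Omega)$ and all these functions vanish outside~$\Omega$.

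Now fix an arbitrary (measurable) set~$F\subseteq\R^n$ and apply Proposition~\ref{prop::L2_cont_conv} with~$f_j:=H_j$, $g_j:=\chi_F$, $f:=H$, $g:=\chi_F$: the hypotheses hold since~$f_j\to f$ in~$L^1_{\loc}(\R^n)$ by the previous step, the sequence~$g_j$ is constant (hence trivially convergent, with~$\norm{g_j}_{L^\infty(\R^n)}\le1$), and~$K:=\overline{\Omega}$ is compact as~$\Omega$ is bounded. This yields
\begin{equation*}
	\mathscr{H}_j(F)=\int_{\Omega\cap F}H_j\,dx=\int_{\overline{\Omega}}f_jg_j\,dx\xrightarrow[j\to+\infty]{}\int_{\overline{\Omega}}fg\,dx=\int_{\Omega\cap F}H\,dx=\mathscr{H}(F),
\end{equation*}
where we used that~$f_j$ and~$f$ vanish on~$\Omega^c$, so that the behavior on~$\partial\Omega$ (a null set) is irrelevant. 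Since~$F$ was arbitrary, this is precisely the statement that~$\mathscr{H}_j\to\mathscr{H}$ pointwise on the space of measurable sets endowed with the~$L^1_{\loc}(\R^n)$-topology, in the sense of~\cite[Definition~4.7]{dal_maso}, which is the claim. I would also note that Proposition~\ref{prop::L2_cont_conv} in fact delivers the stronger continuous convergence~$\mathscr{H}_j(F_j)\to\mathscr{H}(F)$ whenever~$F_j\to F$ in~$L^1_{\loc}(\R^n)$ (just take~$g_j:=\chi_{F_j}$), which is exactly what~\cite[Remark~4.8]{dal_maso} requires in order to add~$\mathscr{H}$ to the~$\Gamma$-limit of the rescaled~$s$-perimeters provided by~\cite[Theorem~2]{ambrosio_dephilippis_martinazzi}.

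There is no genuine obstacle in this argument, as all the analytic content is already encapsulated in Proposition~\ref{prop::L2_cont_conv}; the only points deserving a word of care are the zero-extension that upgrades~$L^1(\Omega)$-convergence of~$\{H_j\}_j$ to~$L^1_{\loc}(\R^n)$-convergence, and the verification that~$\mathscr{H}_j$ and~$\mathscr{H}$ are well defined on~$L^1$-equivalence classes of sets — which is clear, since modifying~$F$ by a Lebesgue-null set leaves~$\int_{\Omega\cap F}H_j\,dx$ unchanged.
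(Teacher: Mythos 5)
Your argument is correct and is essentially the paper's own: the paper likewise deduces the pointwise convergence $\int_{\Omega\cap F}H_j\,dx\to\int_{\Omega\cap F}H\,dx$ for every set $F$ directly from Proposition~\ref{prop::L2_cont_conv} (with the zero-extension of $H_j$ and the characteristic function of $F$ as the two factors) and then invokes \cite[Remark~4.8]{dal_maso}. Your closing observation that taking $g_j:=\chi_{F_j}$ yields the continuous convergence $\mathscr{H}_j(F_j)\to\mathscr{H}(F)$ is exactly the point that Remark~4.8 supplies and that is needed downstream, so nothing is missing.
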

	
	Finally, Corollary~\ref{cor::H_cont_conv}, together with~\cite[Theorem~2]{ambrosio_dephilippis_martinazzi} and~\cite[Theorem~6.20]{dal_maso}, entails Theorem~\ref{th::gamma_conv_J}.
	
	
	\section[Proof of Theorem~\ref{th::min_J_conv}	\\ {\footnotesize Energy bounds and limit stability of minimizers for the non-local Massari Problem}]{Proof of Theorem~\ref{th::min_J_conv}	- Energy bounds and limit stability of minimizers for the non-local Massari Problem}\label{sec::proof_min_J_conv}
	
	In this section, we present the proof of Theorem~\ref{th::min_J_conv} about the convergence of minimizers of the Massari functionals~$\{\mathscr{J}_{s_j,\Omega}^{H_j}\}_j$. To do this, we follow some ideas introduced in~\cite[Section~4]{ambrosio_dephilippis_martinazzi}, also taking into account the extra term related to the non-local mean curvature.
	
	To lighten the notation, we define, for every set~$F$, the functionals
	\begin{equation*}
		\begin{split}
			& \mathscr{S}^1_s(F,\Omega) := \mathcal{L}(F\cap\Omega,F^c\cap\Omega)\\
			\text{and }\quad &\mathscr{S}^2_s(F,\Omega) := \mathcal{L}(F\cap\Omega,F^c\cap\Omega^c)+ \mathcal{L}(F\cap\Omega^c,F^c\cap\Omega),
		\end{split}
	\end{equation*}
	so that 
	\begin{equation*}
		\Per_s(F,\Omega) = \mathscr{S}^1_s(F,\Omega) + \mathscr{S}^2_s(F,\Omega).
	\end{equation*}
	
	\begin{proof} [Proof of Theorem~\ref{th::min_J_conv}]
		To prove~\eqref{eq::J_uniform_bound}, let us define, for any~$\delta>0$, the set
		\begin{equation*}
			\Omega_{-\delta} := \{x\in\Omega \text{ s.t. } {\rm{dist}}(x,\partial \Omega) <\delta \}
		\end{equation*}
		and consider~$F_j:=E_j\cap(\Omega^c\cup\Omega_{-\delta})$.
		
		We observe that
		\begin{equation*}\begin{split}&
				\Per_{s_j}(E_j,\Omega)-	\Per_{s_j}(E_j,\Omega\setminus\Omega_{-\delta} )
				\\&\quad=\mathcal{L}(E_j\cap\Omega_{-\delta},E_j^c\cap\Omega_{-\delta})
				+\mathcal{L}(E_j\cap\Omega^c,E_j^c\cap\Omega_{-\delta})
				\ge \mathscr{S}^1_{s_j}(E_j,\Omega_{-\delta} ).
		\end{split}\end{equation*}		
		Also, we recall from~\cite{MR5015001} that if~$E_j$ is minimizer for the functional~$\mathscr{J}_{s_j,\Omega}^{H_j}$, then it is a~$\Lambda_j$-minimal set, with~$\Lambda_j:=\norm{H_j}_{L^\infty}$. Thus, we have
		\begin{equation}\label{fhejasfj098765vdfkvl09876}
			\begin{split}
				&\limsup_{j\to+\infty} (1-2s_j)\Per_{s_j}(E_j,\Omega\setminus\Omega_{-\delta} ) \\
				& \qquad\leq \limsup_{j\to+\infty} (1-2s_j)\left(\Per_{s_j}(E_j,\Omega)-\mathscr{S}^1_{s_j}(E_j,\Omega_{-\delta} )\right) \\
				& \qquad\leq  \limsup_{j\to+\infty} (1-2s_j)\left(\Per_{s_j}(F_j,\Omega)+ \Lambda_j|\Omega_{-\delta}|-\mathscr{S}^1_{s_j}(E_j,\Omega_{-\delta} )\right) \\
				& \qquad=  \limsup_{j\to+\infty} (1-2s_j)\left(\mathscr{S}^1_{s_j}(F_j,\Omega)+\mathscr{S}^2_{s_j}(F_j,\Omega)-\mathscr{S}^1_{s_j}(E_j,\Omega_{-\delta} )\right) + (1-2s_j)\Lambda_j|\Omega_{-\delta}| . 
			\end{split}
		\end{equation}
		
		Now, since~$F_j\cap(\Omega\setminus\Omega_{-\delta})=\varnothing$, we see that
		\begin{eqnarray*}
			&&\mathscr{S}^1_{s_j}(F_j,\Omega)-\mathscr{S}^1_{s_j}(E_j,\Omega_{-\delta} )\\&=&
			\mathcal{L}(F_j\cap\Omega,F_j^c\cap\Omega)
			-\mathcal{L}(E_j\cap\Omega_{-\delta},E_j^c\cap\Omega_{-\delta})\\&=&
			\mathcal{L}(E_j\cap\Omega_{-\delta}, \Omega\setminus\Omega_{-\delta})
			+\mathcal{L}(E_j\cap\Omega_{-\delta},E_j^c\cap\Omega_{-\delta})
			-\mathcal{L}(E_j\cap\Omega_{-\delta},E_j^c\cap\Omega_{-\delta})\\&=&
			\mathcal{L}(E_j\cap\Omega_{-\delta}, \Omega\setminus\Omega_{-\delta})\\
			&\le&\mathcal{L}(\Omega_{-\delta}, \Omega\setminus\Omega_{-\delta}),
		\end{eqnarray*}	
		and therefore we can exploit~\cite[Proposition~16]{ambrosio_dephilippis_martinazzi}
		and obtain that
		\begin{equation} \label{eq::limsup_bound_1}\begin{split}
				\limsup_{j\to+\infty} (1-2s_j)\left(\mathscr{S}^1_{s_j}(F_j,\Omega)-\mathscr{S}^1_{s_j}(E_j,\Omega_{-\delta} )\right)& \leq 
				\limsup_{j\to+\infty} (1-2s_j)\mathcal{L}(\Omega_{-\delta}, \Omega\setminus\Omega_{-\delta})\\&\le
				\frac{n\omega_n}{2}\Per(\Omega\setminus\Omega_{-\delta},\R^n) .\end{split}\end{equation}
		
		Similarly,
		\begin{eqnarray*}
			\mathscr{S}^2_{s_j}(F_j,\Omega)
			&=&\mathcal{L}(F_j\cap\Omega,F^c_j\cap\Omega^c)+ \mathcal{L}(F_j\cap\Omega^c,F_j^c\cap\Omega)\\
			&\leq&2\mathcal{L}(\Omega, \Omega^c)
		\end{eqnarray*} and thus
		\begin{equation}\label{976543vdshkasfgwegui}
			\limsup_{j\to+\infty} (1-2s_j)\mathscr{S}^2_{s_j}(F_j,\Omega) \leq n\omega_n\Per(\Omega,\R^n) .
		\end{equation}
		
		Moreover, we know that
		\begin{equation*}
			\sup_j \Lambda_j=\sup_{j} \norm{H_j}_{L^\infty(\Omega)} <+\infty.
		\end{equation*}
		Using this information, \eqref{eq::limsup_bound_1} and~\eqref{976543vdshkasfgwegui}
		into~\eqref{fhejasfj098765vdfkvl09876} we thereby obtain that
		\begin{equation*}
			\limsup_{j\to+\infty} (1-2s_j)\Per_{s_j}(E_j,\Omega\setminus\Omega_{-\delta} )\le
			\frac{n\omega_n}{2}\Per(\Omega\setminus\Omega_{-\delta},\R^n) +n\omega_n\Per(\Omega,\R^n) .	
		\end{equation*}
		As a consequence, for every~$K\comp\Omega$,
		\begin{equation*}
			\begin{split}
				&\limsup_{j\to+\infty} (1-2s_j)\mathscr{J}_{s_j,K}^{H_j}(E_j) \\
				&\quad\leq \limsup_{j\to+\infty} (1-2s_j)\Per_{s_j}(E_j,K) + \limsup_{j\to+\infty} \int_{K\cap E_j} H_j dx
				\leq C_1  +\norm{H}_{L^1(\Omega)},
			\end{split}
		\end{equation*}
		for some positive constant~$C_1$ depending only on~$n$
		and~$\Omega$, proving~\eqref{eq::J_uniform_bound}.
		
		Now, suppose that~$K\comp\Omega$ is such that~$\Per(E,\partial K)=0$, where~$E$ is as in the assumptions. Let us also adopt the notation
		\begin{equation*}
			\mathcal{M}^n := \{D\subseteq\R^n\text{such that~$D$ is Lebesgue-measurable}\}.
		\end{equation*}
		Then, for every set~$D\in\mathcal{M}^n$ and, for all~$j\in\N$,
		we define the function~$\pi_j:\mathcal{M}^n\to\R$ as
		\begin{equation*}
			\pi_j(D) := (1-2s_j)\, \mathscr{S}^1_{s_j}(E_j,D).
		\end{equation*}	
		Thanks to~\cite[Theorem~21]{ambrosio_dephilippis_martinazzi}, the sequence~$\{\pi_j\}_j$ weakly converges\footnote{For more details about weak convergence and other properties of these kinds of
			set functions (i.e. defined on~$\mathcal{M}^n$), we refer to the Appendix in~\cite{ambrosio_dephilippis_martinazzi}.}
		to a regular, monotone, and super-additive function~$\pi:\mathcal{M}^n\to\R$. 
		
		Now, we focus on proving~\eqref{eq::J_cont_conv_min}. Let us define, for every~$\delta>0$, the set
		\begin{equation*}
			K_\delta := \{x\in\R^n \text{ s.t. } {\rm{dist}}(x,K)<\delta\}.
		\end{equation*}
		Moreover, consider a set~$F$ such that~$E\Delta F\subset K$. Thus, there exists~$\delta>0$ such that~$E\Delta F\comp K_\delta \comp \Omega$. Besides, by~\cite[Theorem~2]{ambrosio_dephilippis_martinazzi}, there exists a sequence of sets~$\{F_j\}_j$ such that
		\begin{equation} \label{eq::exact_conv}
			\begin{split}
				&\lim_{j\to+\infty} \left| (F_j\Delta F)\cap K_\delta \right| = 0\\
				\text{and } \quad&\lim_{j\to+\infty} (1-2s_j)\Per_{s_j}(F_j,K_\delta) = \omega_{n-1}\Per(F,K_\delta).
			\end{split}
		\end{equation}
		Furthermore, thanks to~\cite[Proposition~11]{ambrosio_dephilippis_martinazzi}, for given~$0<\rho<\tau<\delta$, we find sets~$G_j\subseteq\R^n$ such that
		\begin{equation}\label{degfytruytjgnv564832}
			G_j :=
			\begin{cases}
				F_j \quad &\text{in }K_\rho,\\
				E_j \quad &\text{in }K_\tau^c,
			\end{cases}
		\end{equation}
		and with the property that, for every~$\epsilon>0$, 
		\begin{equation} \label{eq::prop_Gj}
			\begin{split}
				\mathscr{S}^1_{s_j}(G_j,K_\delta)
				&\leq \mathscr{S}^1_{s_j}(F_j,K_\delta)+ \mathscr{S}^1_{s_j}(E_j,K_\delta\setminus K_{\rho-\epsilon})  + \frac{C_2}{\epsilon^{n+2s_j}}\\
				&\quad +\frac{C_2}{1-2s_j} \left| (E_j\Delta F_j) \cap (K_\tau\setminus K_\rho)\right| + C_3 \left| (E_j\Delta F_j) \cap K_\delta \right|  ,
			\end{split}		
		\end{equation}
		for some constant~$C_2>0$ independent of~$j$.
		
		We now point out that
		\begin{equation}\label{uweiofgewuo876543}
			\mathscr{S}^2_{s_j}(G_j,K_\delta) = \mathcal{L}(K_\delta\cap G_j,K_\delta^c\cap G_j^c) + \mathcal{L}(K_\delta\cap G_j^c,K_\delta^c\cap G_j).
		\end{equation}
		Also, for every~$\sigma\in(\delta,{\rm{diam}}(\Omega))$
		we have that
		\begin{equation}  \label{eq::estimateS2_1}
			\begin{split}
				&\mathcal{L}(K_\delta\cap G_j,K_\delta^c\cap G_j^c) 
				= \mathcal{L}(K_\tau\cap G_j,K_\delta^c\cap E_j^c) + \mathcal{L}((K_\delta\setminus K_\tau)\cap G_j,K_\delta^c\cap E_j^c)\\
				&\qquad\leq \frac{C_3}{s_j(\delta-\tau)^{2s_j}}|G_j\cap K_\tau| + \mathcal{L}((K_\delta\setminus K_\tau)\cap E_j,K_\sigma^c\cap E_j^c) \\
				&\qquad\qquad + \mathcal{L}((K_\delta\setminus K_\tau)\cap E_j,(K_\sigma\setminus K_\delta)\cap E_j^c) \\
				&\qquad\leq \mathscr{S}^1_{s_j}(E_j,K_\sigma\setminus K_\tau)  + \frac{C_3}{s_j}\left(\frac{1}{(\delta-\tau)^{2s_j}} + \frac{1}{(\sigma-\delta)^{2s_j}}\right),
			\end{split}
		\end{equation}
		for some positive constant~$C_3=C_3(n,K,\tau,\delta)$ possibly changing from line to line.
		
		Similarly, we obtain the estimate 
		\begin{equation}  \label{eq::estimateS2_2}
			\begin{split}
				&\mathcal{L}(K_\delta\cap G_j^c,K_\delta^c\cap G_j)
				= \mathcal{L}(K_\tau\cap G_j^c,K_\delta^c\cap E_j) + \mathcal{L}((K_\delta\setminus K_\tau)\cap G_j^c,K_\delta^c\cap G_j)\\
				&\qquad\leq \frac{C_3}{s_j(\delta-\tau)^{2s_j}}|G_j^c\cap K_\tau| + \mathcal{L}((K_\delta\setminus K_\tau)\cap E_j^c,K_\sigma^c\cap E_j) \\
				&\qquad\qquad + \mathcal{L}((K_\delta\setminus K_\tau)\cap E_j^c,(K_\sigma\setminus K_\delta)\cap E_j) \\
				&\qquad\leq \mathscr{S}^1_{s_j}(E_j,K_\sigma\setminus K_\tau)  + \frac{C_3}{s_j}\left(\frac{1}{(\delta-\tau)^{2s_j}} + \frac{1}{(\sigma-\delta)^{2s_j}}\right).
			\end{split}
		\end{equation}
		Therefore, we are led by~\eqref{uweiofgewuo876543}, 
		\eqref{eq::estimateS2_1} and~\eqref{eq::estimateS2_2} to
		\begin{equation*}
			\begin{split}
				\mathscr{S}^2_{s_j}(G_j,K_\delta) \leq 2 \mathscr{S}^1_{s_j}(E_j,K_\sigma\setminus K_\tau)  + \frac{C_3}{s_j}\left(\frac{1}{(\delta-\tau)^{2s_j}} + \frac{1}{(\sigma-\delta)^{2s_j}}\right),
			\end{split}
		\end{equation*}
		
		It thereby follows that
		\begin{equation*}
			\limsup_{j\to+\infty} (1-2s_j)\mathscr{S}^2_{s_j}(G_j,K_\delta) \leq \limsup_{j\to+\infty} 2(1-2s_j) \mathscr{S}^1_{s_j}(E_j,K_\sigma\setminus K_\tau) .
		\end{equation*}
		{F}rom this and~\eqref{eq::prop_Gj}, we thus obtain that
		\begin{equation}\label{ghewhfsbdvfveuwiyt743694369}
			\begin{split}
				&\liminf_{j\to+\infty} (1-2s_j) \mathscr{J}_{s_j,K_\delta}^{H_j}(G_j) \\
				\leq\;& \liminf_{j\to+\infty} (1-2s_j) \mathscr{S}^1_{s_j}(G_j,K_\delta) + \limsup_{j\to+\infty} (1-2s_j) \mathscr{S}^2_{s_j}(G_j,K_\delta) + \lim_{j\to+\infty} \int_{K_\delta\cap G_j} H_j dx \\
				\le\;&\liminf_{j\to+\infty} (1-2s_j)\left(\mathscr{S}^1_{s_j}(F_j,K_\delta)+ \mathscr{S}^1_{s_j}(E_j,K_\delta\setminus K_{\rho-\epsilon})  + \frac{C_2}{\epsilon^{n+2s_j}}\right.\\
				&\qquad\qquad\qquad \left.+\frac{C_2}{1-2s_j} \left| (E_j\Delta F_j) \cap (K_\tau\setminus K_\rho)\right| + C_3 \left| (E_j\Delta F_j) \cap K_\delta \right| \right)\\&\quad
				+\limsup_{j\to+\infty} 2(1-2s_j) \mathscr{S}^1_{s_j}(E_j,K_\sigma\setminus K_\tau) + \lim_{j\to+\infty} \int_{K_\delta\cap G_j} H_j dx	\\
				\leq \;&\liminf_{j\to+\infty} (1-2s_j) \mathscr{S}^1_{s_j}(F_j,K_\delta)+ 3\limsup_{j\to+\infty} (1-2s_j)\mathscr{S}^1_{s_j}(E_j,K_\sigma\setminus K_{\rho-\epsilon}) \\
				&\quad +C_2 \lim_{j\to+\infty} \left| (E_j\Delta F_j)\cap (K_\tau\setminus K_\rho)\right| \\&\quad+ \lim_{j\to+\infty} \left(\int_{K_\rho\cap F_j} H_j dx + \norm{H_j}_{L^\infty(\Omega)}|K_\delta\setminus K_\rho|\right).
		\end{split}\end{equation}
		
		Now, we observe that, since~$E\Delta F \subset K$, using also that~$E_j\to E$ in~$L^1_{\loc}(\R^n)$ and~\eqref{eq::exact_conv}, we have that 
		\begin{equation*}
			\lim_{j\to+\infty} \left| (E_j\Delta F_j)\cap (K_\tau\setminus K_\rho)\right|  = 0.
		\end{equation*}	
		Moreover, according to~\cite[Proposition~22]{ambrosio_dephilippis_martinazzi}, 
		\begin{equation*}
			\lim_{\substack{\sigma,\rho\to\delta\\ \epsilon\to0}} \limsup_{j\to+\infty} (1-2s_j)\mathscr{S}^1_{s_j}(E_j,K_\sigma\setminus K_{\rho-\epsilon}) = \lim_{\epsilon\to0}\limsup_{j\to+\infty} \pi_j(K_{\delta+\epsilon}\setminus K_{\delta-\epsilon}) = 0 .
		\end{equation*}
		Besides, since the convergence of~$H_j$ to~$H$ is uniform in~$\rho$, from Proposition~\ref{prop::L2_cont_conv} and~\eqref{eq::exact_conv}
		we also deduce that
		\begin{equation*}
			\lim_{\rho\to\delta} \lim_{j\to+\infty} \left[\int_{K_\rho\cap F_j} H_j dx + \norm{H_j}_{L^\infty(\Omega)}|K_\delta\setminus K_\rho|\right] =  \lim_{j\to+\infty} \int_{K_\delta\cap F_j} H_jdx = \int_{K_\delta\cap F} H\,dx.
		\end{equation*}
		Using these pieces of information into~\eqref{ghewhfsbdvfveuwiyt743694369}, and recalling the second limit in~\eqref{eq::exact_conv}, we thus find that	
		\begin{equation}\label{gyrdERTYbkher743dkgtvf76543}\begin{split}
				\liminf_{j\to+\infty} (1-2s_j) \mathscr{J}_{s_j,K_\delta}^{H_j}(G_j)& \leq \lim_{j\to+\infty} \left((1-2s_j) \mathscr{S}^1_{s_j}(F_j,K_\delta) + \int_{K_\delta\cap F_j} H_j dx\right) \\&=\lim_{j\to+\infty} (1-2s_j) \mathscr{S}^1_{s_j}(F_j,K_\delta) + \int_{K_\delta\cap F} H dx \\&=\omega_{n-1}\mathscr{J}_{K_\delta}^H(F) .
		\end{split}\end{equation}
		
		Now we point out that~$G_j$ is a competitor for~$E_j$ (recall the definition in~\eqref{degfytruytjgnv564832}), and therefore, by~\eqref{gyrdERTYbkher743dkgtvf76543} and the minimality of~$E_j$,
		\begin{equation*}
			\liminf_{j\to+\infty} (1-2s_j) \mathscr{J}_{s_j,K_\delta}^{H_j}(E_j)  \leq \omega_{n-1}\mathscr{J}_{K_\delta}^H(F) .
		\end{equation*}
		{F}rom this and Theorem~\ref{th::gamma_conv_J}-(ii), we thereby conclude that
		\begin{equation}\label{eq::E_min_J}
			\omega_{n-1}\mathscr{J}_{K_\delta}^H(E) \le \liminf_{j\to+\infty} (1-2s_j) \mathscr{J}_{s_j,K_\delta}^{H_j}(E_j) \leq\omega_{n-1}\mathscr{J}_{K_\delta}^H(F).\end{equation}
		Hence, since~$E\Delta F\subset K \comp K_\delta$, we infer that 
		\begin{equation*}
			\mathscr{J}_{K}^H(E) \leq \mathscr{J}_{K}^H(F) ,
		\end{equation*} which proves that~$E$ is a local minimizer for the classical Massari functional.
		
		To conclude, notice that, choosing~$F:=E$ in~\eqref{eq::E_min_J}, we obtain~\eqref{eq::J_cont_conv_min}, as desired.
	\end{proof}
	
	\begin{rem}
		Though we will not use this fact in this paper, we point out that a small modification in the argument used for the proof of Theorem~\ref{th::min_J_conv} provides
		a different proof of~\cite[Proposition~2.1]{MR5015001}, which establishes the convergence of a sequence of almost minimizers of the fractional perimeter
		to an almost minimizer.
	\end{rem}
	
	\section[Proof of Theorem~\ref{th::gamma_conv}	\\ {\footnotesize $\Gamma$-convergence of	the perturbed fractional Allen-Cahn energy functional}]{Proof of Theorem~\ref{th::gamma_conv} - $\Gamma$-convergence of	the perturbed fractional Allen-Cahn energy functional}
	\label{sec::proof_gamma_conv}
	
	Here, we focus on proving Theorem~\ref{th::gamma_conv}. 
	For this, we point out that a straightforward consequence of Proposition~\ref{prop::L2_cont_conv} is the following statement:
	
	\begin{cor} \label{cor::H_conv}
		Let~$\{u_\epsilon\}_\epsilon$ be a sequence that converge to some function~$u$ in~$L^1_{\loc}(\R^n)$.
		Suppose that~$\{H_\epsilon\}_\epsilon\subset L^\infty(\R^n)$ is such that~$\sup_\epsilon\norm{H_\epsilon}_{L^\infty(\R^n)}<+\infty$, and~$H_\epsilon\to H$ in~$L^1(\Omega)$ as~$\epsilon\to0$.
		
		Then, 
		\begin{equation*}
			\lim_{\epsilon\to0^+} \H_\epsilon(u_\epsilon,\Omega) = \H(u,\Omega).
		\end{equation*}
		
		In addition, the sequence~$\{\H_\epsilon\}_\epsilon$ is pointwise convergent in~$L^1_{\loc}(\R^n)$ to~$\H$.
	\end{cor}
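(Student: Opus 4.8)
The plan is to deduce Corollary~\ref{cor::H_conv} directly from Proposition~\ref{prop::L2_cont_conv}, after restricting everything to the bounded set~$\Omega$. Throughout, we understand~$\{u_\epsilon\}_\epsilon\subset X_M$, consistently with the fact that the functionals~$\H_\epsilon$ are defined on~$X_M$; in particular~$\sup_\epsilon\norm{u_\epsilon}_{L^\infty(\Omega)}\le M$, and since~$X_M$ is closed under~$L^1_{\loc}$-convergence we also have~$u\in X_M$.

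First I would observe that, since~$\Omega$ is bounded, the assumption~$u_\epsilon\to u$ in~$L^1_{\loc}(\R^n)$ yields~$u_\epsilon\to u$ in~$L^1(\Omega)$. Next, I would set
\[
f_\epsilon:=H_\epsilon\,\chi_\Omega,\qquad g_\epsilon:=u_\epsilon\,\chi_\Omega,\qquad f:=H\,\chi_\Omega,\qquad g:=u\,\chi_\Omega,
\]
all of which lie in~$L^\infty(\R^n)$ with~$\sup_\epsilon\norm{f_\epsilon}_{L^\infty(\R^n)}\le\sup_\epsilon\norm{H_\epsilon}_{L^\infty(\R^n)}<+\infty$ and~$\sup_\epsilon\norm{g_\epsilon}_{L^\infty(\R^n)}\le M$; moreover~$\norm{f_\epsilon-f}_{L^1(\R^n)}=\norm{H_\epsilon-H}_{L^1(\Omega)}\to0$ and~$\norm{g_\epsilon-g}_{L^1(\R^n)}=\norm{u_\epsilon-u}_{L^1(\Omega)}\to0$, so that~$f_\epsilon\to f$ and~$g_\epsilon\to g$ in~$L^1_{\loc}(\R^n)$.

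Then I would apply Proposition~\ref{prop::L2_cont_conv} with the compact set~$K:=\overline{\Omega}$ (compact because~$\Omega$ is bounded). Since~$f_\epsilon g_\epsilon=H_\epsilon u_\epsilon\,\chi_\Omega$ and~$fg=Hu\,\chi_\Omega$, this gives
\[
\int_\Omega H_\epsilon u_\epsilon\,dx=\int_{\overline{\Omega}}f_\epsilon g_\epsilon\,dx\;\longrightarrow\;\int_{\overline{\Omega}}fg\,dx=\int_\Omega H\,u\,dx,
\]
and multiplying by~$c_{n,s}$ we obtain~$\H_\epsilon(u_\epsilon,\Omega)\to\H(u,\Omega)$, which is the first assertion. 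For the second assertion, I would specialize the first one to the constant sequence~$u_\epsilon\equiv v$, for an arbitrary~$v\in X_M$; the elementary bound~$|\H_\epsilon(v,\Omega)-\H(v,\Omega)|\le c_{n,s}\norm{v}_{L^\infty(\Omega)}\norm{H_\epsilon-H}_{L^1(\Omega)}\to0$ makes this transparent, and arguing as in the proof of Corollary~\ref{cor::H_cont_conv} this pointwise limit is exactly the pointwise convergence of~$\{\H_\epsilon\}_\epsilon$ to~$\H$ in~$L^1_{\loc}(\R^n)$ in the sense of~\cite[Definition~4.7]{dal_maso}, by~\cite[Remark~4.8]{dal_maso}.

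I do not expect any substantial obstacle: the corollary is a bookkeeping consequence of Proposition~\ref{prop::L2_cont_conv}. The only points requiring a little care are the extension-by-zero outside~$\Omega$ (needed so that the uniform~$L^\infty$ bounds and the~$L^1_{\loc}$-convergence hypotheses of Proposition~\ref{prop::L2_cont_conv} hold on all of~$\R^n$, even though~$u_\epsilon$ is only controlled on~$\Omega$ and~$H_\epsilon$ converges only there) and the passage from~$L^1_{\loc}(\R^n)$ to~$L^1(\Omega)$, which uses the boundedness of~$\Omega$. Finally, it is worth emphasizing that the first assertion says precisely that~$\H_\epsilon$ converges to~$\H$ \emph{continuously}; combined with the~$\Gamma$-convergence~$\F_\epsilon\xrightarrow{\Gamma}\F$ from~\cite[Theorem~1.2]{savin_valdinoci_gamma_conv}, this is what will allow one to conclude~$\E_\epsilon=\F_\epsilon+\H_\epsilon\xrightarrow{\Gamma}\F+\H=\E$ in Theorem~\ref{th::gamma_conv}.
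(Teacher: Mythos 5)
Your proposal is correct and follows exactly the route the paper intends: the paper derives Corollary~\ref{cor::H_conv} as a direct consequence of Proposition~\ref{prop::L2_cont_conv}, and your extension-by-zero of~$H_\epsilon$ and~$u_\epsilon$ outside~$\Omega$, the choice~$K=\overline{\Omega}$, and the implicit reading~$\{u_\epsilon\}_\epsilon\subset X_M$ are precisely the bookkeeping needed to instantiate that proposition. The only cosmetic remark is that the second assertion (convergence in the sense of~\cite[Definition~4.7]{dal_maso}, i.e.\ continuous convergence) is already the content of the first assertion via the sequential characterization of~\cite[Remark~4.8]{dal_maso}, so the detour through constant sequences is unnecessary --- a point you in fact acknowledge in your closing paragraph.
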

	
	We also recall that~$\F_\epsilon\xrightarrow{\Gamma}\F$, thanks to~\cite[Theorem~1.2]{savin_valdinoci_gamma_conv} 
	As a result, the proof of Theorem~\ref{th::gamma_conv}
	follows from this fact, Corollary~\ref{cor::H_conv} and~\cite[Proposition~6.20]{dal_maso}.
	
	
	\section{Preliminary results for limit stability of minimizers for the perturbed fractional Allen-Cahn energy functional} \label{sec::limit_stability_preliminaries} 

In this section we present two preliminary results useful in proving limit stability of minimizers for the perturbed fractional Allen-Cahn energy functional.

We first establish a convergence result for sequences in~$X_M$ with uniformly bounded energy, as claimed in~\eqref{eq::u_conv}.
\begin{lemma} \label{lemma::u_conv}
	Let~$\{u_\epsilon\}_\epsilon$ be a sequence in~$X_M$.
	If~$\E_\epsilon(u_\epsilon,\Omega)$ is uniformly bounded for a sequence of~$\epsilon\to0^+$, then there exist a convergent subsequence and a set~$E\subseteq\R^n$ such that
	\begin{equation} \label{eq::u_conv2}
		u_\epsilon\to u:=\chi_E-\chi_{E^c} \quad\text{in }L^1(\Omega).
	\end{equation}
\end{lemma}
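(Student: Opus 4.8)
The goal is to show that a sequence $\{u_\epsilon\}\subset X_M$ with uniformly bounded energy $\E_\epsilon(u_\epsilon,\Omega)=\F_\epsilon(u_\epsilon,\Omega)+\H_\epsilon(u_\epsilon,\Omega)$ admits a subsequence converging in $L^1(\Omega)$ to a signed characteristic function $\chi_E-\chi_{E^c}$. The first observation is that the forcing contribution is harmless: since $\sup_\epsilon\|H_\epsilon\|_{L^\infty(\R^n)}<+\infty$ (by assumption~\eqref{eq::H_conv_assumption} together with the boundedness of $H$) and $\|u_\epsilon\|_{L^\infty(\Omega)}\le M$, the term $\H_\epsilon(u_\epsilon,\Omega)=c_{n,s}\int_\Omega H_\epsilon u_\epsilon\,dx$ is bounded uniformly in $\epsilon$, say $|\H_\epsilon(u_\epsilon,\Omega)|\le C_0$. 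Hence from $\E_\epsilon(u_\epsilon,\Omega)\le C$ we deduce $\F_\epsilon(u_\epsilon,\Omega)\le C+C_0$, i.e.\ the unperturbed rescaled Allen--Cahn energy is itself uniformly bounded.

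\textbf{Reducing to the known case.} Once the bound on $\F_\epsilon(u_\epsilon,\Omega)$ is in hand, the statement becomes exactly the compactness half of the $\Gamma$-convergence theory for the (unforced) fractional Allen--Cahn functional. The plan is therefore to invoke the compactness result of Savin--Valdinoci, namely~\cite[Theorem~1.2 and related results]{savin_valdinoci_gamma_conv} (in all three regimes $s\in(0,1/2)$, $s=1/2$, $s\in(1/2,1)$, according to the definition of $\F_\epsilon$ in~\eqref{eq::F_epsilon}), which guarantees that a sequence with $\sup_\epsilon\F_\epsilon(u_\epsilon,\Omega)<+\infty$ is precompact in $L^1(\Omega)$ and every limit point is of the form $\chi_E-\chi_{E^c}$ for some measurable set $E$. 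Concretely: the potential term $\int_\Omega W(u_\epsilon)\,dx\le \epsilon^{2s}(C+C_0)\to 0$ (or the analogous estimate with $\kappa_\epsilon^{-1}$), so any $L^1$-limit $u$ satisfies $\int_\Omega W(u)\,dx=0$, which by~\eqref{eq::assW} forces $u\in\{-1,1\}$ a.e.\ in $\Omega$; the uniform bound on the Gagliardo-type seminorm contribution $\K(u_\epsilon,\Omega)$ then yields, via the density/BV-compactness estimates in~\cite{savin_valdinoci_gamma_conv}, the $L^1(\Omega)$-precompactness needed to extract the convergent subsequence. Setting $E:=\{u=1\}\cap\Omega$ (extended arbitrarily, e.g.\ by $E\setminus\Omega=\emptyset$) gives~\eqref{eq::u_conv2}.

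\textbf{Main obstacle.} The only genuinely non-trivial point is the very first step: controlling $\H_\epsilon(u_\epsilon,\Omega)$ so that the boundedness of the \emph{total} energy transfers to boundedness of $\F_\epsilon(u_\epsilon,\Omega)$. This is immediate here because $H_\epsilon$ is uniformly bounded in $L^\infty$ and $u_\epsilon$ is uniformly bounded in $L^\infty(\Omega)$, so $|\H_\epsilon(u_\epsilon,\Omega)|\le c_{n,s}\|H_\epsilon\|_{L^\infty(\Omega)}\,M\,|\Omega|$; the subtlety that would arise with a merely $L^1$ or $\epsilon$-dependent forcing term is absent thanks to assumption~\eqref{eq::H_conv_assumption}. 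After that, everything is a direct citation of~\cite{savin_valdinoci_gamma_conv}, with only the bookkeeping of the three $s$-regimes and the corresponding normalisation $\kappa_\epsilon$ in~\eqref{kappaepsilondef} to keep track of.
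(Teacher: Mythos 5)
Your proposal follows essentially the same route as the paper: absorb the forcing term $\H_\epsilon(u_\epsilon,\Omega)$ into the constant, deduce a uniform bound on $\F_\epsilon(u_\epsilon,\Omega)$, and then invoke the known compactness results for the unforced fractional Allen--Cahn functional (the paper cites~\cite[Lemma~10]{MR3081641} for $s\in(0,1/2)$ and~\cite[Proposition~3.3]{savin_valdinoci_gamma_conv} for $s\in[1/2,1)$, which is the precise form of the ``related results'' you appeal to). One small correction: assumption~\eqref{eq::H_conv_assumption} gives only $L^1(\Omega)$-convergence of $H_\epsilon$ to a bounded $H$, which does \emph{not} imply $\sup_\epsilon\norm{H_\epsilon}_{L^\infty}<+\infty$ as you claim; the right estimate (and the one the paper uses) is $|\H_\epsilon(u_\epsilon,\Omega)|\leq c_{n,s}M\norm{H_\epsilon}_{L^1(\Omega)}$, with $\norm{H_\epsilon}_{L^1(\Omega)}\leq\norm{H_\epsilon-H}_{L^1(\Omega)}+|\Omega|\norm{H}_{L^\infty(\Omega)}$ bounded for small $\epsilon$, which is all that is needed since $\norm{u_\epsilon}_{L^\infty(\Omega)}\leq M$.
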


\begin{proof}
	We observe that, since~$H_\epsilon$ converges to~$H$ in~$L^1(\Omega)$ by~\eqref{eq::H_conv_assumption}, if~$\epsilon$ is sufficiently small,
	$$ \int_\Omega |H_\epsilon|dx\le \int_\Omega |H_\epsilon-H|dx+\int_\Omega |H|dx\le |\Omega|(1+\norm{H}_{L^\infty(\Omega)}).$$
	{F}rom this and the fact that~$\E_\epsilon(u_\epsilon,\Omega)$ is bounded uniformly in~$\epsilon$, we deduce that
	\begin{equation*}
		\begin{split}
			&	|\F_\epsilon(u_\epsilon,\Omega)|\le C + |\H_\epsilon(u_\epsilon,\Omega)|\le C+\frac12\int_\Omega |H_\epsilon|\,|u_\epsilon|dx\\
			&\qquad \leq C+\frac{M}2\int_\Omega |H_\epsilon| dx \le C+\frac{M}{2}|\Omega|(1+\norm{H}_{L^\infty(\Omega)}),
		\end{split}
	\end{equation*}
	for some~$C>0$.
	
	As a result, $\F_\epsilon(u_\epsilon,\Omega)$ is uniformly bounded with respect to the parameter~$\epsilon$. Thus, the convergence in~\eqref{eq::u_conv2} follows from~\cite[Lemma~10]{MR3081641} when~$s\in(0,\frac{1}{2})$, and from~\cite[Proposition~3.3]{savin_valdinoci_gamma_conv} when~$s\in[\frac{1}{2},1)$ (see also Proposition~\ref{prop::bound_conv} in the Appendix).
\end{proof}

Then, we show that minimizers of the
energy functional~$\F_\epsilon$ are uniformly bounded.

\begin{lemma} \label{lemma::unif_bound_min}
	Let~$u\in X_M$ be a minimizer for~$\E_\epsilon$ in~$\Omega$. Then,
	$$\norm{u}_{L^\infty(\R^n)}\leq M.$$
\end{lemma}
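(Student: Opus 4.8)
The plan is to show that if $u\in X_M$ minimizes $\E_\epsilon(\cdot,\Omega)$, then the truncation $\tilde u:=\max\{-M,\min\{M,u\}\}$ satisfies $\E_\epsilon(\tilde u,\Omega)\le \E_\epsilon(u,\Omega)$, with strict inequality unless $u=\tilde u$ a.e.\ in $\R^n$. Since $\tilde u$ is a competitor (the constraint in $X_M$ only touches $\|u\|_{L^\infty(\Omega)}$, which truncation can only decrease, and outside $\Omega$ there is no constraint at all), this forces $u=\tilde u$, i.e.\ $\norm{u}_{L^\infty(\R^n)}\le M$.

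To carry this out I would split the energy $\E_\epsilon=\F_\epsilon+\H_\epsilon=\kappa_\epsilon\big(\epsilon^{2s}\K(\cdot,\Omega)+\int_\Omega W(u)\,dx\big)+\H_\epsilon$ (up to the appropriate power of $\epsilon$ or $|\epsilon\log\epsilon|$ as in \eqref{eq::F_epsilon}, which is an irrelevant positive multiplicative factor here) and treat the three pieces separately. \emph{Potential term:} since $W(\pm1)=0$, $W>0$ off $\{\pm1\}$ and $M>1$, for $|t|>M$ one has $W(t)\ge W(\tilde t)$ where $\tilde t$ is the truncation of $t$ — more precisely $W$ is increasing on $[1,\infty)$ near infinity; but actually the cleanest route is: truncation only changes $u$ on $\{|u|>M\}\subset\Omega^c$ (once we know the $\Omega$-part is fine) — wait, $u$ could exceed $M$ inside $\Omega$ too only if $\|u\|_{L^\infty(\Omega)}=M$ forbids it, so in fact $|u|\le M$ a.e.\ in $\Omega$ already, hence $\int_\Omega W(u)\,dx$ and $\int_\Omega H_\epsilon u\,dx=\frac{\H_\epsilon}{c_{n,s}}$ are \emph{unchanged} by truncation. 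So the entire content is the nonlocal Gagliardo term $\K(\tilde u,\Omega)$ versus $\K(u,\Omega)$. \emph{Gagliardo term:} this is the key step. One uses the elementary pointwise inequality
$$\big(\tilde u(x)-\tilde u(y)\big)^2\le \big(u(x)-u(y)\big)^2\qquad\text{for a.e.\ }x,y\in\R^n,$$
valid because $t\mapsto\max\{-M,\min\{M,t\}\}$ is $1$-Lipschitz and monotone, hence $1$-Lipschitz and also non-expansive in the sense that $|\tilde a-\tilde b|\le|a-b|$. Integrating this against the positive kernel $|x-y|^{-n-2s}$ over the two regions $\Omega\times\Omega$ and $\Omega\times\Omega^c$ entering the definition \eqref{eq::def_K} of $\K$ gives $\K(\tilde u,\Omega)\le\K(u,\Omega)$ directly.

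Finally I would address strictness / the "a.e.\ in $\R^n$" conclusion. Combining the above, $\E_\epsilon(\tilde u,\Omega)\le\E_\epsilon(u,\Omega)$, and by minimality equality holds, so $\K(\tilde u,\Omega)=\K(u,\Omega)$; since the integrand inequality is strict wherever exactly one of $x,y$ lies in $\{|u|>M\}$, and the kernel is strictly positive, the set $\{|u|>M\}$ must be Lebesgue-null (otherwise, pairing it with the positive-measure set where $|u|<M$ — which is non-null since e.g.\ $u=\pm1$ somewhere in $\Omega$, or simply because $\{|u|<M\}$ cannot be null in $\R^n$ — would make the inequality strict). Hence $\norm{u}_{L^\infty(\R^n)}\le M$. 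The main obstacle is really just making the "equality in $\K$ forces $\{|u|>M\}$ null" step rigorous — i.e.\ checking that $\{|u|>M\}$ and its complement cannot conspire to have a null product-measure interaction set — but this is immediate from strict positivity of the kernel once one notes $\{|u|>M\}\subset\Omega^c$ is at positive distance issues aside paired with all of $\{|u|\le M\}$ which has infinite measure; alternatively one can avoid strictness entirely by noting that truncation is harmless and simply \emph{replacing} any minimizer by $\tilde u$, but the statement as phrased about every minimizer does require the strictness argument.
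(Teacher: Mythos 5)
Your proposal is correct and follows essentially the same route as the paper: truncate at level $\pm M$ (which leaves the potential term, the forcing term and the $\Omega\times\Omega$ part of $\K$ untouched, since $u\in X_M$ already gives $|u|\le M$ in $\Omega$) and use the $1$-Lipschitz property of truncation to show that the $\Omega\times\Omega^c$ interaction does not increase. Your closing strictness argument --- pairing the positive-measure set $\{|u|>M\}\subset\Omega^c$ with $\Omega$ under the strictly positive kernel to turn the non-strict energy comparison into a genuine contradiction with minimality --- is the right way (indeed the necessary way) to finish, and is in fact spelled out more carefully than in the paper's own proof.
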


\begin{proof}
	Since~$u\in X_M$, then~$|u|\leq M$ in~$\Omega$. 
	
	We now check that~$|u|\le M$ in~$\R^n\setminus\Omega$. For this, suppose by contradiction that~$|u|>M$ in a region with positive measure outside~$\Omega$. Consider the function
	\begin{equation*}
		v:=
		\begin{cases}
			u,&\mbox{if }|u|\leq M,\\
			M,&\mbox{if }u> M,\\
			-M,&\mbox{if }u<- M.
		\end{cases}
	\end{equation*}
	In particular, $v=u$ in~$\Omega$. Moreover,
	\begin{equation*}
		\begin{split}
			&\int_{\Omega}\int_{\Omega^c} \frac{|v(x)-v(y)|}{|x-y|^{n+2s}}dxdy \\
			&\qquad= \int_{\Omega}\int_{\Omega^c\cap\{|u|\leq M\}} \frac{|v(x)-v(y)|}{|x-y|^{n+2s}}dxdy + \int_{\Omega}\int_{\Omega^c\cap \{|u|>M\}} \frac{|v(x)-v(y)|}{|x-y|^{n+2s}}dxdy\\
			&\qquad = \int_{\Omega}\int_{\Omega^c\cap\{|u|\leq M\}} \frac{|u(x)-u(y)|}{|x-y|^{n+2s}}dxdy + \int_{\Omega}\int_{\Omega^c\cap \{u>M\}} \frac{M-u(y)}{|x-y|^{n+2s}}dxdy\\&\qquad\qquad\qquad
			+\int_{\Omega}\int_{\Omega^c\cap \{u<-M\}} \frac{M+u(y)}{|x-y|^{n+2s}}dxdy\\
			&\qquad \leq  \int_{\Omega}\int_{\Omega^c\cap\{|u|\leq M\}} \frac{|u(x)-u(y)|}{|x-y|^{n+2s}}dxdy + \int_{\Omega}\int_{\Omega^c\cap \{|u|>M\}} \frac{|u(x)-u(y)|}{|x-y|^{n+2s}}dxdy\\
			&\qquad = \int_{\Omega}\int_{\Omega^c} \frac{|u(x)-u(y)|}{|x-y|^{n+2s}}dxdy.
		\end{split}
	\end{equation*}
	Hence, 
	\begin{equation*}
		\begin{split}
			&\K(v,\Omega) = \frac{1}{2} \int_{\Omega}\int_{\Omega} \frac{|v(x)-v(y)|}{|x-y|^{n+2s}}dxdy + \int_{\Omega}\int_{\Omega^c} \frac{|v(x)-v(y)|}{|x-y|^{n+2s}}dxdy\\
			&\qquad\leq \frac{1}{2} \int_{\Omega}\int_{\Omega} \frac{|u(x)-u(y)|}{|x-y|^{n+2s}}dxdy + \int_{\Omega}\int_{\Omega^c} \frac{|u(x)-u(y)|}{|x-y|^{n+2s}}dxdy = \K(u,\Omega).
		\end{split}
	\end{equation*}
	
	Therefore, we deduce that
	\begin{equation*}
		\E_\epsilon(v,\Omega) = \K(v,\Omega) + \int_\Omega (W(v) + H\,v)dx \leq \K(u,\Omega) + \int_\Omega (W(u) + H\,u)dx = \E_\epsilon(u,\Omega),
	\end{equation*}
	against the minimality of~$u$. Hence, $|u|\leq M$ in the whole~$\R^n$, as desired.
\end{proof}


\section[Proof of Theorem~\ref{th::min_conv}-(i) when~$s\in(0,1/2)$]{Proof of Theorem~\ref{th::min_conv}-(i) when~$s\in\left(0,\frac{1}{2}\right)$} \label{sec::s_less_half}

In this section, we prove Theorem~\ref{th::min_conv} when~$s\in(0,1/2)$. This is in some sense the easiest configuration, while the case~$s\in[1/2,1)$ will require a bit more effort.

The argument in our proof below is inspired by the proof of~\cite[Theorem~1.3]{savin_valdinoci_gamma_conv} when~$s\in(0,1/2)$.

\begin{proof}[Proof of Theorem~\ref{th::min_conv}-(i)]
	Let~$v\in L^1_{\loc}(\R^n)$ be a function such that~$v=\chi_F-\chi_{F^c}$ in~$\Omega$,
	for some set~$F$, and~$v=u_0$ in~$\Omega^c$.
	
	Let also
	\begin{equation*}
		u_\star:=
		\begin{cases}
			u &\text{ in }\Omega, \\
			u_0 &\text{ in }\Omega^c ,
		\end{cases}
	\end{equation*} where~$u$ is given by Lemma~\ref{lemma::u_conv}. 
	
	We define, for every~$y\in\Omega^c$,
	\begin{equation*}
		\psi(y) := \int_\Omega \frac{v(x)}{|x-y|^{n+2s}} dx \qquad\text{and}\qquad \Psi(y) := \int_\Omega \frac{u_\star(x)}{|x-y|^{n+2s}} dx.
	\end{equation*}
	Since 
	\begin{equation*} 
		\int_{\Omega^c}\int_\Omega \frac{dxdy}{|x-y|^{n+2s}} <+\infty,\quad\mbox{for any }s\in\left(0,\frac{1}{2}\right),
	\end{equation*}
	we have that~$\psi$, $\Psi\in L^1(\Omega^c)$.
	
	Furthermore, by Lemmata~\ref{lemma::u_conv} and~\ref{lemma::unif_bound_min}, we know that~$u_\epsilon$ converges to~$u$ in~$L^1(\Omega)$, and 
	$$\norm{u}_{L^\infty(\R^n)} + \sup_\epsilon\norm{u_\epsilon}_{L^\infty(\R^n)}\leq 1+M.$$
	Thus, using the Dominated Convergence Theorem, we obtain that
	\begin{equation} \label{eq::th_min_conv2}
		\lim_{\epsilon\to0^+} \int_\Omega\int_{\Omega^c} \frac{|u_\epsilon(x)|^2}{|x-y|^{n+2s}}dxdy =  \int_\Omega\int_{\Omega^c} \frac{|u(x)|^2}{|x-y|^{n+2s}}dxdy 
	\end{equation}
	and
	\begin{equation} \label{eq::th_min_conv3}
		\begin{split}
			&\lim_{\epsilon\to0^+} \left| \int_\Omega\int_{\Omega^c} \big(u_\epsilon(x)-u(x)\big)\frac{u_\epsilon(y)}{|x-y|^{n+2s}}dxdy \right| \\
			&\qquad\qquad \leq M
			\lim_{\epsilon\to0^+} \int_\Omega\int_{\Omega^c} \frac{|u_\epsilon(x)-u(x)|}{|x-y|^{n+2s}}dxdy =  0.
		\end{split}
	\end{equation}
	
	In addition, thanks to Fatou's Lemma, 
	\begin{equation} \label{eq::th_min_conv4}
		\int_\Omega \int_\Omega \frac{|u(x)-u(y)|}{|x-y|^{n+2s}} dxdy \leq \liminf_{\epsilon\to0^+} \int_\Omega \int_\Omega \frac{|u_\epsilon(x)-u_\epsilon(y)|}{|x-y|^{n+2s}} dxdy .
	\end{equation}
	Moreover, by definition of weak* convergence in~$L^\infty(\Omega^c)$, we have that
	\begin{equation} \label{eq::th_min_conv1}
		\lim_{\epsilon\to0^+} \int_{\Omega^c} (u_\epsilon(y)-u_0(y))\phi(y)\, dy = 0,\quad\text{ for every }\phi\in L^1(\Omega^c).
	\end{equation}
	
	Now, we consider the function
	\begin{equation*}
		v_\epsilon:=
		\begin{cases}
			v &\text{ in }\Omega, \\
			u_\epsilon &\text{ in }\Omega^c .
		\end{cases}
	\end{equation*}
	Then, we have
	\begin{equation*}
		\begin{split}
			&\E_\epsilon(v_\epsilon,\Omega)-\E_\epsilon(u_\epsilon,\Omega) \\
			=\;&\, \K(v_\epsilon,\Omega)-\K(u_\epsilon,\Omega)-\epsilon^{-2s}\int_\Omega W(u_\epsilon(x))dx +\frac12
			\int_\Omega (v(x)-u_\epsilon(x))H_\epsilon(x) dx \\
			\leq\;& \frac{1}{2}\big(v(\Omega,\Omega)-u_\epsilon(\Omega,\Omega)\big)+\int_\Omega\int_{\Omega^c} \frac{|v(x)-u_\epsilon(y)|^2-|u_\epsilon(x)-u_\epsilon(y)|^2}{|x-y|^{n+2s}}dxdy\\
			&\qquad +\frac12\int_\Omega (v(x)-u_\epsilon(x))H_\epsilon(x) dx \\
			=\;& \frac{1}{2}\big(v(\Omega,\Omega)-u_\epsilon(\Omega,\Omega)\big)
			+\int_\Omega\int_{\Omega^c} \frac{|v(x)|^2-|u_\epsilon(x)|^2}{|x-y|^{n+2s}}dxdy \\
			& \qquad+2\int_{\Omega^c} \big(\Psi(y)-\psi(y)\big) u_\epsilon(y)dy +2\int_\Omega\int_{\Omega^c} \big(u_\epsilon(x)-u(x)\big)\frac{u_\epsilon(y)}{|x-y|^{n+2s}}dxdy\\&\qquad+\frac12\int_\Omega \left(v(x)-u_\epsilon(x)\right)H_\epsilon(x) dx .
		\end{split}
	\end{equation*}
	Therefore, it follows from the minimality of~$u_\epsilon$ that
	\begin{equation*}
		\begin{split}
			0 
			\leq\; &\frac{1}{2}\big(v(\Omega,\Omega)-u_\epsilon(\Omega,\Omega)\big)
			+\int_\Omega\int_{\Omega^c} \frac{|v(x)|^2-|u_\epsilon(x)|^2}{|x-y|^{n+2s}}dxdy \\
			& \qquad+2\int_{\Omega^c} \big(\Psi(y)-\psi(y)\big) u_\epsilon(y)dy +2\int_\Omega\int_{\Omega^c} \big(u_\epsilon(x)-u(x)\big)\frac{u_\epsilon(y)}{|x-y|^{n+2s}}dxdy\\&\qquad+\frac12\int_\Omega \left(v(x)-u_\epsilon(x)\right)H_\epsilon(x) dx.
		\end{split}
	\end{equation*}
	Taking the~$\liminf$ as~${\epsilon\to0^+}$ in the last expression, and using~\eqref{eq::th_min_conv2}, \eqref{eq::th_min_conv3},
	\eqref{eq::th_min_conv4} and~\eqref{eq::th_min_conv1}, we conclude that
	\begin{equation*}
		\begin{split}
			0
			\leq&\, \frac{1}{2}\left(v(\Omega,\Omega)-u(\Omega,\Omega)\right)+\int_\Omega\int_{\Omega^c} \frac{|v(x)|^2-|u(x)|^2}{|x-y|^{n+2s}}dxdy \\
			& +2\int_{\Omega^c} \big(\Psi(y)-\psi(y)\big) u_0(y)dy
			+\frac12\int_\Omega (v(x)-u(x))H(x) dx\\
			=&\, \frac{1}{2}\left(v(\Omega,\Omega)-u_\star(\Omega,\Omega)\right)+\int_\Omega\int_{\Omega^c} \frac{|v(x)|^2-|u_\star(x)|^2}{|x-y|^{n+2s}}dxdy \\
			& +2\int_{\Omega^c} \big(\Psi(y)-\psi(y)\big) u_\star(y)dy
			+\frac12\int_\Omega (v(x)-u_\star(x))H(x) dx\\
			=&\, \E(v,\Omega) - \E(u_\star,\Omega).
		\end{split}
	\end{equation*}
	Namely, $u_\star$ is a minimizer for the functional~$\E$ in~$\Omega$, as desired.
\end{proof}


\section[Proof of Theorem~\ref{th::min_conv}-(ii) when~$s\in[1/2,1)$]{Proof of Theorem~\ref{th::min_conv}-(ii) when~$s\in\left[\frac{1}{2},1\right)$} \label{sec::s_geq_half}

Now, we tackle part~\eqref{enum::min_conv2} of Theorem~\ref{th::min_conv}. For this, we revisit the proof of~\cite[Theorem~1.3]{savin_valdinoci_gamma_conv} to take into account the extra contribution due to the forcing term in the energy functional.

\begin{proof}[Proof of Theorem~\ref{th::min_conv}-\eqref{enum::min_conv2}]
	Let~$K\comp\Omega$ be a smooth set, and take~$\delta>0$ so small that 
	$$K_\delta:=\{x\in\R^n \text{ s.t. }\mbox{dist}(x,K)<\delta\} \subset\Omega .$$	
	Consider also a set~$F$ such that~$F\setminus K = E\setminus K$. 
	
	Thanks to~\cite[Propositions~4.5 and~4.6]{savin_valdinoci_gamma_conv}, there exists a sequence~$\{w_\epsilon\}_\epsilon\subset L^1(K_\delta)$ such that
	\begin{align}
		\label{eq::w_eps_prop1} &w_\epsilon \to \chi_F-\chi_{F^c} \text{ in }L^1(K_\delta)\\
		\label{eq::w_eps_prop2} \text{and } \quad &\lim_{\epsilon\to0^+}\F_\epsilon(w_\epsilon,K_\delta) = c_\star\Per(F,K_\delta),
	\end{align}
	where the constant~$c_\star$ is as in~\eqref{eq::F}.
	
	We point out that, in~$L^1(K_\delta\setminus K)$, the function~$ u_\epsilon-w_\epsilon$ converges to
	$$ \chi_E-\chi_{E^c} - ( \chi_F-\chi_{F^c}) =0.$$
	Thus, we are in the position of using~\cite[Proposition~4.1]{savin_valdinoci_gamma_conv}. In this way, we obtain that
	there exists a sequence of functions~$v_\epsilon$ satisfying\footnote{The notation~$\mathring{K}$ denotes the interior of the set~$K$, namely~$K\setminus\partial K$.
		
		Also, see in particular the formula below~(4.31) in the proof of~\cite[Proposition~4.1]{savin_valdinoci_gamma_conv} for~\eqref{eq::SV_prop_4_1} here.}
	\begin{equation*}
		v_\epsilon =
		\begin{cases}
			w_\epsilon \quad&\text{in }\mathring{K},\\
			u_\epsilon \quad&\text{in } K_\delta^c,
		\end{cases}
	\end{equation*} and
	such that
	\begin{equation} \label{eq::SV_prop_4_1}
		\F_\epsilon(v_\epsilon,\Omega) \leq \F_\epsilon(u_\epsilon,\Omega)- \F_\epsilon(u_\epsilon,K)+ \F_\epsilon(w_\epsilon,K_\delta)+o(\epsilon^{2s-1}).
	\end{equation}
	
	{F}rom this and the definition of~$\E_\epsilon$, we obtain that
	\begin{equation*} 
		\begin{split} 
			& \E_\epsilon(v_\epsilon,\Omega) = \F_\epsilon(v_\epsilon,\Omega)+\H_\epsilon(v_\epsilon,\Omega)\\
			&\leq  \F_\epsilon(u_\epsilon,\Omega)- \F_\epsilon(u_\epsilon,K)+ \F_\epsilon(w_\epsilon,K_\delta) + \H_\epsilon(v_\epsilon,\Omega)+o(\epsilon^{2s-1})\\
			&= \E_\epsilon(u_\epsilon,\Omega)- \E_\epsilon(u_\epsilon,K)+ \F_\epsilon(w_\epsilon,K_\delta)
			+ \H_\epsilon(v_\epsilon,\Omega)-\H_\epsilon(u_\epsilon,\Omega)+\H_\epsilon(u_\epsilon,K)+o(\epsilon^{2s-1})
			\\
			&\leq  \E_\epsilon(u_\epsilon,\Omega)- \E_\epsilon(u_\epsilon,K)+ \F_\epsilon(w_\epsilon,K_\delta)+\H_\epsilon(w_\epsilon,K) -\H_\epsilon(u_\epsilon,K_\delta\setminus K)+o(\epsilon^{2s-1})
			.	\end{split}
	\end{equation*}
	
	Now, since~$u_\epsilon$ is a minimizer for~$\E_\epsilon$, we have that 
	\begin{equation*}
		\E_\epsilon(u_\epsilon,\Omega)  \leq \E_\epsilon(v_\epsilon,\Omega) ,
	\end{equation*}
	and hence
	\begin{equation} \label{eq::min_conv_geqhalf_1}
		\E_\epsilon(u_\epsilon,K)\leq \F_\epsilon(w_\epsilon,K_\delta)+\H_\epsilon(w_\epsilon,K)- \H_\epsilon(u_\epsilon,K_\delta\setminus K)+o(\epsilon^{2s-1}).
	\end{equation}
	Thus, taking the~$\limsup$ for~$\epsilon\to0^+$ on both sides of~\eqref{eq::min_conv_geqhalf_1} and using~\eqref{eq::w_eps_prop2} and Proposition~\ref{prop::L2_cont_conv}, we infer that
	\begin{equation*} 
		\begin{split}
			&	\limsup_{\epsilon\to0^+} \E_\epsilon(u_\epsilon,K) 
			\\ \leq\;& c_\star\Per(F,K_\delta) + \frac{1}{2\omega_{n-1}}\int_{K} (\chi_F-\chi_{F^c})H\, dx 
			-\frac{1}{2\omega_{n-1}}\int_{K_\delta\setminus K} (\chi_E-\chi_{E^c})H\, dx\\
			\leq\;&c_\star\Per(F,K_\delta) + \H(\chi_F-\chi_{F^c},K) 
			+\frac{1}{2\omega_{n-1}}\|H\|_{L^\infty(\R^n)}|K_\delta\setminus K|.
		\end{split}
	\end{equation*}
	Therefore, taking the limit as~$\delta\to0$, we conclude that
	\begin{equation} \label{eq::min_conv_geqhalf_3}
		\limsup_{\epsilon\to0^+} \E_\epsilon(u_\epsilon,K) \leq c_\star\mathscr{P}_{\overline{K}}^{H/c_\star}(F) . 
	\end{equation}
	Accordingly, choosing~$F:=E$, we establish~\eqref{eq::energy_upper_bound} for every smooth set~$K\comp\Omega$. 
	
	In general, if~$K$ is not smooth, we obtain the desired result by a standard approximation with smooth sets~$\{U_j\}_j$ such that~$K\subset U_j\comp\Omega$, for every~$j$. The proof of~\eqref{eq::energy_upper_bound} is thereby complete. 
	
	We now check that~$u$ minimizes~$\E$. By~\cite[Proposition~4.5]{savin_valdinoci_gamma_conv} and Proposition~\ref{prop::L2_cont_conv}, we have that
	\begin{equation} \label{eq::min_conv_geqhalf_4}
		\begin{split}
			c_\star\mathscr{P}_K^{H/c_\star}(E) &= c_\star\Per(E,K) + \frac{1}{2\omega_{n-1}}\int_K H(\chi_E-\chi_{E^c})dx \\
			&\leq \limsup_{\epsilon\to0^+}\F_\epsilon(u_\epsilon,K)+\lim_{\epsilon\to0^+}\H_\epsilon(u_\epsilon,K) \\
			&\leq \limsup_{\epsilon\to0^+}\E_\epsilon(u_\epsilon,K).
		\end{split}
	\end{equation}
	Since the functional~$\mathscr{P}$ is continuous with respect to the~$L^1_{\loc}$-convergence of the domain, using~\eqref{eq::min_conv_geqhalf_3} and~\eqref{eq::min_conv_geqhalf_4} with a sequence of smooth sets~$\{U_j\}_j$ such that~$U_j\to\Omega$, we conclude that
	\begin{equation*}
		\mathscr{P}_\Omega^{H/c_\star}(E)\leq \mathscr{P}_\Omega^{H/c_\star}(F) , \quad\text{ for all~$F$ s.t. }F\setminus\Omega=E\setminus\Omega , 
	\end{equation*} as desired.
\end{proof}


\section[Proof of Theorem~\ref{th::gamma_conv_mass_prescribed}-(ii) when~$s\in[1/2,1)$]{Proof of Theorem~\ref{th::gamma_conv_mass_prescribed}\,-\eqref{item::limsup_mass_prescribed} when~$s\in\left[\frac{1}{2},1\right)$} \label{sec::mass_prescribed_AC}

In this section, we prove Theorem~\ref{th::gamma_conv_mass_prescribed}-\eqref{item::limsup_mass_prescribed} when~$s\in[1/2,1)$.

To do so, we suppose that~$u=\chi_E-\chi_{E^c}$ in~$\Omega$, for some set~$E$, otherwise~$\F(u,\Omega)=+\infty$ and there is nothing to prove. The idea is to consider a sequence~$\{v_\epsilon\}_\epsilon$ such that
\begin{eqnarray}
	\label{item::prop_u_epslion1}&& {\mbox{$v_\epsilon\to u$ in $L^1_{\loc}(\R^n)$, as $\epsilon\to0$}}
	\\ \label{item::prop_u_epslion2} {\mbox{and }}  && \F(u,\Omega) \geq \limsup_{\epsilon\to0^+} \F_\epsilon(v_\epsilon,\Omega)
\end{eqnarray}
and suitably modify~$v_\epsilon$ in such a way that we obtain a sequence~$\{u_\epsilon\}_\epsilon\subset Z_{M,m}$ with the same properties~\eqref{item::prop_u_epslion1} and~\eqref{item::prop_u_epslion2}.

To employ this strategy we recall the following result, needed to construct a suitable recovery sequence:

\begin{lemma}[Proposition~4.6, \cite{savin_valdinoci_gamma_conv}] \label{lemmakgeroith9876}
	Given a set~$E$, there exists a sequence~$v_\epsilon$ that
	converges to~$\chi_E-\chi_{E^c}$ in~$L^1(\Omega)$ such that~$\|v_\epsilon\|_{L^\infty(\R^n)}\le1$ and
	$$ \limsup_{\epsilon\to0^+} \F_\epsilon(v_\epsilon,\Omega) \le c_\star\Per(E,\Omega) .$$
\end{lemma}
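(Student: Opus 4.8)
Since this is precisely the $\limsup$-part of the $\Gamma$-convergence of $\F_\epsilon$ towards $\F$ in the regime $s\in[1/2,1)$, the honest answer is that I would simply invoke \cite[Proposition~4.6]{savin_valdinoci_gamma_conv}. For completeness, here is the structure of the argument that proves it.

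If $\Per(E,\Omega)=+\infty$ there is nothing to prove, and if $\partial E\cap\Omega=\varnothing$ one may take the constant sequence $v_\epsilon\equiv\chi_E-\chi_{E^c}$. Otherwise, the first reduction is to assume that $\partial E\cap\Omega$ is a smooth (or polyhedral) hypersurface: smooth sets are dense in energy among finite-perimeter sets, $\Per(\cdot,\Omega)$ is lower semicontinuous, and the standard diagonal argument for $\Gamma$-convergence upgrades recovery sequences for the approximating sets to a recovery sequence for $E$, so it suffices to treat the smooth case.

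Next comes the one-dimensional optimal layer. Since for $s\geq1/2$ the $H^s$-Gagliardo seminorm of the pure jump $\chi_{(0,\infty)}-\chi_{(-\infty,0)}$ is infinite, one works with a transition profile $g_\epsilon:\R\to[-1,1]$ truncated to the values $\pm1$ outside an interval $(-\ell_\epsilon,\ell_\epsilon)$, where $\ell_\epsilon\to0$ (the precise rate, a power of $\epsilon$ with a logarithmic correction when $s=1/2$, is dictated by the one-dimensional analysis of \cite[Theorem~4.2 and Formula~(4.35)]{savin_valdinoci_gamma_conv}, whose content is that the minimal value of the corresponding rescaled one-dimensional energy of such a layer tends to the constant $c_\star=c_\star(n,s,W)$ appearing in~\eqref{eq::F}). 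Writing $d(x):=\mathrm{dist}(x,E)-\mathrm{dist}(x,E^c)$ for the (globally Lipschitz) signed distance to $\partial E$, one sets $v_\epsilon(x):=g_\epsilon(d(x))$ in the tubular neighborhood $\{|d|<\ell_\epsilon\}$ and $v_\epsilon(x):=\pm1$ accordingly elsewhere. This $v_\epsilon$ is Lipschitz, obeys $\|v_\epsilon\|_{L^\infty(\R^n)}\le1$, and converges to $\chi_E-\chi_{E^c}$ in $L^1(\Omega)$ (indeed in $L^1_{\loc}(\R^n)$) since the transition slab has vanishing measure on every compact set.

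Finally one estimates $\F_\epsilon(v_\epsilon,\Omega)$. Foliating a neighborhood of $\partial E$ by the level sets of $d$ and using the coarea formula, the potential term $\int_\Omega W(v_\epsilon)\,dx$ together with the part of $\K(v_\epsilon,\Omega)$ coming from pairs $x,y$ that both lie in the slab at distance $O(\ell_\epsilon)$ reduce, after the rescaling built into $\F_\epsilon$, to $\mathcal{H}^{n-1}(\partial E\cap\Omega)=\Per(E,\Omega)$ times the one-dimensional minimal energy, hence to $c_\star\Per(E,\Omega)+o(1)$. The genuinely non-local point — and the step I expect to be the main obstacle — is to show that all the remaining interactions are negligible in the $\limsup$: namely the contributions to $\K(v_\epsilon,\Omega)$ from pairs of points separated by a macroscopic distance (in particular the coupling between $\Omega$ and $\Omega^c$), and, when $E$ is polyhedral, the contributions localized near the edges and corners of $\partial E$ and near $\partial\Omega$. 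These are handled by direct kernel estimates: $|v_\epsilon(x)-v_\epsilon(y)|$ vanishes unless at least one of $x,y$ lies in the $\ell_\epsilon$-neighborhood of $\partial E$, which has measure $O(\ell_\epsilon)$, and $|x-y|^{-n-2s}$ is integrable away from the diagonal, so each such term contributes $o(1)$ to $\F_\epsilon$. Passing to the $\limsup$ and undoing the density reduction yields $\limsup_{\epsilon\to0^+}\F_\epsilon(v_\epsilon,\Omega)\le c_\star\Per(E,\Omega)$, as claimed.
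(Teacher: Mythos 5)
This lemma is imported verbatim from~\cite[Proposition~4.6]{savin_valdinoci_gamma_conv} and the paper supplies no proof of its own, so your decision to simply invoke that citation is exactly what the paper does. Your accompanying sketch is a faithful outline of the cited construction; the only cosmetic difference is that the recovery sequence actually used later in the paper (see the proof of Lemma~\ref{lemma::mass_prescribed_recovery_seq}) is $v_\epsilon(x)=u_0(d(x)/\epsilon)$ with the full one-dimensional minimal profile rather than a truncated layer, the far-field interactions being controlled by the decay estimates~\eqref{eq::v_epsilon_growth} instead of by an exact cutoff.
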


We will now suitably perturb the sequence provided by Lemma~\ref{lemmakgeroith9876} in order to make it satisfy the mass constraint. This will be accomplished thanks to the following lemma:

\begin{lemma} \label{lemma::mass_prescribed_recovery_seq}
	Let~$\{v_\epsilon\}_\epsilon$ be the recovery sequence given by Lemma~\ref{lemmakgeroith9876}. 
	
	Then, there exists~$\varepsilon_0>0$ such that, for all~$\epsilon\in(0,\epsilon_0)$, there exists a function~$\phi_\epsilon\in \cont^\infty_c(\Omega)$ such that~$\|\phi_\epsilon\|_{L^\infty(\R^n)}\leq C\epsilon^s$, for some positive constant~$C$ independent of~$\epsilon$, and, setting~$u_\epsilon:=v_\epsilon+\phi_\epsilon$, it holds that~$u_\epsilon\in Z_{M,m}$ and
	\begin{equation*}
		\limsup_{\epsilon\to0^+} \F_\epsilon(u_\epsilon,\Omega) \le \limsup_{\epsilon\to0^+} \F_\epsilon(v_\epsilon,\Omega) .
	\end{equation*}
\end{lemma}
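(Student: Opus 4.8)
The plan is to correct the Savin--Valdinoci recovery sequence $\{v_\epsilon\}_\epsilon$ of Lemma~\ref{lemmakgeroith9876} by a fixed smooth bump rescaled by its exact mass defect, and then to verify that the energy cost of this correction is negligible.

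We may assume $u|_\Omega=\chi_E-\chi_{E^c}$ for some set $E$ (otherwise $\F(u,\Omega)=+\infty$ and there is nothing to prove), so that $\F(u,\Omega)=c_\star\Per(E,\Omega)<+\infty$ and $\int_\Omega u\,dx=m$. Set $\delta_\epsilon:=m-\int_\Omega v_\epsilon\,dx$, which tends to $0$ since $v_\epsilon\to u$ in $L^1(\Omega)$. The key \emph{quantitative} input is the sharper rate $|\delta_\epsilon|\le\norm{v_\epsilon-u}_{L^1(\Omega)}\le C\kappa_\epsilon^{-1}$, i.e. $|\delta_\epsilon|\le C\epsilon$ for $s\in(1/2,1)$ and $|\delta_\epsilon|\le C\epsilon|\log\epsilon|$ for $s=1/2$. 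This does \emph{not} follow from the abstract energy bound of Lemma~\ref{lemmakgeroith9876} (which, via $\int_\Omega W(v_\epsilon)\,dx\le C\kappa_\epsilon^{-1}$ and Cauchy--Schwarz, only yields $O(\epsilon^{1/2})$, too weak once $s\ge1/2$); it has to be extracted from the explicit construction in \cite[Propositions~4.5 and~4.6]{savin_valdinoci_gamma_conv}, using that the one-dimensional transition profile decays like $|t|^{-2s}$ at infinity, which is integrable precisely when $s>1/2$, with a logarithmic loss at $s=1/2$. Establishing this rate is the main obstacle.

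Fix then a ball $B$ with $\overline B\comp\Omega$ and $\psi\in\cont^\infty_c(B)$ with $\psi\ge0$ and $\int_\Omega\psi\,dx=1$, and put $\phi_\epsilon:=\delta_\epsilon\psi$, $u_\epsilon:=v_\epsilon+\phi_\epsilon$. Then $\phi_\epsilon\in\cont^\infty_c(\Omega)$ and $\norm{\phi_\epsilon}_{L^\infty(\R^n)}=|\delta_\epsilon|\,\norm{\psi}_{L^\infty(\R^n)}\le C\kappa_\epsilon^{-1}\le C\epsilon^s$ for $\epsilon$ small (since $\kappa_\epsilon^{-1}\le\epsilon\le\epsilon^s$ when $s\in(1/2,1)$, and $\kappa_\epsilon^{-1}=\epsilon|\log\epsilon|=o(\epsilon^{1/2})$ when $s=1/2$), which gives the asserted amplitude bound and $\phi_\epsilon\to0$ pointwise. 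Moreover $\int_\Omega u_\epsilon\,dx=\int_\Omega v_\epsilon\,dx+\delta_\epsilon=m$ and $\norm{u_\epsilon}_{L^\infty(\Omega)}\le1+C\epsilon^s\le M$ for $\epsilon<\epsilon_0$ (recall $M>1$), so $u_\epsilon\in Z_{M,m}$; and $u_\epsilon\to u$ in $L^1_{\loc}(\R^n)$ since $v_\epsilon$ does and $\phi_\epsilon\to0$ in $L^1(\Omega)$.

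It remains to show $\limsup_{\epsilon\to0^+}\F_\epsilon(u_\epsilon,\Omega)\le\limsup_{\epsilon\to0^+}\F_\epsilon(v_\epsilon,\Omega)$. Since for $s\in[1/2,1)$ one has $\F_\epsilon(\cdot,\Omega)=\kappa_\epsilon\big(\epsilon^{2s}\K(\cdot,\Omega)+\int_\Omega W(\cdot)\,dx\big)$, it is enough to prove $\kappa_\epsilon\epsilon^{2s}\big(\K(u_\epsilon,\Omega)-\K(v_\epsilon,\Omega)\big)\to0$ and $\kappa_\epsilon\int_\Omega\big(W(u_\epsilon)-W(v_\epsilon)\big)\,dx\to0$, for then $\F_\epsilon(u_\epsilon,\Omega)=\F_\epsilon(v_\epsilon,\Omega)+o(1)$. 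From Lemma~\ref{lemmakgeroith9876} we record $\epsilon^{2s}\K(v_\epsilon,\Omega)\le C\kappa_\epsilon^{-1}$ and $\int_\Omega W(v_\epsilon)\,dx\le C\kappa_\epsilon^{-1}$. For the Gagliardo term, since $\K(\cdot,\Omega)$ is a nonnegative quadratic form with polarisation $\mathcal B$, we get $\K(u_\epsilon,\Omega)-\K(v_\epsilon,\Omega)=2\delta_\epsilon\,\mathcal B(v_\epsilon,\psi)+\delta_\epsilon^2\,\K(\psi,\Omega)$, and Cauchy--Schwarz gives $|\mathcal B(v_\epsilon,\psi)|\le \K(\psi,\Omega)^{1/2}\K(v_\epsilon,\Omega)^{1/2}\le C\K(v_\epsilon,\Omega)^{1/2}$; together with $|\delta_\epsilon|\le C\kappa_\epsilon^{-1}$ and the bound on $\K(v_\epsilon,\Omega)$, a short computation yields $\kappa_\epsilon\epsilon^{2s}|\K(u_\epsilon,\Omega)-\K(v_\epsilon,\Omega)|\le C(\epsilon^s\kappa_\epsilon^{-1/2}+\epsilon^{2s}\kappa_\epsilon^{-1})\to0$. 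For the potential term, Taylor's formula (the intermediate value lying in $[-2,2]$ for $\epsilon$ small, where $W''$ is bounded) gives $\big|\int_\Omega(W(u_\epsilon)-W(v_\epsilon))\,dx-\delta_\epsilon\int_B W'(v_\epsilon)\psi\,dx\big|\le C\delta_\epsilon^2$. Here the linear term would only be bounded after multiplication by $\kappa_\epsilon$, unless one exploits the degeneracy of $W$ at the wells: from $W'(\pm1)=0$, $W''(\pm1)>0$ and $W>0$ on $(-1,1)$, the quotient $|W'|^2/W$ extends continuously to $[-1,1]$, so $|W'(t)|^2\le CW(t)$ there, and, as $|v_\epsilon|\le1$, Cauchy--Schwarz gives $\big|\int_B W'(v_\epsilon)\psi\,dx\big|\le C\big(\int_\Omega W(v_\epsilon)\,dx\big)^{1/2}\le C\kappa_\epsilon^{-1/2}$; hence $\kappa_\epsilon\big|\int_\Omega(W(u_\epsilon)-W(v_\epsilon))\,dx\big|\le C(\kappa_\epsilon^{-1/2}+\kappa_\epsilon^{-1})\to0$. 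The subtle balance is that the correction must be at once small enough in amplitude, of order $\epsilon^s$, so that the quadratic energy corrections survive the diverging prefactor $\kappa_\epsilon$, and structured so that its linear coupling with $v_\epsilon$ in the potential picks up the extra half power $\kappa_\epsilon^{-1/2}$ coming from $|W'|^2\le CW$; the quantitative $L^1$-rate of the recovery sequence is exactly what makes both of these work.
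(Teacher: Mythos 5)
Your proposal is correct, and it reaches the conclusion by a genuinely different route than the paper for the key energy comparison. The paper covers $\partial E\cap\overline\Omega$ by a finite union of small balls $B$, places the unit-mass bump in $\Omega\setminus\overline B$, proves $|c_\epsilon|\le C\epsilon^s$ (not the sharper $\kappa_\epsilon^{-1}$ rate), shows that $\F_\epsilon(u_\epsilon,\Omega\setminus B)$ and $\F_\epsilon(v_\epsilon,\Omega\setminus B)$ both vanish in the limit because there $v_\epsilon$ is within $O(\epsilon^{2s}/\delta^{2s})$ of $\pm1$, and then compares $\F_\epsilon(u_\epsilon,B)$ with $\F_\epsilon(v_\epsilon,B)$ only through the cross-interaction $u_\epsilon(B,B^c)-v_\epsilon(B,B^c)=O(\epsilon^s)$, since $u_\epsilon=v_\epsilon$ on $B$; placing the bump away from the interface is precisely what lets the paper avoid any estimate on the linear term $\int W'(v_\epsilon)\phi_\epsilon$. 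You instead allow the bump anywhere and absorb that linear term via the polarisation/Cauchy--Schwarz bound for $\K$ together with the degeneracy inequality $|W'|^2\le CW$ on $[-1,1]$; this is a clean and more ``structural'' argument, and all your estimates check out. Two remarks. First, the step you flag as the main obstacle --- the rate $|\delta_\epsilon|\le C\kappa_\epsilon^{-1}$ --- is only sketched, not proved; it is true (Fermi coordinates plus the $(1+|t|)^{-2s}$ decay of $u_0$, integrable for $s>1/2$ with a log loss at $s=1/2$), but in a full write-up you would need to carry out that computation, exactly as the paper does for its weaker bound, and you should also state up front (as the paper does) that $E$ may be assumed smooth by approximation so that the signed-distance construction applies. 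Second, the sharper rate is not actually needed: plugging the paper's bound $|\delta_\epsilon|\le C\epsilon^s$ into your own estimates gives $\kappa_\epsilon^{1/2}|\delta_\epsilon|\le C\epsilon^{s-1/2}$ for $s>1/2$ and $C|\log\epsilon|^{-1/2}$ for $s=1/2$ in the potential term, and the Gagliardo and quadratic terms are even better, so everything still tends to zero; only the genuinely abstract rate $\kappa_\epsilon^{-1/2}$ is borderline. So your argument is correct as it stands and could even be run with a slightly cheaper quantitative input.
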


\begin{proof}
	Up to a standard approximation argument, we assume that the set~$E$ has smooth boundary. 
	
	Recall that the sequence~$\{v_\epsilon\}_\epsilon$ is produced  in~\cite[Proposition~4.6]{savin_valdinoci_gamma_conv} as follows.
	Let~$d(x)$ be the signed distance from~$\partial E$, i.e.
	\begin{equation*}
		d(x):=
		\begin{cases}
			\mbox{dist}(x,\partial E), & \text{ if }x\in E,\\
			-\mbox{dist}(x,\partial E), &\text{ if }x\in E^c,
		\end{cases}
	\end{equation*}
	and set	
	$$ v_\epsilon(x) := u_0\left(\frac{d(x)}{\epsilon}\right),$$
	where~$u_0:\R\to[-1,1]$ is the unique nontrivial~$1$-dimensional minimizer of the functional  
	\begin{equation} \label{eq::1D_functional}
		\J_1(u,\Omega) = \K(u,\Omega) + \int_\Omega W(u)dx
	\end{equation}
	(see~\cite[Theorem~2]{MR3081641} and~\cite[Theorem~4.2]{savin_valdinoci_gamma_conv}).
	
	Moreover, from~\cite[Theorem~2]{MR3081641}, we have that, for every~$t\in\R$,
	\begin{equation} \label{eq::v_epsilon_growth}
		|\mbox{sgn}(t)-u_0(t)|\leq \frac{C}{(1+|t|)^{2s}} \qquad\text{and}\qquad |u_0'(t)|\leq \frac{C}{(1+|t|)^{1+2s}}.
	\end{equation}
	
	Now we take~$\alpha>0$, to be assumed conveniently small in what follows,
	and we consider a family of balls~$\{B_{\rho_j}(x_j)\}_{j=0}^{+\infty}$ centered at some~$x_j\in\partial E\cap \Omega$ and with~$\rho_j <\alpha$, such that 
	\begin{align*}
		&\partial E\cap \overline{\Omega}\subseteq \bigcup_{j=0}^{+\infty} B_{\rho_j}(x_j),\\
		\text{and}\quad& \Per(E,\overline{\Omega}) + \alpha \geq \omega_{n-1}\sum_{j=0}^{+\infty} \rho_j^{n-1}.
	\end{align*}
	Besides, by compactness, there exists a finite covering~$\{B_{\rho_j}(x_j)\}_{j=0}^N$ such that
	$$ \partial E\cap \overline{\Omega}\subseteq B:=\bigcup_{j=0}^{N} B_{\rho_j}(x_j). $$
	
	Let us consider a bump-like function~$\phi\in\cont^\infty_c(\Omega\setminus\overline{B})$ such that 
	\begin{equation}\label{eq::media_perturbazione}
		\int_\Omega\phi\,dx=1.
	\end{equation} 
	For every~$\epsilon>0$, we define the function~$u_\epsilon := v_\epsilon + c_\epsilon\phi$, where 
	\begin{equation}\label{eq::media_perturbazione2}
		c_\epsilon := m-\int_\Omega v_\epsilon dx=\int_\Omega u dx-\int_\Omega v_\epsilon dx.
	\end{equation}
	
	We claim that
	\begin{equation}\label{claimru5rgrd7649549}
		|c_\epsilon|\le C\epsilon^s,
	\end{equation}
	for some positive constant~$C$ independent of~$\epsilon$.
	
	To check this, we use the first estimate in~\eqref{eq::v_epsilon_growth} to see that 
	\begin{equation*} 
		\begin{split}
			&|c_\epsilon| \leq \int_\Omega |u-v_\epsilon|dx = \int_{\Omega} \left| \mbox{sgn}(d(x))-u_0\left(\frac{d(x)}{\epsilon}\right)\right|dx\\
			&\qquad\leq C\sqrt{2} \int_{\Omega} \left| \mbox{sgn}(d(x))-u_0\left(\frac{d(x)}{\epsilon}\right)\right|^{1/2}dx \leq C \int_\Omega \left(\frac{\epsilon}{|d(x)|}\right)^{s}dx ,
		\end{split}
	\end{equation*}
	for some constant~$C>0$ depending only on~$n$, $s$ and~$\Omega$, and possibly changing at every step of the computations. 
	
	Then, splitting~$\Omega$ into~$(\partial E)_\eta$ and~$\Omega\setminus (\partial E)_\eta$, where
	$$ (\partial E)_\eta := \{x\in \Omega: d(x)<\eta\},\quad \text{for some fixed }\eta>0, $$
	and using Fermi coordinates, we obtain that
	\begin{equation*}
		|c_\epsilon| \leq C|\Omega|\left(\frac{\epsilon}{\eta}\right)^s + C\int_{(\partial E)_\eta} \left(\frac{\epsilon}{|d(x)|}\right)^{s} dx \leq C\epsilon^s\left(1+\int_{-\eta}^\eta t^{-s}dt\right)  \leq C \epsilon^{s},
	\end{equation*}
	for some positive constant~$C=C(n,s,\Omega,\eta)$ independent of~$\epsilon$. This establishes~\eqref{claimru5rgrd7649549}, as desired.
	
	{F}rom~\eqref{claimru5rgrd7649549}, we infer that, for all~$x\in\Omega$,
	\begin{equation}\label{mnbvcxz09876543qwertyu}
		|u_\epsilon(x)| \le |v_\epsilon(x)|+ |c_\epsilon|\,|\phi(x)|\le 1+C\epsilon^s\|\phi\|_{L^\infty(\R^n)}\le M,
	\end{equation} provided that~$\epsilon$ is sufficiently small.
	
	Moreover, recalling~\eqref{eq::media_perturbazione} and~\eqref{eq::media_perturbazione2},
	\begin{eqnarray*}
		\int_\Omega u_\epsilon dx= \int_\Omega v_\epsilon dx +c_\epsilon\int_\Omega \phi dx =  \int_\Omega v_\epsilon dx +m- \int_\Omega v_\epsilon dx=m.
	\end{eqnarray*}
	This and~\eqref{mnbvcxz09876543qwertyu} entail that~$u_\epsilon\in Z_{M,m}$, provided that~$\varepsilon$ is sufficiently small. 
	
	Now, we set
	\begin{equation*}
		\delta:= \inf_{x\in\Omega\setminus B} |d(x)| >0.
	\end{equation*}
	{F}rom~\eqref{eq::v_epsilon_growth}, we see that, for all~$x\in\Omega\setminus B$,
	$$ |\mbox{sgn}(x)-v_\epsilon(x)|=\left|\mbox{sgn}(x)-u_0\left(\frac{d(x)}{\epsilon}\right)\right|
	\leq \frac{C\epsilon^{2s}}{|d(x)|^{2s}}\le \frac{C\epsilon^{2s}}{\delta^{2s}},$$
	and therefore, recalling also~\eqref{claimru5rgrd7649549},
	$$ |\mbox{sgn}(x)-u_\epsilon(x)|\le  |\mbox{sgn}(x)-v_\epsilon(x)|+C\epsilon^s
	\le
	\frac{C\epsilon^{2s}}{\delta^{2s}}+C\epsilon^s\le C\epsilon^s,$$
	for some positive constant~$C$ independent of~$\epsilon$ and possibly changing from line to line.
	
	As a consequence, recalling the properties of~$W$ in~\eqref{eq::assW}, we find that, for all~$x\in\Omega\setminus B$,
	\begin{eqnarray*}&& W(u_\epsilon(x))= W(u_\epsilon(x))- W(\mbox{sgn}(x))\le C|1-u_\epsilon(x)|^2 |1+u_\epsilon(x)|^2\\&&\qquad\qquad
		\le C|\mbox{sgn}(x)-u_\epsilon(x)|^2
		\le C\epsilon^{2s}.
	\end{eqnarray*}
	Accordingly, up to renaming~$C$,
	\begin{equation} \label{eq::W_to_0}
		\int_{\Omega\setminus B}W(u_\epsilon) dx \le C\epsilon^{2s}.
	\end{equation}
	
	Furthermore, if~$x\in\Omega\setminus B$, then~\eqref{eq::v_epsilon_growth} gives that
	$$ |\nabla v_\epsilon(x)|=\frac1\epsilon\left|u_0'\left(\frac{d(x)}{\epsilon}\right)\right|\le
	\frac{C\epsilon^{2s}}{|d(x)|^{1+2s}}\le \frac{C}{\delta^{1+2s}}.
	$$
	Hence, if~$x\in\Omega\setminus B$ and~$y\in\R^n$ with~$|x-y|\leq\delta/2$, 
	\begin{eqnarray*}
		|u_\epsilon(x)-u_\epsilon(y)| \leq |v_\epsilon(x)-v_\epsilon(y)| + c_\epsilon|\phi(x)-\phi(y)|
		\leq C|x-y|,
	\end{eqnarray*}
	with~$C$ depending also on~$\delta$.
	
	Therefore, for all~$x\in\Omega\setminus B$,
	\begin{equation*}
		\int_{\R^n} \frac{|u_\epsilon(x)-u_\epsilon(y)|^2}{|x-y|^{n+2s}} dy \leq C \left(\int_{0}^{\delta/2} r^{1-2s}dr + \int_{\delta/2}^{+\infty}r^{-1-2s}dr\right) \leq C,
	\end{equation*}
	and hence
	\begin{equation} \label{eq::K_to_0}
		u_\epsilon(\Omega\setminus B,\R^n) \leq C,
	\end{equation}
	up to renaming~$C$ line after line.
	
	Combining the estimates in~\eqref{eq::W_to_0} and~\eqref{eq::K_to_0},
	we obtain that 
	\begin{equation*}
		\limsup_{\epsilon\to0^+} \F_\epsilon(u_\epsilon,\Omega\setminus B)
		= 0,
	\end{equation*}
	and therefore
	$$ \lim_{\epsilon\to0^+} \F_\epsilon(u_\epsilon,\Omega\setminus B)
	= 0.$$
	As a result,	
	\begin{equation} \label{eq::limsup_Omega_B_u}
		\limsup_{\epsilon\to0^+} \F_\epsilon(u_\epsilon,\Omega) = \limsup_{\epsilon\to0^+} \F_\epsilon(u_\epsilon, B).
	\end{equation} 		
	
	Similarly, one can show (see the proof of~\cite[Proposition~4.6]{savin_valdinoci_gamma_conv}) that also
	\begin{equation} \label{eq::limsup_Omega_B_u2} 
		\limsup_{\epsilon\to0^+} \F_\epsilon(v_\epsilon,\Omega) = 
		\limsup_{\epsilon\to0^+} \F_\epsilon(v_\epsilon, B).
	\end{equation}
	
	We also observe that
	\begin{eqnarray*}
		&&\big| u_\epsilon(B,B^c)-v_\epsilon(B,B^c)\big|=\left|\int_B\int_{B^c}\frac{|u_\epsilon(x)-u_\epsilon(y)|^2- |v_\epsilon(x)-v_\epsilon(y)|^2}{|x-y|^{n+2s}} dx dy\right|\\
		&&\qquad=\left|\int_B\int_{\Omega\setminus B}\frac{|v_\epsilon(x)-v_\epsilon(y)-c_\epsilon\phi(y)|^2- |v_\epsilon(x)-v_\epsilon(y)|^2}{|x-y|^{n+2s}} dx dy\right|\\
		&&\qquad =\left|\int_B\int_{\Omega\setminus B}\frac{2(v_\epsilon(x)-v_\epsilon(y))c_\epsilon\phi(y)+ c_\epsilon^2|\phi(y)|^2}{|x-y|^{n+2s}} dx dy\right|\\
		&&\qquad\le\int_B\int_{\Omega\setminus B}\frac{4|c_\epsilon|\,|\phi(y)|+ c_\epsilon^2|\phi(y)|^2}{|x-y|^{n+2s}} dx dy\\&&\qquad\le C\epsilon^s.
	\end{eqnarray*}
	As a consequence, if~$s\in(1/2,1)$,
	\begin{equation} \label{eq::limsup_Omega_B_u3}\begin{split}
			\F_\epsilon(u_\epsilon,B)= \F_\epsilon(v_\epsilon,B) +  \epsilon^{2s-1}\big(u_\epsilon(B,B^c)-v_\epsilon(B,B^c)\big)\le 
			\F_\epsilon(v_\epsilon,B) +  C\epsilon^{3s-1},
	\end{split}\end{equation} which gives that
	$$ \limsup_{\epsilon\to0^+} \F_\epsilon(u_\epsilon,B)\le \limsup_{\epsilon\to0^+} \F_\epsilon(v_\epsilon, B).$$
	A similar computation works for~$s=1/2$.
	
	Thus, from~\eqref{eq::limsup_Omega_B_u}, \eqref{eq::limsup_Omega_B_u2} and~\eqref{eq::limsup_Omega_B_u3},
	we infer that
	\begin{equation*}
		\begin{split}
			&\limsup_{\epsilon\to0^+} \F_\epsilon(u_\epsilon,\Omega) = \limsup_{\epsilon\to0^+} \F_\epsilon(u_\epsilon,B) \\
			&\qquad \le \limsup_{\epsilon\to0^+} \F_\epsilon(v_\epsilon, B) = \limsup_{\epsilon\to0^+} \F_\epsilon(v_\epsilon, \Omega),
		\end{split}
	\end{equation*}
	concluding the proof.
\end{proof}

\begin{proof}[Proof of Theorem~\ref{th::gamma_conv_mass_prescribed}-\eqref{item::limsup_mass_prescribed} when~$s\in[1/2,1)$]
	Let~$u\in Z_{M,m}\subset X_M$
	and let~$\{u_\epsilon\}_\epsilon\subset Z_{M,m}$
	be the recovery sequence constructed in Lemma~\ref{lemma::mass_prescribed_recovery_seq}.
	Then, Lemmata~\ref{lemmakgeroith9876} and~\ref{lemma::mass_prescribed_recovery_seq} entail that
	\begin{equation*}
		\F(u,\Omega) \geq \limsup_{\epsilon\to0^+} \F_\epsilon(v_\epsilon,\Omega) \ge \limsup_{\epsilon\to0^+} \F_\epsilon(u_\epsilon,\Omega),
	\end{equation*}
	as desired.
\end{proof}


\section[Proof of Theorem~\ref{th::energy_uniform_estimate}	\\ {\footnotesize Uniform energy bound for minimizers with a mass constraint}]{Proof of Theorem~\ref{th::energy_uniform_estimate} - {Uniform energy bound for minimizers with a mass constraint}}
\label{sec::energy_uniform_estimate} 

In this section, we prove that a sequence of solutions for~\eqref{eq::prescribed_mass_AC} has uniformly bounded energy as stated in Theorem~\ref{th::energy_uniform_estimate}. This result is the analog in our setting of~\cite[Theorem~1.3]{savin_valdinoci_density_estimates}.

\begin{proof}[Proof of Theorem~\ref{th::energy_uniform_estimate}]
	Let~$E\subseteq\R^n$ be such that~$|\Omega\cap E |=\frac{m+|\Omega|}{2}$
	and let~$v:=\chi_{{E}}-\chi_{{E}^c}$. Clearly~$\|v\|_{L^\infty(\R^n)}\leq 1 < M$. Moreover, notice that
	$$ \int_\Omega v\,dx=|\Omega\cap  E|-|\Omega\setminus E|=2|\Omega\cap  E|-|\Omega| =m+|\Omega|-|\Omega|=m.	$$
	That is, $v$ belongs to~$Z_{M,m}$.
	
	Then, we can exploit Theorem~\ref{th::gamma_conv_mass_prescribed} and obtain that there exists a sequence~$\{v_\epsilon\}_\epsilon\subset Z_{M,m}$ such that~$v_\epsilon\to v$ in~$L^1_{\loc}(\R^n)$, and
	\begin{equation*} \label{eq::limsup_unif_bound}
		\limsup_{\epsilon\to0^+}\F_\epsilon(v_\epsilon,\Omega)\leq \F(v,\Omega).
	\end{equation*}
	Hence,  the minimality of~$u_\epsilon$ entails that 
	\begin{equation*}
		\limsup_{\epsilon\to0^+} \F_\epsilon(u_\epsilon,\Omega) \leq\F(v,\Omega),
	\end{equation*}
	from which we infer that~$\F_\epsilon(u_\epsilon,\Omega)$ is bounded uniformly in~$\epsilon$, as desired. 
\end{proof}


\section[Proof of Theorem~\ref{th::u_def_outside_omega} \\ {\footnotesize External behavior of solutions of the prescribed-mass non-local Allen-Cahn Equation}]{Proof of Theorem~\ref{th::u_def_outside_omega} - External behavior of solutions of the prescribed-mass non-local Allen-Cahn Equation}
\label{sec::u_def_outside_omega}

In this section, we show that a solution~$u_\epsilon$ of~\eqref{eq::prescribed_mass_AC} satisfies a non-local Neumann external condition. Such condition entails a tight link between the behavior of~$u_\epsilon$ outside and inside~$\Omega$, as stated in Theorem~\ref{th::u_def_outside_omega}. For more details and results related to the non-local Neumann external condition, we refer to~\cite{MR3651008}.

\begin{proof}[Proof of Theorem~\ref{th::u_def_outside_omega}]
	Let~$u_\epsilon\in Z_{M,m}$ be a minimizer for~$\F_\epsilon$ in~$\Omega$. Then, by the minimality of~$u_\epsilon$, for every~$\phi\in\cont^\infty_c(\Omega^c)$, we have that~$u_\epsilon+\phi \in Z_{M,m}$, and
	\begin{equation} \label{eq::u_epsilon_min}
		\F_\epsilon(u_\epsilon,\Omega)\leq \F_\epsilon(u_\epsilon+\phi,\Omega).
	\end{equation}
	Since~${\rm{supp}}(\phi)\comp\Omega^c$, the inequality in~\eqref{eq::u_epsilon_min} entails that
	\begin{eqnarray*}
		0&\le & 	\K(u_\epsilon+\phi,\Omega)-\K(u_\epsilon,\Omega)\\& =&
		\int_\Omega\int_{\Omega^c}\frac{|(u_\epsilon+\phi)(x)- (u_\epsilon+\phi)(y)|^2-|u_\epsilon(x)-u_\epsilon(y)|^2}{|x-y|^{n+2s}}dxdy\\
		&=&\int_\Omega\int_{\Omega^c}\frac{2(u_\epsilon(x)-u_\epsilon(y))(\phi(x)-\phi(y))+|\phi(x)-\phi(y)|^2}{|x-y|^{n+2s}}dxdy\\&=&\int_\Omega\int_{\Omega^c}\frac{-2(u_\epsilon(x)-u_\epsilon(y))\phi(y)+|\phi(y)|^2}{|x-y|^{n+2s}}dxdy,
	\end{eqnarray*}
	namely
	\begin{equation}  \label{eq::diff_condition}
		2\int_\Omega\int_{\Omega^c} \frac{(u_\epsilon(x)-u_\epsilon(y))\phi(y)}{|x-y|^{n+2s}}dxdy \leq \int_\Omega\int_{\Omega^c} \frac{|\phi(y)|^2}{|x-y|^{n+2s}}dxdy=:c_\phi.
	\end{equation}
	
	Notice that~$c_\phi=0$ if and only if~$\phi\equiv0$. Therefore, if~$\phi$ does not vanish identically, we define~$\psi:=\phi/c_\phi$ and we use~$\psi$ as
	a test function in~\eqref{eq::diff_condition}. In this way, we obtain that
	\begin{equation} \label{eq::diff_condition2}
		2\int_\Omega\int_{\Omega^c} \frac{(u_\epsilon(x)-u_\epsilon(y))\phi(y)}{|x-y|^{n+2s}}dxdy \leq 1.
	\end{equation}
	
	Now, if 	\begin{equation*}
		\int_\Omega\int_{\Omega^c} \frac{(u_\epsilon(x)-u_\epsilon(y))\phi(y)}{|x-y|^{n+2s}}dxdy \neq 0,
	\end{equation*}
	we define
	\begin{equation*}
		\widetilde{\phi}(x):=\frac{\phi(x)}{\displaystyle\int_\Omega\int_{\Omega^c} \frac{(u_\epsilon(x)-u_\epsilon(y))\phi(y)}{|x-y|^{n+2s}}dxdy}.
	\end{equation*}
	Using~$\widetilde\phi$ as a test function in~\eqref{eq::diff_condition2}, we thus see that
	\begin{eqnarray*}
		2=2\int_\Omega\int_{\Omega^c} \frac{(u_\epsilon(x)-u_\epsilon(y))\widetilde\phi(y)}{|x-y|^{n+2s}}dxdy \leq 1,
	\end{eqnarray*}	
	which is a contradiction. 
	
	Therefore, for all~$\phi\in\cont^\infty_c(\Omega^c)$ with~$\phi\not\equiv0$, it must be that
	\begin{equation*}
		\int_\Omega\int_{\Omega^c} \frac{(u_\epsilon(x)-u_\epsilon(y))\phi(y)}{|x-y|^{n+2s}}dxdy = 0.
	\end{equation*}
	{F}rom this we infer that
	\begin{equation*}
		\int_\Omega \frac{u_\epsilon(x)-u_\epsilon(y)}{|x-y|^{n+2s}}dy = 0,\quad{\mbox{ for a.e. }} x\in\Omega^c
	\end{equation*} and the desired result is established.
\end{proof}


\section[Proof of Theorem~\ref{th::multiplier_bound}\\ {\footnotesize Convergence of the Lagrange multipliers for the fractional Allen-Cahn energy functional under a mass constraint}]{Proof of Theorem~\ref{th::multiplier_bound} - Convergence of the Lagrange multipliers for the fractional Allen-Cahn energy functional under a mass constraint}
\label{sec::lagr_mult_conv_prescribed_mass}

This section is devoted to the Proof of Theorem~\ref{th::multiplier_bound}.

\begin{proof}[Proof of Theorem~\ref{th::multiplier_bound}]
	Since~$u_\epsilon$ is a solution of~\eqref{eq::inhomog_mass_AC} in~$\Omega$,
	applying iteratively~\cite[Propositions~2.8 and~2.9]{silvestre_regularity} we deduce that~$u_\epsilon$ is smooth in~$\Omega$. 
	
	Therefore, for every vector-valued function~$\psi\in\cont^\infty_c(\Omega,\R^n)$, we have that
	\begin{eqnarray*}0&=&
		\int_{\Omega} \big(\kappa_\epsilon\epsilon^{2s}(-\Delta)^su_\epsilon + \kappa_\epsilon W'(u_\epsilon) + \mu_\epsilon \big)(\psi\cdot \nabla u_\epsilon)dx \\
		&=&\int_{\Omega} \kappa_\epsilon\epsilon^{2s}(-\Delta)^su_\epsilon(\psi\cdot \nabla u_\epsilon)
		+ \kappa_\epsilon\nabla (W(u_\epsilon))\cdot\psi + \mu_\epsilon (\psi\cdot \nabla u_\epsilon)dx .
	\end{eqnarray*}
	Thus, after an integration by parts, we find that
	\begin{equation*}
		\int_{\Omega} \kappa_\epsilon\epsilon^{2s}(-\Delta)^su_\epsilon (\psi\cdot \nabla u_\epsilon) + \big(\kappa_\epsilon W(u_\epsilon) + \mu_\epsilon u_\epsilon \big)\mbox{div}\psi\, dx = 0,
	\end{equation*}
	and hence
	\begin{equation} \label{eq::var_eq}
		|\mu_\epsilon|\left| \int_{\Omega} u_\epsilon\,\mbox{div}\psi\, dx \right| = \left| \int_{\Omega} \kappa_\epsilon\epsilon^{2s}(-\Delta)^su_\epsilon (\psi\cdot \nabla u_\epsilon)\, dx+ \int_{\Omega} \kappa_\epsilon W(u_\epsilon) \,\mbox{div}\psi\, dx\right|. 
	\end{equation}
	
	Now we estimate the first term on the right-hand side of~\eqref{eq::var_eq}. By the definition of the fractional
	Laplacian, using also that~$\mbox{supp}(\psi)\comp \Omega$, we obtain that
	\begin{equation*}
		\begin{split}
			\int_{\Omega} (-\Delta)^su_\epsilon (\psi\cdot \nabla u_\epsilon)\, dx
			&= 2\int_{\R^n}\left(\int_{\R^n} \frac{u_\epsilon(x)-u_\epsilon(y)}{|x-y|^{n+2s}}dy\right) \psi(x)\cdot \nabla_xu_\epsilon(x)\, dx\\
			&= 2\int_{\R^n}\left(\int_{\R^n} \frac{u_\epsilon(x)-u_\epsilon(y)}{|x-y|^{n+2s}}dy\right) \psi(x)\cdot \nabla_x(u_\epsilon(x)-u_\epsilon(y))\, dx\\
			&= \int_{\R^n} \int_{\R^n} \frac{\nabla_x\big((u_\epsilon(x)-u_\epsilon(y))^2\big)}{|x-y|^{n+2s}}\cdot \psi(x)\, dxdy	.
		\end{split}
	\end{equation*}
	Thus, after an integration by parts, we are led to
	\begin{equation}\label{ygkw683498eyvcmhsdvfkhedshiut4yorDDFgj}
		\begin{split}
			&\int_{\Omega} (-\Delta)^su_\epsilon( \psi\cdot \nabla u_\epsilon)\, dx\\ 
			=& -\int_{\R^n}\int_{\R^n} \frac{|u_\epsilon(x)-u_\epsilon(y)|^2}{|x-y|^{n+2s}} \left(\mbox{div}_x\psi(x) -(n+2s)\frac{\psi(x)\cdot(x-y)}{|x-y|^2}\right)\, dxdy\\ 
			=& -\int_{\Omega}\int_{\R^n} \frac{|u_\epsilon(x)-u_\epsilon(y)|^2}{|x-y|^{n+2s}} \left(\mbox{div}_x\psi(x) -(n+2s)\frac{\psi(x)\cdot(x-y)}{|x-y|^2}\right)\, dxdy.
		\end{split}
	\end{equation}
	
	We observe that
	\begin{equation*}
		\begin{split}
			&\int_{\Omega}\int_{\Omega} \frac{|u_\epsilon(x)-u_\epsilon(y)|^2\psi(x)\cdot(x-y)}{|x-y|^{n+2s+2}} \, dxdy\\
			&\qquad\qquad\qquad =\frac12\int_{\Omega}\int_{\Omega} \frac{|u_\epsilon(x)-u_\epsilon(y)|^2\psi(x)\cdot(x-y)}{|x-y|^{n+2s+2}} \, dxdy\\
			&\qquad\qquad\qquad\qquad +\frac12\int_{\Omega}\int_{\Omega} \frac{|u_\epsilon(x)-u_\epsilon(y)|^2\psi(x)\cdot(x-y)}{|x-y|^{n+2s+2}} \, dxdy\\
			&\qquad\qquad\qquad =\frac12\int_{\Omega}\int_{\Omega} \frac{|u_\epsilon(x)-u_\epsilon(y)|^2\psi(x)\cdot(x-y)}{|x-y|^{n+2s+2}} \, dxdy\\
			&\qquad\qquad\qquad\qquad+\frac12\int_{\Omega}\int_{\Omega} \frac{|u_\epsilon(y)-u_\epsilon(x)|^2\psi(y)\cdot(y-x)}{|y-x|^{n+2s+2}} \, dxdy\\
			&\qquad\qquad\qquad =\frac12\int_{\Omega}\int_{\Omega} \frac{|u_\epsilon(x)-u_\epsilon(y)|^2(\psi(x)-\psi(y))\cdot(x-y)}{|x-y|^{n+2s+2}} \, dxdy.
		\end{split}
	\end{equation*}
	Plugging this information into~\eqref{ygkw683498eyvcmhsdvfkhedshiut4yorDDFgj}, we find that
	\begin{equation*}
		\begin{split}
			&\int_{\Omega} (-\Delta)^su_\epsilon (\psi\cdot \nabla u_\epsilon)\, dx\\ 
			=& -\int_{\Omega}\int_{\Omega} \frac{|u_\epsilon(x)-u_\epsilon(y)|^2}{|x-y|^{n+2s}} \left(\mbox{div}_x\psi(x)- \frac{(n+2s)(\psi(x)-\psi(y))\cdot(x-y)}{2|x-y|^2}\right)\, dxdy\\
			& -\int_{\Omega}\int_{\Omega^c} \frac{|u_\epsilon(x)-u_\epsilon(y)|^2}{|x-y|^{n+2s}} \left(\mbox{div}_x\psi(x)-(n+2s)\frac{\psi(x)\cdot(x-y)}{|x-y|^2}\right)\, dxdy	.
		\end{split}
	\end{equation*}
	As a result,
	\begin{equation*}
		\left|	\int_{\Omega} (-\Delta)^su_\epsilon (\psi\cdot \nabla u_\epsilon)\, dx\right|\le
		C\int_{\Omega}\int_{\R^n} \frac{|u_\epsilon(x)-u_\epsilon(y)|^2}{|x-y|^{n+2s}}\, dxdy,
	\end{equation*} for some~$C>0$, depending on~$n$, $s$ and~$\psi$.
	
	{F}rom this estimate, recalling the definition of~$\kappa_\epsilon$ in~\eqref{kappaepsilondef}, we deduce that
	\begin{equation} \label{eq::laplace_term_estimate}\begin{split}
			&	\kappa_\epsilon\epsilon^{2s}\left|  \int_{\Omega} (-\Delta)^su_\epsilon (\psi\cdot \nabla u_\epsilon)\, dx \right|
			\le C\kappa_\epsilon\epsilon^{2s}\int_{\Omega}\int_{\R^n} \frac{|u_\epsilon(x)-u_\epsilon(y)|^2}{|x-y|^{n+2s}}\, dxdy
			\\&\qquad		= C\kappa_\epsilon\epsilon^{2s}\K(u_\epsilon,\Omega)\le C\kappa_\epsilon\J_\epsilon(u_\epsilon,\Omega)
			= C \F_\epsilon(u_\epsilon,\Omega).
	\end{split}\end{equation}
	
	Moreover, the second term on the right-hand side in~\eqref{eq::var_eq} can be estimated as
	\begin{equation} \label{eq::W_term_estimate}
		\kappa_\epsilon\left|\int_{\Omega} W(u_\epsilon) \mbox{div}\psi\, dx\right| \leq C \F_\epsilon(u_\epsilon,\Omega).
	\end{equation}
	
	Besides, from~\eqref{eq::energy_unif_estimateBIS} we know that~$\F_\epsilon(u_\epsilon,\Omega)$ is bounded uniformly in~$\epsilon$. Hence, from~\eqref{eq::var_eq}, \eqref{eq::laplace_term_estimate} and~\eqref{eq::W_term_estimate}, we find that
	\begin{equation} \label{eq::mu_epsilon_bound2}
		|\mu_\epsilon| \left|\int_{\Omega} u_\epsilon\mbox{div}\psi\, dx\right| \leq  C.
	\end{equation}
	for some positive~$C$ independent of~$\epsilon$.
	
	We now claim that
	\begin{equation}\label{hgvjckxlz7648392hfjdks}
		\Per(E,\Omega)\in(0,+\infty].
	\end{equation} 
	Indeed, if~$\Per(E,\Omega)=0$, we deduce from the relative isoperimetric inequality (see e.g.~\cite[page~87]{MR1736268}) that
	$$ 0=\Per(E,\Omega)\ge C(n,\Omega) |E\cap \Omega|\; |\Omega\setminus E|,$$
	for some~$C(n,\Omega)>0$. 
	
	As a result, either~$|E\cap \Omega|=0$ or~$|\Omega\setminus E|=0$, and therefore
	\begin{eqnarray*}&& |\Omega|=\Big| |E\cap \Omega|-|\Omega\setminus E|\Big|=\left|\int_\Omega \chi_E-\chi_{E^c}dx\right|
		=\lim_{\epsilon\to0}\left|\int_\Omega u_\epsilon dx\right|=|m|,
	\end{eqnarray*} in contradiction with our assumption. This establishes~\eqref{hgvjckxlz7648392hfjdks}.
	
	Accordingly, by~\eqref{hgvjckxlz7648392hfjdks}, and recalling the definition of perimeter (see e.g.~\cite[page~22]{MR1736268}), there exists~$\psi\in\cont^\infty_c(\Omega,\R^n)$
	such that
	\begin{equation}\label{ghvjckxlaO32765U8I43}
		\int_E \mbox{div}\psi\,dx\ge\frac12 \min\{ \Per(E,\Omega), 1\} .\end{equation}
	
	Moreover, 
	\begin{eqnarray*}&&0=
		\int_{\partial\Omega} \psi\cdot\nu_\Omega\, dx= \int_\Omega \mbox{div}\psi\,dx=\int_E \mbox{div}\psi\,dx+\int_{E^c} \mbox{div}\psi\,dx,
	\end{eqnarray*}	and therefore
	$$ \int_{E^c} \mbox{div}\psi\,dx=-\int_E \mbox{div}\psi\,dx.$$
	As a consequence of this and~\eqref{ghvjckxlaO32765U8I43},
	$$ \int_{\Omega} u\, \mbox{div}\psi\, dx=\int_E \mbox{div}\psi\,dx-\int_{E^c} \mbox{div}\psi\,dx=2\int_E \mbox{div}\psi\,dx\ge  \min\{ \Per(E,\Omega), 1\}.$$
	{F}rom this fact, since~$u_\epsilon\to u=\chi_E-\chi_{E^c}$ in~$L^1(\Omega)$, we arrive at
	\begin{equation*} 
		\left|\int_{\Omega} u_\epsilon\mbox{div}\psi\, dx\right|  = \left|\int_{\Omega} u\, \mbox{div}\psi\, dx + o(1)\right| \ge \frac12  \min\{ \Per(E,\Omega), 1\}.
	\end{equation*}
	
	This and~\eqref{eq::mu_epsilon_bound2} entail that
	\begin{equation*}
		|\mu_\epsilon| \leq \frac{2C}{\displaystyle\min\{ \Per(E,\Omega), 1\}}.
	\end{equation*}
	Accordingly, there exists~$\mu\in\R$ such that~$\mu_\epsilon\to\mu$ (up to a subsequence) as~$\epsilon\to0$, and this concludes the proof of Theorem~\ref{th::multiplier_bound}. 
\end{proof}


\section[Proof of Theorem~\ref{th::energy_continuity}\\ {\footnotesize Energy limits under a mass constraint}]{Proof of Theorem~\ref{th::energy_continuity} - Energy limits under a mass constraint}
\label{sec::min_conv_prescribed_mass} 

In this section, we give the proof of Theorem~\ref{th::energy_continuity} concerning the
continuity of the energy functionals~$\G_\epsilon$ with respect to the~$L^1_{\loc}$-convergence of minimizers.

\begin{proof}[Proof of Theorem~\ref{th::energy_continuity}]	
	Thanks to Theorem~\ref{th::energy_uniform_estimate}, there exists~$C$ independent of~$\epsilon$ such that~$\F_\epsilon(u_\epsilon,\Omega) \leq C$,
	hence, by~\cite[Lemma~10]{MR3081641}, there exists~$E$ such that~$u_\epsilon\to \chi_E-\chi_{E^c}$ in~$L^1(\Omega)$ (up to a subsequence).
	
	Furthermore, thanks to Lemma~\ref{lemma::unif_bound_min}, we have that~$\sup_\epsilon\norm{u_\epsilon}_{L^\infty(\R^n)} \leq M$. Hence, we deduce from~\eqref{eq::u_def_outside_omega} and the Dominated Convergence Theorem that
	\begin{equation*}
		\lim_{\epsilon\to0}u_\epsilon(x)=  \frac{\displaystyle\int_\Omega \frac{\chi_E(y)-\chi_{E^c}(y)}{|x-y|^{n+2s}}dy }{\displaystyle\int_\Omega \frac{1}{|x-y|^{n+2s}}dy },\quad \text{ for a.e. } x\in\Omega^c.
	\end{equation*}
	
	Therefore, setting
	\begin{equation*}
		u:=
		\begin{cases}
			\chi_E-\chi_{E^c}\quad&\text{in }\Omega,\\
			\\
			\frac{\displaystyle\int_\Omega \frac{\chi_E(y)-\chi_{E^c}(y)}{|x-y|^{n+2s}}dy }{\displaystyle\int_\Omega \frac{1}{|x-y|^{n+2s}}dy }\quad&\text{in }\Omega^c,
		\end{cases}
	\end{equation*}
	we conclude that~$u_\epsilon\to {u}$ in~$L^1_{\loc}(\R^n)$ and that~$\|u\|_{L^\infty(\R^n)}\le1$. 
	
	Moreover,
	\begin{equation*}	
		\left|m-\int_{\Omega} {u}\,dx\right| 
		=	\lim_{\epsilon\to0} \left|\int_{\Omega} \big(
		u_\epsilon-{u}\big)\,dx\right|
		\leq \lim_{\epsilon\to0}\norm{u_\epsilon-u}_{L^1(\Omega)}=0,
	\end{equation*}
	from which we deduce that~${u}\in Z_{M,m}$.

	Now, by the lower-semicontinuity of~$\F_\epsilon$, we have that
	\begin{equation} \label{eq::liminf_prescribed_mass}
		\F({u},\Omega)\leq\liminf_{\epsilon\to0^+}\F_\epsilon(u_\epsilon,\Omega).
	\end{equation}
	
	Furthermore, as a consequence of Theorem~\ref{th::gamma_conv_mass_prescribed}-\eqref{item::limsup_mass_prescribed}, there exists a sequence~$\{w_\epsilon\}_\epsilon\subset Z_{M,m}$ such that 
	\begin{equation*}
		\F({u},\Omega)\geq\limsup_{\epsilon\to0^+}\F_\epsilon(w_\epsilon,\Omega).
	\end{equation*}
	By the minimality of~$u_\epsilon$, it thereby follows that
	\begin{equation*}
		\F({u},\Omega)\geq\limsup_{\epsilon\to0^+}\F_\epsilon(u_\epsilon,\Omega).
	\end{equation*}
	This and~\eqref{eq::liminf_prescribed_mass} entail that
	\begin{equation} \label{eq::F_continuity_prescribed_mass}
		\lim_{\epsilon\to0^+} \F_\epsilon(u_\epsilon,\Omega) = \F({u},\Omega) ,
	\end{equation} which is~\eqref{eq::F_convmass_prescribed}.
	
	We now check that~$u$ is a minimizer for~$\F$ under the mass constraint.	 For this, let~$v:=\chi_F-\chi_{F^c}\in Z_{M,m}$, for some~$F\subset\R^n$.
	Thanks to Theorem~\ref{th::gamma_conv_mass_prescribed}, there exists a sequence~$\{v_\epsilon\}_\epsilon\subset Z_{M,m}$ such that~$v_\epsilon\to v$ in~$L^1_{\loc}(\R^n)$ and
	\begin{equation*}
		\lim_{\epsilon\to0^+} \F_\epsilon(v_\epsilon,\Omega) = \F(v,\Omega) .
	\end{equation*}
	Therefore, using also the minimality of~$u_\epsilon$ and~\eqref{eq::F_continuity_prescribed_mass}, we infer that
	\begin{equation} \label{eq::u_tilde_min_F}
		\F({u},\Omega) = \lim_{\epsilon\to0^+}\F_\epsilon(u_\epsilon,\Omega) \leq \lim_{\epsilon\to0^+} \F_\epsilon(v_\epsilon,\Omega) = \F(v,\Omega) .
	\end{equation}
	Since~$\F(v,\Omega)=+\infty$ whenever~$v$ is not a signed characteristics function in~$\Omega$, inequality~\eqref{eq::u_tilde_min_F} yields that
	\begin{equation*}
		\F({u},\Omega) = \min_{v\in Z_{M,m}} \F(v,\Omega),
	\end{equation*} as desired.
	
	Finally, thanks to~\eqref{eq::F_convmass_prescribed} and Corollary~\ref{cor::H_conv}, we conclude that 
	\begin{equation*}
		\lim_{\epsilon\to0^+} \G_\epsilon(u_\epsilon,\Omega) = \G({u},\Omega) ,
	\end{equation*}
	which establishes~\eqref{eq::G_convmass_prescribed}, as well.
\end{proof}


\section[Proof of Theorem~\ref{th::millot_sire_wang}\\ {\footnotesize Convergence for solutions of inhomogeneous Allen-Cahn Equation}]{Proof of Theorem~\ref{th::millot_sire_wang} - Convergence for solutions of inhomogeneous Allen-Cahn Equation}
\label{sec::millot_sire_wang}

Here, we prove a version of~\cite[Theorem~5.1]{MR3900821} needed for our purposes, as stated in Theorem~\ref{th::millot_sire_wang}.

\begin{proof}[Proof of Theorem~\ref{th::millot_sire_wang}]
	{F}rom the assumptions on~$g_k$ and~$g$, we have that~$g\in L^\infty(\Omega^c)$. Then, there exists a sequence~$\{\tilde{g}_k\}_k\subseteq\cont^{0,1}_{\loc}(\R^n)$ such that~$\sup_k \|\tilde{g}_k\|_{L^\infty(\Omega^c)}<+\infty$, $\tilde{g}_k\to g$ in~$L^1_{\loc}(\Omega_\eta^c)$, and~$\tilde{g}_k=g_k$ in~$\Omega_\eta$. 
	
	Now, let us define~$\tilde{u}_k\in H^s(\Omega)\cap L^p(\Omega)$ as
	\begin{equation*}
		\tilde{u}_k:=
		\begin{cases}
			u_k\quad &\text{in }\Omega,\\
			\tilde{g}_k\quad &\text{in }\Omega^c,
		\end{cases}
	\end{equation*}
	and observe that~$W'(\tilde{u}_k)=W'(u_k)$ in~$\Omega$. 
	
	Notice also that, for all~$y\in\Omega_\eta\setminus\Omega$,
	\begin{equation*}
		u_k(y)={g}_k(y)=\tilde{g}_k(y)=\tilde{u}_k(y).
	\end{equation*}
	Therefore, omitting the principal value notation for the sake of readability, we see that, for all~$x\in\Omega$,
	\begin{equation*}
		\begin{split}
			(-\Delta)^s \tilde{u}_k(x) &=2 \int_{\R^n} \frac{\tilde{u}_k(x)-\tilde{u}_k(y)}{|x-y|^{n+2s}}dy\\&\
			=  2\int_{\R^n} \frac{u_k(x)-u_k(y)}{|x-y|^{n+2s}}dy + 2\int_{\Omega_\eta^c} \frac{u_k(y)-\tilde{u}_k(y)}{|x-y|^{n+2s}}dy\\
			&= (-\Delta)^su_k(x)+h_k(x),
		\end{split}
	\end{equation*}
	where
	\begin{equation*}
		h_k(x):=2\int_{\Omega_\eta^c} \frac{u_k(y)-\tilde{u}_k(y)}{|x-y|^{n+2s}}dy
		= 2\int_{\Omega_\eta^c} \frac{g_k(y)-\tilde{g}_k(y)}{|x-y|^{n+2s}}dy.
	\end{equation*}
	Notice that~$h_k\in \cont^{0,1}(\Omega)$ with
	\begin{equation}\label{tfghdsjkcnxzm5647567438fghdj}
		\sup_k\| h_k\|_{W^{1,\infty}(\Omega)}<+\infty.\end{equation}
	
	We claim that
	\begin{equation}\label{tfghdsjkcnxzm5647567438fghdj2}
		{\mbox{$h_k\rightharpoonup0$ in~$W^{1,q}(\Omega)$ as~$k\to+\infty$.}}
	\end{equation}
	For this, for~$x\in\Omega$, we observe that the functions~$\R^n\ni y\mapsto|x-y|^{-n-2s}\chi_{\Omega_\eta^c}(y)$ and~$\R^n\ni y\mapsto(x-y)|x-y|^{-n-2s-2}\chi_{\Omega_\eta^c}(y)$ belong to~$L^1(\R^n)$. Thus,
	by the Dominated Convergence Theorem, we infer that, for all~$x\in\Omega$, up to subsequences, 
	\begin{eqnarray*}&&\lim_{k\to+\infty} h_k(x)=
		\lim_{k\to+\infty}
		\int_{\Omega_\eta^c} \frac{ g_k(y)-\tilde{g}_k(y)}{  |x-y|^{n+2s}}dy=0
	\end{eqnarray*}
	and
	\begin{eqnarray*}&&\lim_{k\to+\infty} \nabla h_k(x)=
		-(n+2s)\lim_{k\to+\infty} \int_{\Omega_\eta^c} \frac{ \big(g_k(y)-\tilde{g}_k(y)\big)(x-y)}{  |x-y|^{n+2s+2}}dy=0.
	\end{eqnarray*}
	As a consequence, recalling~\eqref{tfghdsjkcnxzm5647567438fghdj} and making again use of the Dominated Convergence Theorem, we obtain~\eqref{tfghdsjkcnxzm5647567438fghdj2}.
	
	Now we observe that~$\tilde{u}_k$ weakly solves
	\begin{equation*}
		\begin{cases}
			(-\Delta)^sv+\epsilon_k^{-2s}W'(v)=f_k+h_k\quad &\text{in }\Omega,\\
			v=\tilde{g}_k\quad &\text{in }\Omega^c.
		\end{cases}
	\end{equation*}
	{F}rom the assumptions on the convergence of~$f_k$ and~\eqref{tfghdsjkcnxzm5647567438fghdj2}, we deduce that~$f_k+h_k\rightharpoonup  f$ in~$W^{1,q}(\Omega)$.
	
	Besides, by~\cite[Corollary~3.10]{MR3900821}, we have that 
	$$\sup_k \norm{\tilde{u}_k}_{L^\infty(\R^n)} \leq \max\left\{
	\left(1+c_W\sup_k \epsilon_k^{2s}\norm{f_k}_{L^\infty(\Omega)}\right)^{1/(p-1)}, \sup_k \norm{g_k}_{L^\infty(\R^n)} \right\}<+\infty,$$
	for some~$c_W>0$ depending only on~$W$.
	In particular, 
	$$\sup_k \norm{u_k}_{L^\infty(\Omega)}\leq \sup_k \norm{\tilde{u}_k}_{L^\infty(\Omega)} <+\infty.$$
	Thus, we infer that
	\begin{equation} \label{eq::seminorm_estimate}
		\begin{split}
			\K(\tilde{u}_k,\Omega) &= \frac{1}{2}\int_\Omega\int_\Omega \frac{|\tilde{u}_k(x)-\tilde{u}_k(y)|^2}{|x-y|^{n+2s}}dxdy + \int_\Omega\int_{\Omega^c} \frac{|\tilde{u}_k(x)-\tilde{u}_k(y)|^2}{|x-y|^{n+2s}}dxdy\\
			&= \frac{1}{2}\int_\Omega\int_\Omega \frac{|u_k(x)-u_k(y)|^2}{|x-y|^{n+2s}}dxdy + \int_\Omega\int_{\Omega^c} \frac{|u_k(x)-\tilde{u}_k(y)|^2}{|x-y|^{n+2s}}dxdy\\
			&=\K(u_k,\Omega)+ \int_\Omega\int_{\Omega_\eta^c} \frac{|u_k(x)-\tilde{u}_k(y)|^2-|u_k(x)-u_k(y)|^2}{|x-y|^{n+2s}}dxdy\\
			&\leq \K(u_k,\Omega)+  \int_\Omega\int_{\Omega_\eta^c} \frac{|\tilde{g}_k(y)-g_k(y)|\,|\tilde{g}_k(y)+g_k(y)-2u_k(x)|}{|x-y|^{n+2s}}dxdy\\
			&\leq \K(u_k,\Omega)+C,
		\end{split}
	\end{equation}
	for some positive~$C=C(\Omega,\eta)$. 
	
	As a result, the assumption~\eqref{mnbvcxz21345678jhgfdsurieowjdh}, together with~\eqref{eq::seminorm_estimate}, entails that~
	$$\sup_k\left(\F_{\epsilon_k}(\tilde{u}_k,\Omega)-\int_\Omega (f_k+h_k)\tilde{u}_kdx\right) <+\infty.$$

	Thus, we have that~$\tilde{u}_k$ satisfies the assumptions\footnote{We point out that, the assumption~$|g|=1$ almost everywhere in~$\Omega^c$ in~\cite[Theorem~5.1]{MR3900821} is in general unnecessary and it is used there only to obtain~$u=\chi_E-\chi_{E^c}$ in~$\R^n$, for some set~$E$, and hence~$\K(u,\Omega)=\Per_{s}(E,\Omega)$.} of~\cite[Theorem~5.1]{MR3900821}, and from the application of this result we deduce the existence of a (not relabeled) subsequence and a function~$u$ such that~$\tilde{u}_k\to u$ in~$L^1_{\loc}(\R^n)$, $u=\chi_E-\chi_{E^c}$ in~$\Omega$, for some set~$E\subseteq\R^n$, and
	\begin{equation*}
		\delta\K(u,\Omega)[X] = \kappa\int_{E\cap\Omega} \mbox{div}(fX)dx \quad\text{for every }X\in\cont^1_c(\Omega,\R^n), 
	\end{equation*}
	for some~$\kappa>0$ depending only on~$n$ and~$s$.
	
	Furthermore, since~$\tilde{u}_k=u_k$ in~$\Omega$ and~$g_k$, $\tilde{g}_k\to g$ in~$L^1_{\loc}(\Omega^c)$, we also have that~$u_k\to u$ in~$L^1_{\loc}(\R^n)$.
\end{proof}


\section[Proof of Theorem~\ref{th::conv_crit_points}\\ {\footnotesize Hybrid mean curvature of the phase interface under a mass constraint}]{Proof of Theorem~\ref{th::conv_crit_points} - Hybrid mean curvature of the phase interface under a mass constraint}
\label{sec::conv_crit_points}

Here, we prove that a non-local solution that presents a phase interface under a mass constraint has constant hybrid mean curvature, as claimed in Theorem~\ref{th::conv_crit_points}.  

\begin{proof}[Proof of Theorem~\ref{th::conv_crit_points}]
	Let~$K\comp\Omega$ and let~$\eta>0$ be such that
	$$ 	K_\eta := \{x\in\R^n \text{ s.t. } {\rm{dist}}(x,K)<\eta\}
	\subset\Omega.$$ 
	We point out that if~$u_\epsilon$ is a critical point of~$\G_\epsilon$ in~$\Omega$, then~$u_\epsilon$ is a weak solution of
	\begin{equation} \label{eq::weak_eq_u_epsilon}
		\begin{cases}
			(-\Delta)^sv+\epsilon^{-2s}W'(v)=-\mu_\epsilon \quad&\text{in }\Omega,\\
			v=u_\epsilon \quad&\text{in }\Omega^c.			
		\end{cases}
	\end{equation}
	Thus, by~\cite[Propositions~2.8 and~2.9]{silvestre_regularity},
	we have that~$u_\epsilon$ is smooth in~$\Omega$. 
	
	In particular, $u_\epsilon$ weakly solves
	\begin{equation} \label{eq::weak_eq_u_epsilon2}
		\begin{cases}
			(-\Delta)^sv+\epsilon^{-2s}W'(v)=-\mu_\epsilon \quad&\text{in }K,\\
			v=u_\epsilon \quad&\text{in }K^c.			
		\end{cases}
	\end{equation}
	Therefore, defining~$g_\epsilon:=u_\epsilon$ in~$K^c$, we have that~$g_\epsilon\in\cont^{0,1}(K_\eta)$ (recall that~$K_\eta\subset\Omega$) and that~$\sup_\epsilon\norm{g_\epsilon}_{L^\infty(\R^n)}\leq M$. Moreover, by Theorem~\ref{th::energy_continuity}, we get that~$g_\epsilon\to g$ in~$L^1_{\loc}(K^c)$ (up to a subsequence), for some function~$g$. 
	
	Also, setting~$f_\epsilon:=-\mu_\epsilon$, we obtain that~$f_\epsilon\in\cont^\infty(K)$ and it satisfies
	$$ \sup_k \left(\epsilon^{2s}\norm{f_\epsilon}_{L^\infty(K)}+\norm{f_\epsilon}_{W^{1,n/2}(K)}\right)\leq C_{n,\Omega}\sup_k\norm{f_\epsilon}_{L^\infty(\R^n)} <+\infty.$$
	Hence, thanks to Theorem~\ref{th::millot_sire_wang} and the uniqueness of the~$L^1_{\loc}$-limit, we conclude that there exists a function~$u$ such that~$u_\epsilon\to u$ in~$L^1_{\loc}(\R^n)$ (up to a subsequence), and~$u=\chi_E-\chi_{E^c}$ in~$\Omega$, for some set~$E$. Moreover,
	\begin{equation*}
		\delta\K(u,\Omega)[X] = \kappa\int_{E\cap\Omega} \mbox{div}(\mu X)dx \quad\text{for every }X\in\cont^1_c(\Omega,\R^n), 
	\end{equation*}
	for some~$\kappa>0$ depending only on~$n$ and~$s$.
\end{proof}

\appendixtitleon
\appendixtitletocon
\begin{appendices}
	
	\section{Non-existence of solutions of an over-determined Allen-Cahn equation} \label{append::over_determined}
	
	We show that a prescribed-mass Allen-Cahn equation with Dirichlet boundary conditions is an over-determined problem, where the existence of solutions is not guaranteed, even in dimension one. To do this, we exhibit an example of such an equation with no solutions, both in the classical and in the fractional case.
	
	\begin{ex}[A counter-example in the classical case]
		
		Let us consider the ordinary differential equation
		\begin{equation} \label{eq::broken_ode}
			\begin{cases}
				&u'' = u^3-u\quad\text{in }(-1,1),\\
				&u(-1)=u(1)=-1.
			\end{cases}
		\end{equation}
		We look for solutions of~\eqref{eq::broken_ode} satisfying the mass constraint
		\begin{equation}\label{eq::broken_mass_constraint}
			\int_{-1}^1 u\,dx = 0.
		\end{equation}
		Observe that a solution~$u$ of~\eqref{eq::broken_ode} is negative at~$\{-1,1\}$, but has average~$0$. Thus, we infer that~$u$ has a
		strictly positive maximum in~$(-1,1)$, i.e. there exists~$\overline{x}\in(-1,1)$ such that
		\begin{equation}\label{fesdrfyhtfg87654grhwdkbdsnmfdbsn}
			\mu := u(\overline{x}) = \max_{[-1,1]} u > 0.
		\end{equation} 
		Since~$\overline{x}$ is a maximum point for~$u$, we have that 
		\begin{equation*}
			0 \geq u''(\overline{x}) = u^3(\overline{x})-u(\overline{x}) = \mu^3-\mu=\mu(\mu^2-1),
		\end{equation*}
		which entails that~$\mu\in(0,1]$. 
		
		Now we observe that, for all~$x\in(-1,1)$,
		\begin{eqnarray*}
			\frac{d}{dx}\left(\frac12|u'(x)|^2-\frac14u^4(x)+\frac12u^2(x)\right)
			=u'(x)\Big( u''(x)-u^3(x)+u(x)\Big)=0.
		\end{eqnarray*}
		As a consequence,
		\begin{eqnarray*}&&-\frac{\mu^4}4+\frac{\mu^2}2=
			\frac12|u'(\overline x)|^2-\frac14u^4(\overline x)+\frac12u^2(\overline x)
			\\&&\qquad=\lim_{x\to1}\left(\frac12|u'(x)|^2-\frac14u^4(x)+\frac12u^2(x)\right)
			\ge \lim_{x\to1}\left( -\frac14u^4(x)+\frac12u^2(x)\right)=\frac14,
		\end{eqnarray*} namely
		$$ \mu^2(2-\mu^2)\ge1.$$
		This entails that~$\mu=1$ and
		$$ \lim_{x\to1}|u'(x)|^2=0,$$
		and therefore~$u'(1)=0$. 
		
		As a result, by the uniqueness of the Cauchy problem, we obtain that~$u$
		must be identically equal to~$-1$, in contradiction with~\eqref{fesdrfyhtfg87654grhwdkbdsnmfdbsn}.
	\end{ex}
	
	\begin{ex}[A counter-example in the non-local case]
		Let~$s\in(0,1)$ and let~$u_0$ be the unique non-trivial global minimizer of the functional~$J_1(\cdot,\Omega)$ (defined in~\eqref{eq::1D_functional}). According to~\cite[Theorem~2]{MR3081641}, we know that~$u_0:\R\to[-1,1]$ is a strictly increasing odd function.
		
		We consider the pseudo-differential problem
		\begin{equation} \label{eq::s_ode}
			\begin{cases}
				&(-\Delta)^su = u-u^3 \quad\text{in }(-1,1),\\
				&u=u_0\quad\text{in }\R\setminus (-1,1),
			\end{cases}
		\end{equation}
		and we claim that 
		\begin{equation} \label{clam::unique_sol}
			\mbox{$u_0$ is the unique solution of~\eqref{eq::s_ode}.}
		\end{equation}
			
			By symmetry, we also have that
			$$ \int_{-1}^{1}u_0\,dx=0.$$
			Therefore, we deduce that the over-determined problem~\eqref{eq::s_ode} subject to the mass constraint 
			\begin{equation}
				\int_{-1}^1 u\,dx = \eta,
			\end{equation}
			has no solutions, for any fixed~$\eta>0$. 
			
			To show~\eqref{clam::unique_sol}, we suppose by contradiction that 
			\begin{equation}\label{vbcnxieo3yr4378r23456789}
				{\mbox{there exists a continuous
						function~$v\not\equiv u_0$ which solves~\eqref{eq::s_ode}.}}
			\end{equation} 
			Then, by sliding horizontally $u_0$ and employing the maximum principle for pseudo-differential operators, we infer that $v$ must coincide with $u_0$, leading to a contradiction.
			
		With this strategy in mind, we first observe that 
		\begin{equation}\label{clam::unique_solBIS}
			{\mbox{$v< 1$ in $[-1,1]$.}}	
		\end{equation}
		Indeed, let~$\overline{x}$ be such that
		$$ v(\overline{x}) = \max_{[-1,1]}v .$$
		In particular, since~$u_0$ is increasing, we have that~$\overline{x}\neq -1$. Also, $v(\overline{x})\geq u_0(1)>0$. 
		
		Now, if~$\overline{x}=1$, then~$v(\overline{x})=u_0(\overline{x})< 1$, which implies~\eqref{clam::unique_solBIS}. 
		
		If instead~$\overline{x}\in(-1,1)$, then
		$$ 0 \leq \int_{\R} \frac{v(\overline{x})-v(y)}{|\overline{x}-y|^{1+2s}}dy = v(\overline{x})-v^3(\overline{x})
		=v(\overline{x})\big(1-v^2(\overline{x})\big),$$
		which reads~$v(\overline{x})\leq 1$. However, if~$v(\overline{x})=1$, then we would have
		$$ 0\le\int_{\R} \frac{1-v(y)}{|\overline{x}-y|^{1+2s}}dy = 0,$$
		which entails that~$v\equiv1$, against the fact that~$v=u_0$ in~$\R\setminus (-1,1)$. Hence, we conclude that~$v(\overline{x})<1$
		and~\eqref{clam::unique_solBIS} is established.
		
		Now, the idea is to slide~$u_0$ to the left until it lies completely above~$v$, and then slide back~$u_0$ until it touches~$v$ (from above) for the first time. To do this, we consider horizontal translations of~$u_0$, defined as~$(u_0)_t(x):=u_0(x+t)$, with~$t\in\R$. In light of~\eqref{clam::unique_solBIS}, there exists~$T>0$ such that~$(u_0)_T>v$. Then, we define
		\begin{equation}\label{qwe0wdufjovl}
			\tau := \inf\big\{\theta \mbox{ s.t. } u_0(x+\theta)>v(x),\mbox{ for all }x\in\R\big\}\geq0.
		\end{equation}
		We observe that, by construction,
		\begin{equation}\label{vbncmxeuwiruwrty75993}
			{\mbox{$(u_0)_{\tau}(x)\ge v(x)$ for all~$x\in\R$.}}
		\end{equation}
		
		We claim that
		\begin{equation}\label{wofejvbn9-234500}\tau=0.\end{equation}
		For this, we argue by contradiction and we suppose that
		\begin{equation}\label{wofejvbn9-2345} \tau>0.\end{equation}
		
		We point out that
		\begin{equation}\label{qwe0wdufjovl2}
			{\mbox{there exists~$x_\tau\in[-1,1]$ such that~$(u_0)_{\tau}(x_\tau)=v(x_\tau)$.}}
		\end{equation}
		Indeed, if not, by~\eqref{vbncmxeuwiruwrty75993} we have that
		$$\min_{[-1,1]}\big((u_0)_{\tau}-v\big)>0.$$
		Hence, by continuity and in view of~\eqref{wofejvbn9-2345}, there exists~$\tau_\star\in(0,\tau)$ such that
		\begin{equation} \label{vbncmxeuwiruwrty759932}
			\mbox{$(u_0)_{\tau_\star}-v>0$ in~$[-1,1]$.}
		\end{equation}
		
		Then, by the monotonicity of~$u_0$, we have that, for all~$x\in\R\setminus(-1,1)$,
		\begin{equation*} (u_0)_{\tau_\star}(x)=u_0(x+\tau_\star)>u_0(x)=v(x). \end{equation*}
		Combining this with~\eqref{vbncmxeuwiruwrty759932}, we see that~$(u_0)_{\tau_\star}-v>0$ in the whole of~$\R$. This violates
		the definition in~\eqref{qwe0wdufjovl} and proves~\eqref{qwe0wdufjovl2}.
		
		Besides, notice that, since~$u_0$ satisfies 
		$$ (-\Delta)^s u_0 = u_0-u_0^3\quad\mbox{in }\R,$$
		then for every~$t\in\R$ and for every~$x\in\R$ we have that
		\begin{equation*}
			\begin{split}
				&	(-\Delta)^s (u_0)_t (x) = \int_{\R} \frac{(u_0)_t(x)-(u_0)_t(y)}{|x-y|^{1+2s}}\,dy = \int_{\R} \frac{u_0(x+t)-u_0(y)}{|(x+t)-y|^{1+2s}}\,dy \\
				&\qquad\qquad= u_0(x+t)-u_0^3(x+t).
			\end{split}
		\end{equation*}
		
		Thus, recalling~\eqref{qwe0wdufjovl2},
		\begin{equation*}
			\begin{split}
				&\int_{\R} \frac{(u_0)_\tau(x_\tau)-v(y)}{|x_\tau-y|^{1+2s}}\,dy = \int_{\R} \frac{v(x_\tau)-v(y)}{|x_\tau-y|^{1+2s}}\,dy=v(x_\tau)-v^3(x_\tau)\\
				&\qquad\qquad = (u_0)_\tau(x_\tau)-(u_0)_\tau^3(x_\tau)
				=\int_{\R} \frac{(u_0)_\tau(x_\tau)-(u_0)_\tau(y)}{|x_\tau-y|^{1+2s}}\,dy.
			\end{split}
		\end{equation*}
		{F}rom this and~\eqref{vbncmxeuwiruwrty75993}, we obtain that				 
		\begin{equation*}
			0\leq \int_{\R} \frac{(u_0)_\tau(y)-v(y)}{|x_\tau-y|^{1+2s}}\,dy = 0,
		\end{equation*}
		which implies that~$v=(u_0)_\tau$ almost everywhere. However, this is against the fact that~$v=u_0$ in~$\R\setminus (-1,1)$.
		This contradiction proves~\eqref{wofejvbn9-234500}.
		
		{F}rom~\eqref{vbncmxeuwiruwrty75993} and~\eqref{wofejvbn9-234500}, we thereby find that~$u_0\geq v$ in~$\R$.
		
		In a similar manner, one can show that~$u_0\leq v$ in~$\R$, and hence~$v=u_0$ in~$\R$.
		This is a contradiction with~\eqref{vbcnxieo3yr4378r23456789}, and
		therefore~\eqref{clam::unique_sol} holds true, as claimed.		
		
	\end{ex}
	
	
	\section{Extension of $\Gamma$-convergence results to $X_M$} \label{append::X_M}
		
		For the convenience of the reader, in this section, we provide a self-contained justification for extending the results of~\cite{savin_valdinoci_gamma_conv} to the space $X_M$, as claimed in Remark~\ref{rem::X_M}.
		
		We remark that the results presented in~\cite{savin_valdinoci_gamma_conv} are stated and proved for functionals~$\F_\epsilon$ and~$\F$ defined on the space 
		$$X := \{u\in L^1_{\loc}(\R^n) \mbox{ s.t. }\norm{u}_{L^\infty(\R^n)}\leq 1\}.$$
		Recalling~\eqref{eq::def_X_M}, we observe that $X\subseteq X_M$, for every $M\geq1$.
		
		The two main results we are concerned about are the following:
		\begin{theorem} \label{th::gamma_conv_SV}
			Let~$s\in(0,1)$. Then, the sequence of functionals~$\F_\epsilon$ $\Gamma$-converges to~$\F$, i.e. for any~$u\in X_M$ we have
			\begin{enumerate}[(i)]
				\item for any sequence~$\{u_\epsilon\}_\epsilon\subset X_M$ converging to~$u$ in~$L^1_{\loc}(\R^n)$,
				$$ \F(u,\Omega) \leq \liminf_{\epsilon\to0^+} \F_\epsilon(u_\epsilon,\Omega) ,$$
				\item there exists a sequence~$\{u_\epsilon\}_\epsilon\subset X_M$ converging to~$u$ in~$L^1_{\loc}(\R^n)$ such that 
				$$ \F(u,\Omega) \geq \limsup_{\epsilon\to0^+} \F_\epsilon(u_\epsilon,\Omega) .$$
			\end{enumerate}
		\end{theorem}
		
		\begin{theorem} \label{th::min_conv_SV}
			If $\F_\epsilon(u_\epsilon,\Omega)$ is uniformly bounded for a sequence of $\epsilon\to0^+$, then there exist a convergent subsequence of $\{u_\epsilon\}_\epsilon$ and a set $E$ such that
			\begin{equation} \label{eq::compactness_SV}
				u_\epsilon\to u_\star:=\chi_E-\chi_{E^c}\quad\mbox{in }L^1(\Omega).
			\end{equation}
			
			Moreover, if~$u_\epsilon$ minimizes~$\F_\epsilon$ in~$\Omega$, then
			\begin{enumerate}[(i)]
				\item if~$s\in(0,1/2)$ and~$u_\epsilon$ is weak* convergent to some~$u_0$ in~$L^\infty(\Omega^c)$, then~$u_\star$ minimizes~$\F$ among all the functions that coincide with~$u_0$ in~$\Omega^c$;
				\item if~$s\in[1/2,1)$, then~$u$ minimizes~$\E$. Also, for any~$K\comp\Omega$, it holds
				\begin{equation} \label{eq::local_conv_SV}
					\limsup_{\epsilon\to0^+} \F_\epsilon(u_\epsilon,K) \leq c_\star\Per(E,\overline{K}) ,
				\end{equation}
				where~$c_\star=c_\star(n,s,W)$ is as in~\eqref{eq::F}.
			\end{enumerate}
		\end{theorem}
		
		We observe that
		Theorems~\ref{th::gamma_conv_SV}
		and~\ref{th::min_conv_SV}
		here correspond, respectively, to
		Theorems~1.2 and~1.3 in~\cite{savin_valdinoci_gamma_conv}
		when~$M=1$
		and we now briefly discuss the modifications needed in our setting.
		
		First, observe that if $\{u_\epsilon\}_\epsilon$ is a sequence in $X_M$, we still have a uniform upper bound for $\norm{u_\epsilon}_{L^\infty(\Omega)}$. Indeed, recall that $v\in X_M$ yields $|v|\leq M$ almost everywhere in $\Omega$, and not necessarily in the whole of $\R^n$.
		
		Then, by inspection of the proofs in~\cite{savin_valdinoci_gamma_conv}, we notice that the argument to show Theorem~\ref{th::gamma_conv_SV}-(i) remains valid without requiring $\{u_\epsilon\}_\epsilon$ to be uniformly bounded in~$\Omega^c$ and extends to any space $X_M$, with $M\geq1$, with no modifications. Additionally, since $X\subseteq X_M$, Theorem~\ref{th::gamma_conv_SV}-(ii) also follows immediately for every $X_M$ by choosing the same recovery sequences.
		
		On the other hand, the proof of Theorem~\ref{th::min_conv_SV} for $X_M$ is slightly more delicate and requires minor adjustments. In particular, a global uniform bound for $u_\epsilon$ is needed to prove Theorem~\ref{th::min_conv_SV}. This is ensured by Lemma~\ref{lemma::unif_bound_min}, which establishes that $\norm{u_\epsilon}_{L^\infty(\R^n)}\leq M$ whenever $u_\epsilon$ is a minimizer for $\F_\epsilon$ in~$X_M$, as assumed in Theorem~\ref{th::min_conv_SV}.
		The other main tool to show Theorem~\ref{th::min_conv_SV} is the following auxiliary result, which yields \eqref{eq::compactness_SV}.
		
		\begin{prop}\label{prop::bound_conv}
			Let $\Omega$ be an open, bounded subset of $\R^n$ and $u_\epsilon\in X_M$, with $\epsilon>0$. If
			$$ \liminf_{\epsilon\to0^+} \F_\epsilon(u_\epsilon,\Omega)\leq C_0<+\infty,$$
			for some constant $C_0$, then there exist a subsequence of $\{u_\epsilon\}_\epsilon$ and a set $E\subseteq \R^n$ such that 
			$$u_\epsilon\to\chi_E-\chi_{E^c}\quad\mbox{in }L^1(\Omega)$$		
			and
			$$ \Per(E,\Omega)<+\infty.$$
		\end{prop} 
		
		This result reduces to~{\cite[Proposition 3.3]{savin_valdinoci_gamma_conv}} when~$M=1$ and, once established,
		it would lead to Theorem~\ref{th::min_conv_SV}.
		
		Thus, to obtain Proposition~\ref{prop::bound_conv} in the generality needed here,
		we point out that, after replacing the estimate on display between Equations (3.8) and (3.9) in \cite{savin_valdinoci_gamma_conv} with
		\begin{equation*}
			\left|\{\left|1-|u_\epsilon|\right|\geq\sigma\}\right| \leq C(\sigma)\int_{\Omega} W(u_\epsilon)\,dx \leq C(\sigma,C_0)\epsilon^{1/2}\leq\frac{\delta}{8} ,
		\end{equation*}
		the argument proceeds without further changes.
		
\end{appendices}		

\section*{Acknowledgments} \label{sec::acknowledgments}	
\addcontentsline{toc}{section}{\nameref{sec::acknowledgments}}
Serena Dipierro and Enrico Valdinoci are members of the Australian Mathematical Society (AustMS).
Serena Dipierro and Riccardo Villa are supported by the Australian Research Council Future Fellowship FT230100333 ``New perspectives on nonlocal equations''.
Enrico Valdinoci is supported by the Australian Laureate Fellowship FL190100081 ``Minimal surfaces, free boundaries and partial differential equations''.
Riccardo Villa is supported by a Scholarship for International Research Fees at the University of Western Australia.

\addcontentsline{toc}{section}{References}
\nocite{*}
\printbibliography

\end{document}